\newtheorem{theorem}{Theorem}
\newtheorem{thm}{Theorem}[section]
\newtheorem{prop}[thm]{Proposition}
\newtheorem{lemma}[thm]{Lemma}
\theoremstyle{remark}
\newtheorem{remark}[thm]{Remark}
\theoremstyle{remark}
\theoremstyle{definition}
\newtheorem{definition}[thm]{Definition}
\numberwithin{equation}{section}
\newcommand{\dsp}{\displaystyle}
\newcommand{\GammaRel}{\Gamma_\mathrm{rel}}
\newcommand{\jump}[1]{\left\llbracket{#1}\right\rrbracket}
\newcommand{\pushright}[1]{\ifmeasuring@#1\else\omit\hfill$\displaystyle#1$\fi\ignorespaces}
\newcommand{\solnSpaceEuler}{\mathscr{S}}
\newcommand{\solnSpace}{\mathscr{S'}}
\newcommand\ackname{Acknowledgements}
\newenvironment{acknowledgements}{%
	\titlepage
	\null\vfil
	\@beginparpenalty\@lowpenalty
	\begin{center}%
		\bfseries \ackname
		\@endparpenalty\@M
\end{center}}%
{\par\vfil\null\endtitlepage}
\def\XXint#1#2#3{{\setbox0=\hbox{$#1{#2#3}{\int}$ }
	\vcenter{\hbox{$#2#3$ }}\kern-.6\wd0}}
\subjclass[2010]{35J25, 35B45, 76B03, 76B45}
\keywords{Wentzell condition, transmision condition, wind wave, surface tension}
\begin{document}
\title[Elliptic Theory and Wind Waves]{Elliptic Equations with Transmission and Wentzell Boundary Conditions and an Application to Steady Water Waves in the Presence of Wind}

\author{Hung Le}
\address{Department of Mathematics, University of Missouri, Columbia, MO 65211}
\email{hdlgw3@mail.missouri.edu}

\begin{abstract}
	In this paper, we present results about the existence and uniqueness of solutions of elliptic equations with transmission and Wentzell boundary conditions. We provide Schauder estimates and existence results in H\"older spaces. As an application, we develop an existence theory for small-amplitude two-dimensional traveling waves in an air-water system with surface tension. The water region is assumed to be irrotational and of finite depth, and we permit a general distribution of vorticity in the atmosphere.
\end{abstract}

\maketitle

\section{Introduction}

\subsection{Elliptic theory}
Let $\Omega \subset \mathbb{R}^n$ be a connected bounded $C^{2, \beta}$ domain for $n > 1$ and $\beta \in (0, 1)$. Suppose that there exists a $C^{2, \beta}$ hypersurface $\Gamma$ that divides $\Omega$ into two connected regions such that 
\begin{align*}
	\Omega = \Omega_1 \cup \Gamma \cup \Omega_2, \qquad 
	\Omega_1 \cap \Omega_2 = \emptyset, \qquad
	\partial\Omega_1 \cap \partial\Omega_2 = \Gamma,
\end{align*}
and denote by $S := \partial\Omega$. Let $\nu = (\nu_1, \dots, \nu_n)$ be the normal vector field on the interface $\Gamma$ pointing outward from $\Omega_1$. We define the co-normal derivative operator on $\Gamma$
\[
	\partial_N := \sum_{i,j=1}^n a^{ij} \nu_i \partial_{x_j},
\]
and the tangential differential operator along $\Gamma$
\[
	\mathcal{D}_s := \sum_{t = 1}^n w^{st} \partial_{x_t}, \quad 1 \le s \le n,
\]
where $w := I_n - \nu \otimes \nu$, and $I_n$ is the $n \times n$ identity matrix.

Our main object of study is the following transmission problem with a Wentzell boundary condition
\begin{equation} \label{Main Eqn} \begin{dcases}
	Lu & = \quad f \quad \text{in } \Omega, \\
	u & = \quad 0 \quad \text{on } S, \\
	\jump{u} & = \quad 0 \quad \text{on } \Gamma, \\
	Bu & = \quad g \quad \text{on } \Gamma,
\end{dcases} \end{equation}
where
\begin{align} \label{L operator}
	Lu & := -\sum_{i,j=1}^n \partial_{x_i}(a^{ij}(x)\partial_{x_j}u) + \sum_{i=1}^n b^i(x)\partial_{x_i}u + c(x)u,
\end{align}
\begin{align} \label{B operator}
	Bu & := - \sum_{s,t=1}^{n} \mathcal{D}_s\big(\mathfrak{a}^{st}(x)\mathcal{D}_t u \big) + \alpha \jump{\partial_N u} + \sum_{s=1}^n \mathfrak{b}^s(x) \mathcal{D}_s u +  \mathfrak{c}(x) u, \quad \alpha = \pm 1.
\end{align}
Here, we are using $\jump{\cdot} := (\cdot)|_{\Omega_1} - (\cdot)|_{\Omega_2}$ to denote the jump operator across $\Gamma$.  We think of $\alpha = +1$ as favorable and $\alpha = -1$ as unfavorable.  We shall assume uniform ellipticity condition on the operators $L$ and $B$; that is, there exist constants $\lambda, \mu > 0$ such that
\begin{align}
	\lambda|\xi|^2 \leq \sum_{i,j=1}^n a^{ij}(x)\xi_i\xi_j \quad \text{for all } x\in\Omega, \xi\in\mathbb{R}^n,
	\label{elliptic cond}
\end{align}
and
\begin{align}
	\mu|\xi|^2 \leq \sum_{s,t=1}^{n} \mathfrak{a}^{st}(x)\xi_s\xi_t  \quad \text{for all } x \in \Gamma \text{ and } \xi \in \mathbb{R}^n \text{ such that } \xi \cdot \nu(x) = 0.
	\label{ellipticity bc}
\end{align}
The coefficients $a^{ij}$ and $\mathfrak{a}^{st}$ satisfy $a^{ij}=a^{ji}, \mathfrak{a}^{st} = \mathfrak{a}^{ts}$ for all $i,j,s,t = 1,\dots,n$. We also assume that $a^{ij}, b^i, c$ are in $L^\infty(\Omega)$, and $\mathfrak{a}^{st}, \mathfrak{b}^s, \mathfrak{c}$ are in $L^\infty(\Gamma)$.

Note that $B$ contains second-order tangential derivatives of $u$. This is characteristic of so-called \emph{Wentzell-type} boundary conditions, whose study was initiated by Wentzell in \cite{Ventcel1959}. They arise, for example, in stochastic equations \cite{Ikeda_Watanabe1989} or as an asymptotic model for roughness of the boundary or other more complex geometrical effects \cite{Bonnaillie_Dambrine2010}. They also appear in water waves and continuum mechanics, which is our principal interest here. For instance, the Young--Laplace Law states that at the interface between two immiscible fluids, the pressure experiences a jump proportional to the curvature. In a free boundary problem where the interface is given as the graph of an unknown function, this naturally leads to quasilinear versions of Wentzell-type conditions. More generally, the curvature of a hyperplane is the first variation of its surface area. Thus, these types of conditions are frequently encountered in free boundary problems where the shape of the interface contributes to the energy.

\emph{Transmission conditions} refers to the jump operator in $B$. They are commonly found in multiphase problems, where physically their purpose is to enforce continuity of the normal stress across a material interface. Many researchers alternatively call these \emph{diffraction problems} (see, for example, \cite{Ladyzhenskaya1968, Arkhipova_Erlhamahmy2002}).

We first have an a priori estimate for classical solutions in H\"older spaces.

\begin{thm}[Schauder estimate] \label{Theorem A priori estimate}
Assume that $a^{ij} \in C^{1,\beta}(\overline{\Omega_1}) \cap C^{1,\beta}(\overline{\Omega_2})$, $b^i, c \in C^{0,\beta}(\overline{\Omega_1}) \cap C^{0,\beta}(\overline{\Omega_2})$, and $\mathfrak{a}^{st} \in C^{1,\beta}(\Gamma)$, $\mathfrak{b}^s, \mathfrak{c} \in C^{0,\beta}(\Gamma)$; and suppose that
\begin{align*}
	\|a^{ij}\|_{C^{1,\beta}(\Omega_k)}, \|b^i\|_{C^{0,\beta}(\Omega_k)}, \|c\|_{C^{0,\beta}(\Omega_k)}, \|\mathfrak{a}^{st}\|_{C^{1,\beta}(\Gamma)}, \|\mathfrak{b}^s\|_{C^{0, \beta}(\Gamma)}, \|\mathfrak{c}\|_{C^{0,\beta}(\Gamma)} &< \Lambda_2
\end{align*}
for some constant $\Lambda_2 > 0$, for all $i,j,s,t = 1, \dots, n$, and $k = 1, 2$. Suppose that $u \in C^0(\overline{\Omega}) \cap C^{2,\beta}(\overline{\Omega_1}) \cap C^{2,\beta}(\overline\Omega_2)$ solves equation \eqref{Main Eqn} with $\alpha = \pm 1$.  Then for any $\Omega' \subset\subset \overline\Omega \backslash (\Gamma \cap S)$, if $f \in C^{0,\beta}(\overline{\Omega_1}) \cap C^{0,\beta}(\overline{\Omega_2})$ and $g \in C^{0,\beta}(\Gamma)$, the following estimate holds:
\begin{align} \label{Estimate Theorem A priori Holder}
	\|u_1\|_{C^{2,\beta}(\Omega'_1)} + &\|u_2\|_{C^{2,\beta}(\Omega'_2)} \\
	&\le C \left(\|u\|_{C^0(\Omega)} + \|f\|_{C^{0,\beta}(\Omega_1)} + \|f\|_{C^{0,\beta}(\Omega_2)} + \|g\|_{C^{0,\beta}(\Gamma)} \right) \nonumber
\end{align}
for some constant $C=C(n,\beta,\Lambda_2,\lambda,\mu,\Omega'_1,\Omega'_2) > 0$, where $\Omega'_k := \Omega' \cap \Omega_k$ and $u_k := u|_{\Omega_k}$.
\end{thm}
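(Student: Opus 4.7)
The plan is to use the classical Schauder machinery by localization: cover $\overline{\Omega'}$ with a finite partition of unity whose supports fall into three types --- fully interior to some $\Omega_k$, intersecting $S \setminus \Gamma$ but not $\Gamma$, or intersecting $\Gamma \setminus S$ but not $S$. The hypothesis $\Omega' \subset\subset \overline{\Omega} \setminus (\Gamma \cap S)$ lets me avoid the corner set, so these three cases suffice. The first two types are handled by the standard interior Schauder estimate for $L$ on $\Omega_k$ and the standard boundary Schauder estimate with Dirichlet data on $S$; both are classical (Gilbarg--Trudinger). The new content is the interface estimate near a point $x_0 \in \Gamma \setminus S$, where $\|u_1\|_{C^{2,\beta}}$ and $\|u_2\|_{C^{2,\beta}}$ must be controlled simultaneously through both interface conditions $\jump{u} = 0$ and $Bu = g$.

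Near such $x_0$, I would use the $C^{2,\beta}$ regularity of $\Gamma$ to construct a diffeomorphism $\Phi$ that flattens $\Gamma$ to a piece of the hyperplane $\{x_n = 0\}$, so that on the pulled-back half-ball configuration the system still has the form \eqref{Main Eqn} with coefficients in the appropriate H\"older classes, and with ellipticity constants depending only on $\lambda$, $\mu$, and $\Phi$. In this straightened setting, the problem decouples tangentially and becomes a transmission/Wentzell system on $(u_1,u_2)$ across $\{x_n = 0\}$, with $\partial_N = \partial_{x_n}$ (after a diagonalizing change of variables) and $\mathcal{D}_s = \partial_{x_s}$ for $s < n$.

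The heart of the argument is then the estimate for the frozen-coefficient model problem: $L_0 u_k = f_k$ in each half-ball, together with $u_1 = u_2$ and $-\mathfrak{a}_0^{st}\partial_{x_s}\partial_{x_t} u + \alpha \jump{\partial_{x_n} u} = g$ on $\{x_n = 0\}$. This model problem is linear with constant coefficients on a half-space, and can be attacked either by taking the tangential Fourier transform and reading off the explicit Poisson-type representation, or by verifying the Lopatinskii--Shapiro complementing condition and invoking Agmon--Douglis--Nirenberg theory. Either route yields a H\"older estimate of the desired type for the model. A standard freezing-and-perturbation bootstrap on small half-balls, combined with interpolation inequalities between $C^0$ and $C^{2,\beta}$ to absorb lower-order contributions, then upgrades this to the variable-coefficient case on the flattened neighborhood. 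Summing the local estimates against the partition of unity yields \eqref{Estimate Theorem A priori Holder}.

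The principal obstacle is that $B$ is itself a second-order elliptic operator on $\Gamma$, of the same order as $L$, so one cannot treat $Bu = g$ as a lower-order boundary condition attached to a scalar elliptic equation. Instead \eqref{Main Eqn} must be viewed as a $2 \times 2$ properly elliptic system in $(u_1, u_2)$ with the transmission and Wentzell conditions playing the role of coupled boundary operators, and the nontrivial algebraic point is to check the complementing condition for both signs $\alpha = \pm 1$. This sign-independence is precisely what justifies the blanket ``$\alpha = \pm 1$'' in the a priori estimate, even though the sign will dictate solvability and uniqueness later.
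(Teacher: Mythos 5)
Your strategy is viable but genuinely different from the paper's. You treat \eqref{Main Eqn} as a $2\times 2$ properly elliptic system for $(u_1,u_2)$ and propose to verify the Lopatinskii--Shapiro complementing condition for the frozen-coefficient model, then perturb. The paper instead exploits the structure more directly: since $\jump{u}=0$, the trace $u|_\Gamma$ is a single function satisfying a second-order \emph{scalar} elliptic equation on $\Gamma$ (the Wentzell condition, after flattening), in which the transmission term $\alpha\jump{\partial_N u}$ appears only as a first-order forcing. One then applies the $(n-1)$-dimensional interior Schauder estimate of Gilbarg--Trudinger on $\Gamma$ to control $\|u_k\|_{C^{2,\beta}(\Gamma')}$ by $\|g\|_{C^{0,\beta}(\Gamma)}$ plus $C^{1,\beta}$ norms of $u_1,u_2$, feeds $u|_\Gamma$ as Dirichlet data into the standard global Schauder estimate on each $\Omega_k'$, and absorbs the intermediate $C^{1,\beta}$ terms by interpolation between $C^0$ and $C^{2,\beta}$. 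This buys two things: it uses only scalar Schauder theory (no ADN systems machinery), and it makes the independence of the sign $\alpha=\pm1$ completely transparent, since the jump term never enters at principal order.

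One caution about your write-up: you defer the ``nontrivial algebraic point'' of checking the complementing condition for both signs, but that check is precisely where the content lies, and your framing suggests you expect the sign of $\alpha$ to enter it. It does not: in the Agmon--Douglis--Nirenberg weighting the Wentzell condition has boundary order $2$ while $\jump{\partial_N u}$ has order $1$, so the transmission term is subprincipal and drops out of the complementing condition entirely; what remains is just $\mathfrak{a}^{st}\xi_s\xi_t\,\hat u(\xi')=0$, which forces $\hat u=0$ by \eqref{ellipticity bc}. If you carry out your route, you should make this weight assignment explicit; without it, a naive symbol computation that retains the first-order jump term would spuriously suggest a sign-dependent degeneracy at a particular frequency. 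As written, your proposal is a correct plan with this key verification left open, whereas the paper's argument sidesteps it altogether.
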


Note that the estimates in the theorem hold on subsets that are positively separated from $\Gamma \cap S$ where the boundary may not be smooth. The next theorems will assume that $\Omega \subset \mathbb{T}^{n-1} \times \mathbb{R}$, where $\mathbb{T}$ is a torus and $S \cap \Gamma = \emptyset$.

\begin{thm}[Existence and uniqueness of solutions in $C^{2,\beta}$]
\label{Theorem Existence and uniqueness of C^(2,beta) with c >= 0}
Suppose the coefficients $a^{ij}, b^i, c, \mathfrak{a}^{st}, \mathfrak{b}^s, \mathfrak{c}$ exhibit the same regularity as in Theorem \ref{Theorem A priori estimate} and assume in addition that $c \ge 0$ and $\mathfrak{c} > 0$. Then for all $f \in C^{0,\beta}(\overline{\Omega_1}) \cap C^{0,\beta}(\overline{\Omega_2})$ and $g \in C^{0,\beta}(\Gamma)$, the problem \eqref{Main Eqn} with $\alpha=1$ has a unique $C^{0,\beta}(\overline{\Omega}) \cap C^{2,\beta}(\overline{\Omega_1}) \cap C^{2,\beta}(\overline{\Omega_2})$ solution.  
\end{thm}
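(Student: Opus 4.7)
The plan is to combine a global maximum principle (which yields both uniqueness and an a priori $C^0$ bound), the Schauder estimate of Theorem~\ref{Theorem A priori estimate} (upgrading the $C^0$ bound to $C^{2,\beta}$), and the method of continuity (reducing existence to a tractable base case). For the maximum principle I claim that any solution of \eqref{Main Eqn} satisfies $\|u\|_{C^0(\overline\Omega)} \le C(\|f\|_{L^\infty(\Omega)} + \|g\|_{L^\infty(\Gamma)})$. Suppose $u$ attains a positive maximum. The weak maximum principle for $L$ with $c\ge 0$, applied in each subdomain $\Omega_k$ together with $u|_S=0$ (and, after subtracting an appropriate barrier to absorb the $f$ contribution), reduces to the case of a max at some $x^*\in\Gamma$. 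At such a point, the first-order tangential derivatives of $u$ vanish and the tangential Hessian is negative semidefinite, so by ellipticity of $\mathfrak{a}^{st}$ the principal part of $Bu$ at $x^*$ is $\ge 0$; moreover, the Hopf lemma applied separately in $\Omega_1$ and in $\Omega_2$ forces $\partial_N u|_{\Omega_1}(x^*)\ge 0$ and $\partial_N u|_{\Omega_2}(x^*)\le 0$, so $\jump{\partial_N u}(x^*)\ge 0$. With $\alpha=+1$ and $\mathfrak{c}(x^*)>0$, the Wentzell condition then forces $g(x^*)\ge \mathfrak{c}(x^*)u(x^*)$, giving the desired bound on $\sup u$; a symmetric argument controls $\inf u$. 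Applied to the difference of two solutions, this argument yields uniqueness. Since $\Gamma\cap S=\emptyset$, Theorem~\ref{Theorem A priori estimate} applies with $\Omega'=\Omega$, and combined with the $C^0$ bound it produces the full a priori $C^{2,\beta}$ estimate.

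For existence I use the method of continuity. Interpolate the coefficients: for $t\in[0,1]$ set $a_t^{ij}:=(1-t)\delta^{ij}+t\,a^{ij}$, $\mathfrak{a}_t^{st}:=(1-t)w^{st}+t\,\mathfrak{a}^{st}$, $\mathfrak{c}_t:=(1-t)+t\,\mathfrak{c}$, and scale the lower-order coefficients $b^i,c,\mathfrak{b}^s$ by $t$. The hypotheses (uniform ellipticity, coefficient regularity, $c_t\ge 0$, $\mathfrak{c}_t>0$) are preserved uniformly in $t$, so the a priori estimate is also uniform in $t$. On the Banach spaces
$$\mathcal{X}:=\{u\in C^{0,\beta}(\overline\Omega)\cap C^{2,\beta}(\overline{\Omega_1})\cap C^{2,\beta}(\overline{\Omega_2}):u|_S=0\}, \quad \mathcal{Y}:=\bigl(C^{0,\beta}(\overline{\Omega_1})\cap C^{0,\beta}(\overline{\Omega_2})\bigr)\times C^{0,\beta}(\Gamma),$$
the continuous family $\mathcal{T}_tu:=(L_tu,B_tu)$ then satisfies $\|u\|_\mathcal{X}\le C\|\mathcal{T}_tu\|_\mathcal{Y}$ with $C$ independent of $t$, so the set of $t$ for which $\mathcal{T}_t$ is bijective is both open and closed in $[0,1]$. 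It therefore suffices to solve the $t=0$ problem, which is essentially $-\Delta u=f$ in $\Omega$ with $u|_S=0$ and $-\Delta_\Gamma u+\jump{\partial_\nu u}+u=g$ on $\Gamma$: a symmetric coercive transmission problem. A unique weak solution exists by Lax--Milgram in the Hilbert space $\{u\in H^1(\Omega):u|_\Gamma\in H^1(\Gamma),\,u|_S=0\}$, and a standard bootstrap using Theorem~\ref{Theorem A priori estimate} upgrades it to a classical $C^{2,\beta}$ solution.

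The most delicate step is the base case at $t=0$: it relies on a separate weak-formulation argument followed by regularity bootstrap. The sign $\alpha=+1$ is crucial there (making the jump term match what integration by parts produces, so that the bilinear form is coercive) and also in the maximum principle step (to sign the Hopf contributions favorably); the method would break down in the unfavorable case $\alpha=-1$. The hypothesis $\mathfrak{c}>0$ similarly supplies the coercivity on $\Gamma$ without invoking a Poincar\'e-type inequality, which is important since the Wentzell condition does not impose any Dirichlet-like constraint on the relevant piece of boundary.
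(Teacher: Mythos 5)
Your overall skeleton is exactly the paper's: a maximum principle giving uniqueness and a $C^0$ bound (the paper's Lemma \ref{Lemma Max Principle elliptic theory}), the Schauder estimate of Theorem \ref{Theorem A priori estimate} applied globally since $S\cap\Gamma=\emptyset$, and the method of continuity with a $\theta$-uniform a priori bound $\|u\|_X\le C\|\mathcal{T}_t u\|_Y$. The maximum-principle argument at a boundary maximum (tangential Hessian $\le 0$, one-sided normal derivatives signing $\jump{\partial_N u}\ge 0$, then $\mathfrak{c}>0$) is the paper's argument almost verbatim.

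Where you diverge is the choice of homotopy, and this is where a genuine gap appears. The paper deforms only the boundary operator, taking $B_\theta u=\theta Bu+(1-\theta)B'u$ with $B'u=-\sum\partial_{x_s}^2u+u$, so that at $\theta=0$ the transmission term $\jump{\partial_N u}$ has coefficient $0$: the base case then \emph{decouples} into an elliptic equation on the closed manifold $\Gamma$ (solved classically for $u|_\Gamma=\varphi\in C^{2,\beta}(\Gamma)$) followed by two independent Dirichlet problems in $\Omega_1$ and $\Omega_2$, all handled by standard Schauder theory with no weak formulation. Your homotopy keeps the jump term at $t=0$, so your base case is still a coupled transmission--Wentzell problem. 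The Lax--Milgram step is fine (with $\alpha=+1$ the form $\int_\Omega\nabla u\cdot\nabla v+\int_\Gamma\nabla_\Gamma u\cdot\nabla_\Gamma v+\int_\Gamma uv$ is indeed what integration by parts produces, and it is coercive on your space using Poincar\'e from $u|_S=0$), but the claim that ``a standard bootstrap using Theorem \ref{Theorem A priori estimate} upgrades it to a classical $C^{2,\beta}$ solution'' does not work as stated: Theorem \ref{Theorem A priori estimate} is an \emph{a priori} estimate for solutions already known to lie in $C^{2,\beta}(\overline{\Omega_1})\cap C^{2,\beta}(\overline{\Omega_2})$ (and likewise Proposition \ref{Theorem elliptic higher regularity} assumes $u\in C^2$), so neither can promote an $H^1$ weak solution. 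To close this you would need an actual regularity ladder for the coupled system --- e.g.\ $\jump{\partial_\nu u}\in H^{-1/2}(\Gamma)$ gives $u|_\Gamma\in H^{3/2}(\Gamma)$ via the surface equation, hence $u_k\in H^2(\Omega_k)$, hence $\jump{\partial_\nu u}\in H^{1/2}(\Gamma)$, and so on until Sobolev embedding lets you switch to Schauder theory --- which is doable but is precisely the work the paper's choice of $B'$ is designed to avoid. Either carry out that bootstrap explicitly or modify your homotopy to scale the transmission term to zero at $t=0$.
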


We emphasize that this theorem only holds if we assume $c \ge 0$, $\mathfrak{c} > 0$, and $\alpha$ has a favorable sign.  However, for a general $c$, $\mathfrak{c}$, and $\alpha$, we are still able to assert the Fredholm solvability of the problem. Letting
\begin{equation} \label{defn X}
	X := C^{2,\beta}(\overline{\Omega_1}) \cap C^{2,\beta}(\overline{\Omega_2}) \cap \{u|_S=0\} \cap C^{0,\beta}(\overline{\Omega}),
\end{equation}
which is a Banach space with respect to the norm
\[
	\|u\|_X := \|u\|_{C^{2,\beta}(\Omega_1)} + \|u\|_{C^{2,\beta}(\Omega_2)} + \|u\|_{C^{0,\beta}(\Omega)},
\]
we have the following theorem.

\begin{thm}[Fredholm solvability] \label{Theorem Fredholm solvability}
Suppose the coefficients $a^{ij}, b^i, c, \mathfrak{a}^{st}, \mathfrak{b}^s, \mathfrak{c}$ exhibit the same regularity as in Theorem \ref{Theorem A priori estimate} and $\alpha = \pm 1$. Then either
\begin{enumerate}[label=\alph*)]
	\item the homogeneous problem \eqref{Main Eqn} with $f = g = 0$ has nontrivial solutions that form a finite dimensional subspace of $X$, or 
	\item the homogeneous problem has only the trivial solution in which case the inhomogeneous problem has a unique solution in $X$ for all $f \in C^{0,\beta}(\overline{\Omega_1}) \cap C^{0,\beta}(\overline{\Omega_2})$ and $g \in C^{0,\beta}(\Gamma)$.
\end{enumerate}
\end{thm}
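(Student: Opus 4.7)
The plan is to realize the boundary value problem \eqref{Main Eqn} as a bounded linear operator $T : X \to Y$ with $Y := \bigl( C^{0,\beta}(\overline{\Omega_1}) \cap C^{0,\beta}(\overline{\Omega_2}) \bigr) \times C^{0,\beta}(\Gamma)$, defined by $T u := (L u, B u)$. I would show that $T$ differs from an invertible operator by a compact one, and then invoke the classical Fredholm alternative for operators of the form $I + \mathrm{compact}$ on $X$.

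For the invertible comparison operator, I would fix $\lambda > 0$ so large that $c + \lambda \ge 0$ on $\Omega$ and $\mathfrak{c} + \lambda > 0$ on $\Gamma$, and let $T_0 u := (L_0 u, B_0 u)$, where $L_0 = L + \lambda I$ and $B_0$ is obtained from $B$ by replacing $\alpha$ with $+1$ and $\mathfrak{c}$ with $\mathfrak{c} + \lambda$. The coefficients of $T_0$ retain the regularity of Theorem~\ref{Theorem A priori estimate} and satisfy the sign conditions of Theorem~\ref{Theorem Existence and uniqueness of C^(2,beta) with c >= 0}, so $T_0 : X \to Y$ is bijective, and hence $T_0^{-1} : Y \to X$ is bounded by the open mapping theorem.

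Next I would verify that $K := T - T_0 : X \to Y$ is compact. In components,
\[
K u \;=\; \bigl( -\lambda u,\ (\alpha - 1) \jump{\partial_N u} - \lambda\, u|_\Gamma \bigr).
\]
On bounded sets in $X$, the restrictions $u|_{\Omega_k}$ are bounded in $C^{2,\beta}(\overline{\Omega_k})$, and the compact embeddings $C^{2,\beta}(\overline{\Omega_k}) \hookrightarrow C^{0,\beta}(\overline{\Omega_k})$ make the first component a compact map. On $\Gamma$, the trace $u|_\Gamma$ is bounded in $C^{2,\beta}(\Gamma)$; and writing $\partial_N u = \sum_{i,j} a^{ij} \nu_i \partial_{x_j} u$, the regularity $a^{ij} \in C^{1,\beta}(\overline{\Omega_k})$ and $\partial_{x_j} u \in C^{1,\beta}(\overline{\Omega_k})$ implies $\jump{\partial_N u}$ is bounded in $C^{1,\beta}(\Gamma)$. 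Since both $C^{2,\beta}(\Gamma)$ and $C^{1,\beta}(\Gamma)$ embed compactly into $C^{0,\beta}(\Gamma)$, the second component of $K$ is compact as well.

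Finally, $T u = (f, g)$ is equivalent to $(I + T_0^{-1} K) u = T_0^{-1}(f, g)$ in $X$, and $T_0^{-1} K$ is a compact operator on $X$. The Riesz--Schauder theorem then yields the dichotomy: either $\ker T = \ker (I + T_0^{-1} K)$ is a nontrivial finite-dimensional subspace of $X$ (case (a)), or it is trivial, in which case $I + T_0^{-1} K$ is invertible on $X$ and problem \eqref{Main Eqn} admits a unique solution in $X$ for every $(f, g) \in Y$ (case (b)). The principal subtlety, in my view, is verifying that the jump term $\jump{\partial_N u}$ lands compactly in $C^{0,\beta}(\Gamma)$: this requires using both the full $C^{2,\beta}$ regularity of $u$ on each side of $\Gamma$ and the $C^{1,\beta}$ regularity of the coefficients $a^{ij}$, so that the conormal trace gains one full derivative plus $\beta$ beyond the target norm. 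The consequence is that the whole $\alpha \jump{\partial_N u}$ term is absorbed into the compact correction $K$ rather than the principal part, which is precisely what allows the argument to proceed uniformly in the sign $\alpha = \pm 1$.
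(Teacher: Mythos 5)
Your proposal is correct. It shares the paper's broad strategy --- compare the full operator to the invertible operator furnished by Theorem \ref{Theorem Existence and uniqueness of C^(2,beta) with c >= 0}, show the difference is compact, and invoke the Riesz--Schauder alternative --- but it differs in two worthwhile respects. First, the paper handles the unfavorable sign $\alpha=-1$ separately, via the homotopy $\widetilde{\mathfrak{L}}_\theta$ and continuity of the Fredholm index in Remark \ref{Remark Fredholm index 0}, and then runs the compact-perturbation argument only for $\alpha=+1$ with purely zeroth-order perturbations $(\sigma u, \tau u|_\Gamma)$. You instead absorb the entire term $(\alpha-1)\jump{\partial_N u}$ into the compact correction, using exactly the right observation: since $u\in C^{2,\beta}(\overline{\Omega_k})$, $a^{ij}\in C^{1,\beta}(\overline{\Omega_k})$, and $\nu\in C^{1,\beta}(\Gamma)$, the conormal trace lands in $C^{1,\beta}(\Gamma)$, which embeds compactly into $C^{0,\beta}(\Gamma)$ because $\Gamma$ is a compact manifold under the standing hypothesis $S\cap\Gamma=\emptyset$. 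This makes the argument uniform in $\alpha=\pm 1$ and renders the index-continuity detour unnecessary for this theorem (though Remark \ref{Remark Fredholm index 0} is still used elsewhere, e.g.\ in Lemma \ref{Lemma range air vorticity}). Second, your verification of compactness is more direct than the paper's: the paper proves compactness of its solution operator $\mathcal{K}$ by rerunning the Schauder estimate and maximum principle on a bounded sequence of data, whereas you observe that $K=T-T_0$ itself maps bounded sets of $X$ into bounded sets of strictly smoother spaces, so $T_0^{-1}K$ is compact on $X$ once $T_0^{-1}$ is bounded by the open mapping theorem. Both routes are sound; yours buys a cleaner, sign-independent treatment at the cost of having to check the regularity bookkeeping for the conormal jump, which you do correctly.
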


A great deal of research has been devoted to studying elliptic problems with linear and nonlinear Wentzell boundary conditions, but they remain comparatively less well-understood. One of the earliest works to consider Wentzell conditions was Korman \cite{Korman1986} who, like us, was interested in their connection to a problem in water waves. Specifically, he investigated a model describing three-dimensional periodic capillary-gravity waves where the gravity pointed upward. A Schauder theory was later provided by Luo and Trudinger \cite{Luo_Trudinger1991} for the linear case. In the quasilinear setting, Luo \cite{Luo1991} gave a priori estimates for uniformly elliptic Wentzell conditions, while later Luo and Trudinger \cite{Luo_Trudinger1994} studied the degenerate case. More recently, Nazarov and Paletskikh \cite{Nazarov_Paletskikh2015} derived local H\"older estimates in the spirit of De Giorgi for divergence form elliptic equations with measurable coefficients and a Wentzell condition imposed on a portion of the boundary. See also the survey by Apushkinskaya and Nazarov \cite{Apushkinskaya_Nazarov2000} for a summary of the progress made on the nonlinear problem.  

Transmission boundary conditions are of great importance to physics and other applied sciences. They are also of interest from a purely mathematical perspective as they arise naturally in the weak formulation of PDEs with discontinuous coefficients. The study of transmission problems dates back to the 1950s and 1960s. Schechter \cite{Schechter1960} and \u{S}eftel' \cite{Seftel1963} investigated even-order elliptic equations on a smooth and bounded domain with smooth coefficients. Schechter obtained estimates and provided an existence for weak solutions. His strategy involved transforming the transmission problem into a mixed boundary value problem for a system of equations. On the other hand, \u{S}eftel' found a priori $L^p$-estimates. Ole\u{\i}nik \cite{Oleinik1961} also studied transmission problems for second-order elliptic equations with smooth coefficients; approximating equations were used to derive results for weak solutions. One of the most foundational work was done by Ladyzhenskaya and Ural'tseva \cite{Ladyzhenskaya1968}, who considered second-order elliptic equations on a bounded domain and then obtained estimates for weak and classical solutions in Sobolev and H\"older spaces, respectively. In contrast to Schechter's approach, Ladyzehnskaya and Ural'tseva exploited cleverly chosen test functions to deduce their a priori estimates. More recently, Borsuk \cite{Borsuk2008,Borsuk2009,Borsuk2010} has treated linear and quasilinear transmission problems on non-smooth domains.

Apushkinskaya and Nazarov \cite{Apushkinskaya_Nazarov2001} considered Sobolev and H\"older solutions of linear elliptic and parabolic equations for two-phase systems.  However, they only examined the problem with a favorable sign $\alpha = +1$ of the transmission term, and did not study Fredholm property.  Note that in water wave applications, the sign is typically unfavorable.  With that in mind, in this paper we make the effort to also include Schauder estimates and Fredholm solvability for $\alpha = -1$ as well; see also Remarks \ref{Remark sign jump co-normal term}, \ref{Remark Fredholm index 0}, and \ref{Remark sign of B switched}.  Our approach is to view the Wentzell boundary condition as a non-local $(n-1)$-dimensional elliptic equation, treating the jump in the co-normal derivative term as forcing that can be controlled using techniques from the literature on transmission problems.

\subsection{Steady wind-driven capillary-gravity water waves}
Our second set of results considers an application of the above elliptic theory to a problem in water waves. In particular, we will prove the existence of small amplitude periodic wind-driven capillary-gravity waves in a two-phase air-water system. One of the main novelties here is that we also allow for a general distribution of vorticity in the air region. For simplicity we take the flow in the water to be irrotational. When discussing these results, we adopt notational conventions common in studies of steady water waves which occasionally conflict with our notations in the elliptic theory part.

\begin{figure}[!ht]
	\centering
	\begin{overpic}[scale=.2]{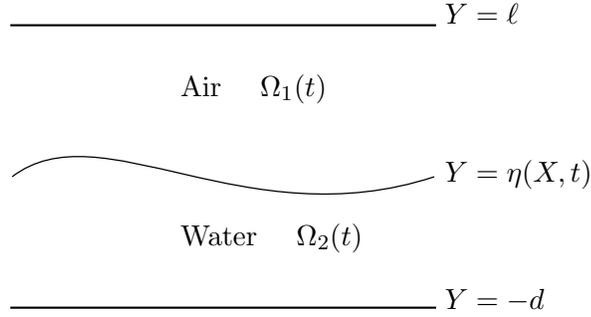}
		\put (40, 50) {Air \quad $\Omega_1(t)$}
		\put (40, 15) {Water \quad $\Omega_2(t)$}
		\put (102, 0) {$Y = -d$}
		\put (102, 30) {$Y = \eta(X, t)$}
		\put (102, 67) {$Y = \ell$}
	\end{overpic}
	\caption{The air-water system}
	\label{fig:air_water_domain}
\end{figure}

Let us now formulate the problem more precisely. Fix a Cartesian coordinate system $(X,Y) \in \mathbb{R}^2$ so that the $X$-axis points in the direction of wave propagation and the $Y$-axis is vertical. The ocean bed is assumed to be flat and at the depth $Y = -d$, while the interface between the water and the atmosphere is a free surface given as the graph of $\eta = \eta(X, t)$. We then normalize $\eta$ so that the free surface is oscillating around the line $Y = 0$. The atmospheric domain is assumed to be bounded in $Y$; that is, the air region lies below $Y = \ell$ for some fixed $\ell > 0$. At a given time $t$, the fluid domain is
\[
	\Omega(t) = \Omega_1(t) \cup \Omega_2(t),
\]
where $\Omega_1$ is the air region,
\[
	\Omega_1(t) := \{ (X, Y) \in \mathbb{R}^2: \eta(X, t) < Y < \ell \},
\]
and $\Omega_2$ is the water region,
\[
	\Omega_2(t) := \{ (X, Y) \in \mathbb{R}^2: -d < Y < \eta(X, t) \}.
\]
We also denote $\mathcal{I}(t) := \partial\Omega_1(t) \cap \partial\Omega_2(t)$. Here we think of $\mathcal{I}(t)$ as playing the role of $\Gamma$ in the notation of the previous subsection.

Let $u = u(X, Y, t)$ and $v = v(X, Y, t)$ be the horizontal and vertical fluid velocities, respectively, and denote by $P = P(X, Y, t)$ the pressure.  We say that this is a \emph{traveling wave} provided that there exists a wave speed $c > 0$ such that the change of variables
\[
	(X,Y) \mapsto (x,y) := (X-ct, Y)
\]
eliminates time dependence.  The velocity field is assumed to be incompressible and, in the moving frame, $(u, v, \eta, P)$ are taken to be $2\pi$-periodic in $x$.

For water waves, the governing equations are the incompressible steady Euler system: 
\begin{equation} \label{Eulerian eqn 1}
	\begin{dcases}
		u_x + v_y &= 0, \\
		\varrho(u-c)u_x + \varrho v u_y &= -P_x, \\
		\varrho(u-c)v_x + \varrho v v_y &= -P_y -g\varrho,
	\end{dcases}
	\qquad \text{in } \Omega
\end{equation}
where $g > 0$ is the gravitational constant, and $\varrho = \varrho_1 \mathbbm{1}_{\Omega_1} + \varrho_2 \mathbbm{1}_{\Omega_2}$ with $\varrho_1$ and $\varrho_2$ assumed to be constant densities of $\Omega_1$ and $\Omega_2$, respectively. We assume $\varrho_1 < \varrho_2$. The symbol $\mathbbm{1}_{\Omega_i}$ stands for the characteristic function on $\Omega_i$. As above, $\jump{\cdot}$ denotes the jump over $\mathcal{I}$, that is, $\jump{\cdot} = (\cdot)|_{\Omega_1} - (\cdot)|_{\Omega_2}$.

The kinematic and dynamic boundary conditions for the lidded atmosphere problem with surface tension $\sigma$ are
\begin{equation} \label{Eulerian eqn 2}
	\begin{dcases}
		v = 0 & \text{on } y = \ell, \\
		v = 0 & \text{on } y = -d, \\
		v = (u - c)\eta_x &\text{on } y = \eta(x), \\
		\jump{P} = -\sigma \frac{\eta_{xx}}{\left( 1 + (\eta_x)^2 \right)^{3/2}} &\text{on } y = \eta(x).
	\end{dcases}
\end{equation}
Note that the last condition will give rise to nonlinear Wentzell and transmission terms. In particular, the right hand side can be viewed as a second-order elliptic operator acting on $\eta$, while the jump in the pressure will relate to a jump in $(u-c)^2+v^2$ via Bernoulli's theorem that we discuss below.

We consider waves without (horizontal) stagnation, that is, we will always assume
\begin{equation} \label{wo stagnation}
	u - c < 0 \quad \text{in } \overline{\Omega}.
\end{equation}
As $(u,v)$ is divergence free according to \eqref{Eulerian eqn 1}, we can define the \emph{pseudostream} function $\psi = \psi(x,y)$ for the flow by
\begin{equation} \label{pseudostream function defn}
	\psi_x = \sqrt{\varrho} v, \qquad \psi_y = \sqrt{\varrho} (u - c) \qquad \text{in } \Omega.
\end{equation}
The level sets of $\psi$ are called \emph{streamlines}. Without stagnation \eqref{wo stagnation}, we have $\psi_y < 0$, which implies that each streamline is given as the graph of a function of $x$ via a simple Implicit Function Theorem argument. The boundary conditions in \eqref{Eulerian eqn 2} show that the air-water interface, bed, and lid are each level sets of $\psi$. We will take $\psi = 0$ on the upper lid so that $\psi = -p_0$ on $y = -d$, where $p_0$ is defined by
\[
	p_0 := \int_{-d}^{\eta(x)} \sqrt{\varrho(x,y)} (u(x, y) - c) \,\mathrm{d}y.
\]
It can be shown that $p_0$ does not depend on $x$ (see, for example, \cite{Walsh2009}). Bernoulli's theorem states that
\[
	E := P + \frac{\varrho}{2}((u - c)^2 + v^2) + g \varrho y
\]
is constant along streamlines. Evaluating the jump of $E$ on the interface gives
\[
	\jump{|\nabla \psi|^2} + 2g\jump{\varrho}(\eta + d) + \sigma \kappa = Q \quad \text{on } y = \eta(x),
\]
where $\kappa$ is the signed curvature of the air-water interface and $Q := 2\jump{E + g \varrho d}$.

Recall that in two dimensions, the \emph{vorticity} $\omega$ is defined to be
\[
	\omega := v_x - u_y.
\]
If there is no stagnation \eqref{wo stagnation}, there exists a function $\gamma$, called the \emph{vorticity strength function}, such that
\[
	\omega(x,y) = \gamma(\psi(x, y)) \quad \text{for all } (x,y) \in \Omega.
\]
The vorticity plays a key role in the wind generation of water waves as we will discuss below.  Mathematically, it substantially complicates the analysis.

Finally, we will use the following notational conventions. For any integer $k \ge 0$, $\alpha \in (0, 1)$, and an open region $R \subset \mathbb{R}^n$, we define the space $C^{k+\alpha}_\text{per}(\overline{R})$ to be the set of $C^{k+\alpha}(\overline{R})$ functions that are $2\pi$-periodic in their first argument. 

Our main theorem is an existence result for traveling capillary-gravity water waves in the presence of wind.

\begin{thm}[Existence of small amplitude wind-driven water waves] \label{Theorem existence of small amplitude wind-drive water waves}
Fix $d$, $\ell$, $c$ $> 0$, and $p_0 < p_1 < 0$. For any vorticity function $\gamma \in C^{0, \alpha}([p_1, 0])$ and $\sigma > 0$ sufficiently large, there exists a $C^1$ curve
\[
	\mathcal{C}_{\mathrm{loc}}' := \{ (u(s), v(s), \eta(s), Q(s)): s \in (-\epsilon, \epsilon) \}
\]
of traveling wave solutions to the capillary-gravity water wave problem \eqref{Eulerian eqn 1}--\eqref{wo stagnation} such that
\begin{enumerate} [label=\alph*)]
	\item Each $(u, v, \eta, Q) \in \mathcal{C}_\mathrm{loc}'$ is of class
	\[
		\left( u(s), v(s), \eta(s), Q(s) \right) \in \left( C_{\mathrm{per}}^\alpha(\overline{\Omega}) \cap C_{\mathrm{per}}^{1 + \alpha}(\overline{\Omega}(s) \backslash \mathcal{I}(s)) \right)^2 \times C_{\mathrm{per}}^{2 + \alpha}(\mathbb{R}) \times \mathbb{R} =: \solnSpaceEuler,
	\]
	where $u(s)$ and $v(s)$ are even and odd in the first coordinate, respectively, $\eta(s)$ is even in $x$, and $\Omega(s)$ is the domain corresponding to $\eta(s)$;
	
	\item $\left( u(0), v(0), \eta(0), Q(0) \right) = (U_*(y), 0, 0, Q_*)$, where $(U_*, Q_*)$ is laminar solution.
\end{enumerate}
\end{thm}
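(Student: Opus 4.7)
The plan is to reformulate the free boundary problem on a fixed domain via a semi-hodograph (Dubreil--Jacotin) transformation, write the resulting system as an abstract operator equation in H\"older spaces, and then apply local bifurcation theory (Crandall--Rabinowitz) at a laminar background, with the linearized analysis driven by the Schauder and Fredholm results established in Theorems \ref{Theorem A priori estimate}--\ref{Theorem Fredholm solvability}. Concretely, using the pseudostream function $\psi$ defined in \eqref{pseudostream function defn}, I introduce new variables $q = x$, $p = -\psi$, so that the free interface $\{y = \eta(x)\}$ and the rigid top/bottom collapse to the horizontal lines $p = p_1$, $p = 0$, and $p = p_0$, respectively. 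The unknown then becomes the height $h(q,p) = y + d$, which is defined on the fixed rectangle $R = \mathbb{T} \times (p_0, 0)$, split by $\Gamma = \mathbb{T} \times \{p_1\}$ into an air sub-rectangle $R_1$ and a water sub-rectangle $R_2$. In each sub-rectangle $h$ satisfies a quasilinear elliptic equation derived from the vorticity equation (with $-\Delta\psi$ set to $\sqrt{\varrho_1}\gamma(\psi)$ in air and $0$ in water), while the Bernoulli jump condition translates, after differentiation along the interface, into a nonlinear Wentzell--transmission condition at $\{p = p_1\}$ involving $h_{qq}$ through the curvature term $\sigma\kappa$ and a jump in $h_p^{-2}$ coming from $\jump{|\nabla\psi|^2}$.

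Recasting this, I define an operator
\[
	\mathcal{F} : \mathcal{U} \subset C^{2,\alpha}_{\mathrm{per}}(\overline{R_1}) \cap C^{2,\alpha}_{\mathrm{per}}(\overline{R_2}) \cap C^{0,\alpha}_{\mathrm{per}}(\overline{R}) \times \mathbb{R} \longrightarrow Y,
\]
sending $(h, Q)$ to the triple consisting of the interior PDE residuals in $R_1$ and $R_2$ and the Wentzell--transmission residual on $\Gamma$, so that $\mathcal{F}(h,Q) = 0$ is equivalent to the traveling wave problem. A one-parameter family of laminar solutions $(H_*(p), Q_*)$ is obtained by seeking $h = H_*(p)$, for which the interior equations become a pair of ODEs coupled at $p = p_1$ by the jump condition; solving these ODEs explicitly in terms of $\gamma$ and the constants $d, \ell, \varrho_i, c$ recovers the background shear profile $U_*(y)$ and the associated Bernoulli constant $Q_*$. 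Standard linearization then produces a linear operator $L := D_h\mathcal{F}(H_*, Q_*)$ of exactly the form treated in the elliptic theory above: a divergence-form operator with $C^{1,\alpha}$ piecewise coefficients, matched by a second-order tangential operator plus a jump of the co-normal derivative on $\Gamma$, precisely of the form \eqref{L operator}--\eqref{B operator} with $\alpha = -1$ (this unfavorable sign is forced by the physics, which is why we need the full strength of Theorem \ref{Theorem Fredholm solvability} rather than Theorem \ref{Theorem Existence and uniqueness of C^(2,beta) with c >= 0}).

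The bifurcation analysis proceeds by a Fourier decomposition in $q$: the ansatz $h(q,p) = \cos(kq)\,\varphi_k(p)$ reduces $L h = 0$ to a Sturm--Liouville problem for $\varphi_k$ on $(p_0, p_1) \cup (p_1, 0)$ with a transmission condition at $p_1$ whose Wentzell coefficient is proportional to $\sigma k^2$. Using Theorem \ref{Theorem Fredholm solvability} we know $L$ is Fredholm of index zero on $X$, so its kernel is entirely captured by the countable family of these mode-by-mode ODEs. For $\sigma$ sufficiently large the capillary term $\sigma k^2$ dominates in every mode except at some specific wavenumber $k_*$ (which I may take to be $k_* = 1$ after rescaling the period), where a dispersion relation of the form $\Delta(k_*, \sigma, \gamma, c, \dots) = 0$ can be arranged; the resulting kernel is then one-dimensional, spanned by a single eigenfunction $\dot{h}$ even in $q$. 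I then verify the Crandall--Rabinowitz transversality condition $\partial_Q L\,\dot h \notin \operatorname{Range} L$, which in these coordinates reduces to a signed statement on the interface and follows from an easy projection argument. The Crandall--Rabinowitz theorem applied to $\mathcal{F}$ at $(H_*, Q_*)$ then yields the $C^1$ curve $\mathcal{C}'_\mathrm{loc}$ of solutions, and unwinding the change of variables produces the claimed regularity $(u, v, \eta, Q) \in \solnSpaceEuler$, with the stated evenness/oddness inherited from working in the subspace of functions even in $q$. The main obstacle is the spectral analysis in the second paragraph: establishing Fredholmness uniformly along the curve and verifying that for $\sigma$ large the dispersion relation admits a simple real root at an allowed integer wavenumber; the smallness of the amplitude and the smoothness of $\gamma$ enter precisely to make this spectral picture robust.
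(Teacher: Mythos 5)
Your overall architecture matches the paper's: Dubreil--Jacotin change of variables to the fixed strip, a one-parameter laminar family as the trivial branch, Fourier decomposition in $q$, and Crandall--Rabinowitz. But there is a genuine gap at the heart of the spectral analysis. You dispose of the kernel computation by asserting that "for $\sigma$ sufficiently large the capillary term dominates in every mode except at some specific wavenumber" and that "a dispersion relation \dots can be arranged" with a simple root. The actual difficulty is that the mode-$n$ problem \eqref{Pn air vorticity} is a Sturm--Liouville problem in which the spectral parameter $\mu=-n^2$ appears \emph{both} in the interior equation and in the interface condition (through the $\sigma$ term), and the natural pairing $\int_{p_0}^0 a\varphi^2\,\mathrm{d}p-\tfrac{\sigma}{2}\varphi(p_1)^2$ is indefinite. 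Simplicity of the eigenvalues, reality of the relevant eigenvalue, and---crucially---the fact that there is exactly \emph{one} negative eigenvalue (so that $-1$ can be made an eigenvalue while $-n^2$ for $n\ge 2$ and the $n=0$ mode are simultaneously excluded) are not automatic and cannot be read off from "$\sigma k^2$ dominates." The paper obtains them by realizing the problem as a self-adjoint operator $K$ on a Pontryagin $\pi_1$-space, invoking the one-dimensional maximal invariant negative-semidefinite subspace, and then locating $\lambda^*$ via the Rayleigh quotient $\nu(\lambda)$: a size condition on $\sigma$ (Lemma \ref{Lemma size condition air vorticity}) forces $\nu>-1$ somewhere, Lemma \ref{Lemma Rayleigh quotient < -n^2} forces $\nu<-n^2$ for large $\lambda$, and monotonicity of $\nu$ gives uniqueness of $\lambda^*$. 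Some argument playing this role is indispensable; without it the one-dimensionality of the kernel is an assertion, not a proof.

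A secondary but real issue is your choice of $Q$ as the bifurcation parameter. The laminar family is naturally parametrized by $\lambda$, and $Q(\lambda)$ is concave with a critical point at $\lambda_0$; to parametrize the trivial branch by $Q$ you must verify $\lambda^*\neq\lambda_0$ (the paper does this in Lemma \ref{existence maximizer air vorticity}, and it is also exactly what rules out the $n=0$ mode). Moreover, if you differentiate the operator in $Q$ while holding the background fixed, the mixed derivative vanishes, since $Q$ enters the interface residual only as an additive constant independent of $h$; the transversality derivative must be taken along the laminar family, i.e.\ as $\mathcal{F}_{\lambda m}(\lambda^*,0)\varphi^*$, and checking that it violates the orthogonality condition \eqref{orthogonality condition air vorticity} is an explicit computation with a definite sign ($\Xi<0$), not an "easy projection argument." Finally, note that in the paper's normalization the linearized interface operator actually has the favorable structure handled by Remark \ref{Remark sign of B switched}; the role of the Fredholm-index-zero result (Remark \ref{Remark Fredholm index 0}) is to identify the range as the orthogonal complement of the kernel in Lemma \ref{Lemma range air vorticity}, rather than to rescue an unfavorable sign.
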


We prove this theorem using a local bifurcation theoretic strategy that draws on the ideas of Constantin and Strauss \cite{Constantin_Strauss2004}, who studied rotational periodic gravity water waves in a single fluid. Indeed, following the publication of \cite{Constantin_Strauss2004},  traveling water waves with vorticity have been an extremely active area of research (see, for example, the surveys in \cite{Strauss2010,Constantin2011}).

Our most direct influence is the work of B\"uhler, Shatah, and Walsh \cite{Buhler_Shatah_Walsh2013} on the existence of steady gravity waves in the presence of wind. These authors studied exactly the system \eqref{Eulerian eqn 1}--\eqref{wo stagnation} taking $\sigma = 0$. One of the main objectives of that paper was to construct waves that were dynamically accessible from an initial state where the flow is laminar and the horizontal velocity experiences a jump over the interface. More specifically, this meant that the circulation along each streamline was prescribed in order to ensure that its values in the air and water regions were distinct (see Remark \ref{Remark Gamma_Rel}). We also adopt this approach in the present paper, though the addition of surface tension necessitate many nontrivial adaptations.

\subsection{History of the problem} \label{History of the problem section}

Steady capillary and capillary-gravity waves have been the subject of extensive research. Because we are particularly interested in the role of vorticity, we will restrict our discussion to rotational waves. In this setting, progress is much more recent and begins with the work of Wahl\'en \cite{Wahlen_gravity2006,Wahlen2006}, who proved the existence of small-amplitude periodic capillary and capillary-gravity waves in two-dimensions for a single fluid system. As in \cite{Constantin_Strauss2004}, this was done for a general vorticity function $\gamma$. Contrary to the gravity wave case, Wahl\'en showed that with surface tension there can be double bifurcation points; this is a rotational analogue of the famous Wilton ripples \cite{Wilton1915}. Later, Walsh considered two-dimensional periodic capillary-gravity waves with density stratification \cite{Walsh2014,Walsh2014_2}.  

Recently, Martin and B-V Matioc proved the existence of steady small-amplitude capillary-gravity water waves with piecewise constant vorticity \cite{Martin_Matioc2014}.  While they consider a one-layer model, the analysis has a similar flavor to that in the present paper.  A-V Matioc and B-V Matioc also constructed weak solutions for steady capillary-gravity water waves in a single fluid \cite{Matioc_Matioc2014}.  

The waves we construct can also be viewed as internal waves moving along the interface between two immiscible fluid layers confined in a channel. Versions of this problem have been investigated by many authors. For instance, Amick--Turner \cite{Amick_Turner1986} and Sun \cite{Sun1997,Sun2002} considered the existence of solitary waves in a channel where the flow is irrotational at infinity.  Amick--Turner built their solitary waves as limits of periodic waves with the period tending to infinity. Sun, on the other hand, exploited the fact that the leading-order form of the wave is given by the Benjamin--Ono equation, and then used singular integral operator estimates to control the remainder. The existence of continuously stratified channel flows has also been verified in a number of regimes. Note that these are rotational, since heterogeneity in the density produces vorticity. Specifically, Turner \cite{Turner1981} and Kirchg\"assner \cite{Kirchgassner1982} investigated small-amplitude continuously stratified waves using a variational scheme and a center manifold reduction method, respectively. A large-amplitude existence theory  was also provided by Bona, Bose, and Turner \cite{Bona_Bose_Turner1983}, Lankers and Friesecke \cite{Lankers_Friesecke1997}, and Amick \cite{Amick1984}. We remark that, in all of these works, the vorticity vanishes at infinity. Finally, internal waves with surface tension on the interface were recently considered by Nilsson \cite{Nilsson2017}. In that paper, each fluid layer was assumed to be irrotational and constant density. Using spatial dynamics and a center manifold reduction, Nilsson proved the existence of both periodic and solitary wave solutions.

As mentioned above, steady water waves in the presence of wind was studied by B\"uhler, Shatah, and Walsh in \cite{Buhler_Shatah_Walsh2013}. Our main contribution relative to that work is to account for capillary effects on the air-water interface. It is known that surface tension is important in the formation of wind-driven waves. Indeed, high frequency and small-amplitude capillary-gravity waves are the first to form when wind blows over a quiescent body of water.

One of the most successful explanations for the mechanism behind the wind generation of water waves was given by Miles \cite{Miles1957}. His main observation was that vorticity in the air region can create a certain resonance phenomenon that destabilizes the system. Importantly, this so-called \emph{critical layer instability} can occur even when the horizontal velocity is continuous --- or nearly continuous --- over the interface, and therefore does not require exceedingly strong wind speeds like the Kelvin--Helmholtz model. The mathematical ideas underlying Miles's theory were recently reexamined and rigorously proved by B\"uhler, Shatah, Walsh, and Zeng \cite{Buhler_Shatah_Walsh_Zeng2016}. In that work, the authors also allowed surface tension. This is somewhat important as the interface Euler problem itself is ill-posed when there is a jump in the tangential velocity and there is no surface tension (see, for example, \cite{Beale_Hou_Lowengrub1993}).  In a forthcoming work, the author intends to study the stability of the family of waves constructed in Theorem \ref{Theorem existence of small amplitude wind-drive water waves}.  This will serve as a model for wind generation of water waves in the spirit of Miles, but with an initial state that is not purely laminar.

\subsection{Plan of the article}

We now briefly discuss the strategies we use to derive these results. The elliptic theory is proved in Section \ref{elliptic section}. Our approach is based on the work of Luo and Trudinger \cite{Luo_Trudinger1991}, who gave Schauder estimates for elliptic equations with Wentzell boundary conditions.

In Section \ref{existence section}, we construct capillary-gravity water waves where the air region is rotational. Following B\"uhler, Shatah, and Walsh \cite{Buhler_Shatah_Walsh2013}, the first step in this procedure is to reformulate the interface Euler system \eqref{Eulerian eqn 1}--\eqref{wo stagnation} as a quasilinear elliptic equation on a fixed domain. Due to surface tension, there is now a nonlinear Wentzell condition on the image of the interface in these new coordinates. We construct the non-laminar waves using local bifurcation theory.  This entails studying the spectrum of the linearized equation at a laminar flow, and here we make essential use of the elliptic theory developed in Section \ref{elliptic section}. One major difficulty that arises is that this linearized problem is of Sturm--Liouville type, but associated to an indefinite inner product. Consequently, to successfully determine the spectral behavior, we must work in Pontryagin spaces. A similar issue was encountered by Wahl\'en in \cite{Wahlen_gravity2006,Wahlen2006}. Finally, we apply the Crandall--Rabinowitz local bifurcation theorem to obtain Theorem \ref{Theorem existence of small amplitude wind-drive water waves}.

\section{Elliptic Theory} \label{elliptic section}
To simplify subsequent calculations, it is convenient to first change variables. Fix a point $x^0 \in \Gamma$. Then by the assumption on $\Omega$, there is a neighborhood $\mathcal{U}$ of $x^0$ and a $C^{2,\beta}$ diffeomorphism that maps $\mathcal{U}$ to some ball $B \subset \mathbb{R}^n$ so that $\Gamma$ maps to $\{x_n = 0\}$, $\Omega_1$ to $B \cap \{x_n > 0\}$, and $\Omega_2$ to $B \cap \{x_n < 0\}$ (see, for example, \cite{Luo_Trudinger1991}). Then it suffices to assume that $\Gamma$ is the hyper-plane $\{x_n = 0\}$, and consequently, $\Omega_1$ and $\Omega_2$ lie inside the upper-half and lower-half planes respectively.

In this case, the co-normal derivative operator simplifies to
\[
	\partial_N u = -\sum_{j=1}^n a^{nj}\partial_{x_j}u,
\]
and the Wentzell and transmission condition on $\Gamma$ becomes
\[
	Bu = -\sum_{s,t=1}^{n-1} \partial_{x_s}\left(\mathfrak{a}^{st}\partial_{x_t}u\right) + \alpha \jump{\partial_N u} + \sum_{s=1}^{n-1} \mathfrak{b}^s \partial_{x_s}u + \mathfrak{c} u.
\]
We also denote by $\nabla'$ the tangential gradient on $\Gamma$ in this case.

\subsection{Classical solutions}
First, we prove our theorem on Schauder estimates for solutions in H\"older spaces. This relies on the observation that one can apply $(n-1)$-dimensional elliptic estimates for $B$ on $\Gamma$ with transmission boundary condition being lower ordered.

\begin{proof}[Proof of Theorem \ref{Theorem A priori estimate}]
Using the above change of variables, we rewrite the condition on $\Gamma$
\[
	-\sum_{s,t=1}^{n-1} \mathfrak{a}^{st} \partial_{x_s}\partial_{x_t}u - \sum_{s,t=1}^{n-1} (\partial_{x_s} \mathfrak{a}^{st})(\partial_{x_t}u) - \alpha \sum_{j=1}^n a^{nj}\partial_{x_j}u_1 + \alpha \sum_{j=1}^n a^{nj}\partial_{x_j}u_2 + \sum_{s=1}^{n-1} \mathfrak{b}^s \partial_{x_s}u + \mathfrak{c} u = g.
\]
We then cover $\Gamma$ by a finite number of spheres in which the estimate in \cite[Theorem 6.2]{Gilbarg_Trudinger2001} for $B$ on $\Gamma$ can be applied. This ensures the existence of a positive constant $C = C(n, \beta, L, B, \mu)$ such that
\begin{align} \label{u1 est on Gamma}
	\|u_1\|_{C^{2,\beta}(\Gamma')} \le C(\|u_1\|_{C^0(\Gamma)} + \|u_2\|_{C^{1,\beta}(\Gamma')} + \|g\|_{C^{0,\beta}(\Gamma)}).
\end{align}
Similarly, we have
\begin{align} \label{u2 est on Gamma}
	\|u_2\|_{C^{2,\beta}(\Gamma')} \le C(\|u_2\|_{C^0(\Gamma)} + \|u_1\|_{C^{1,\beta}(\Gamma')} + \|g\|_{C^{0,\beta}(\Gamma)}).
\end{align}
Next, we use a basic elliptic estimate for the Dirichlet problem in $\Omega'_k$ with boundary condition $u_k|_\Gamma$ (see, for example, \cite[Theorem 6.6]{Gilbarg_Trudinger2001}), to obtain
\begin{align} \label{u_i est in Omega}
	\|u_k\|_{C^{2,\beta}(\Omega'_k)} \le C \left( \|u_k\|_{C^0(\Omega_k)} + \|u_k\|_{C^{2,\beta}(\Gamma')} + \|f\|_{C^{0,\beta}(\Omega_k)} \right).
\end{align}
Moreover, we have the following interpolation
\begin{align} \label{interpolation u C(1,beta)}
	\|u_k\|_{C^{1,\beta}(\Gamma')} \le C_\epsilon \|u\|_{C^0(\Gamma)} + \epsilon \|u_k\|_{C^{2,\beta}(\Gamma')}
\end{align}
for some $\epsilon > 0$. Finally, evaluating \eqref{u_i est in Omega} with $k=1,2$ and summing, using $\jump{u}|_\Gamma=0$ and the estimates \eqref{u1 est on Gamma}, \eqref{u2 est on Gamma}, \eqref{interpolation u C(1,beta)} and choosing appropriate $\epsilon$ give
\[
	\|u_1\|_{C^{2,\beta}(\Omega'_1)} + \|u_2\|_{C^{2,\beta}(\Omega'_2)} \le C \left( \|u\|_{C^0(\Omega)} + \|f\|_{C^{0,\beta}(\Omega_1)} + \|f\|_{C^{0,\beta}(\Omega_2)} + \|g\|_{C^{0,\beta}(\Gamma)} \right). \qedhere
\]
\end{proof}

\begin{remark} \label{Remark sign jump co-normal term}
A version of Theorem \ref{Theorem A priori estimate} was stated in \cite[Theorem $2.3^*$]{Apushkinskaya_Nazarov2001} without proof under the assumption that $\alpha = +1$ in the boundary operator \eqref{B operator}.  However, according to the above proof, this theorem holds regardless of the sign of $\jump{\partial_N u}$.  
\end{remark}

Next, in preparation for proving the existence and uniqueness result, we first establish a maximum principle.  Apushkinskaya and Nazarov state a similar result in \cite[Theorem 3.1]{Apushkinskaya_Nazarov2001}.  Using our notations, we have the lemma.  

\begin{lemma}[Maximum Principle] \label{Lemma Max Principle elliptic theory}
Suppose the coefficients $a^{ij}, b^i, c, \mathfrak{a}^{st}, \mathfrak{b}^s, \mathfrak{c}$ exhibit the same regularity as in Theorem \ref{Theorem A priori estimate} and assume in addition that $c \ge 0$ and $\mathfrak{c} > 0$.  Let $\alpha = 1$ and suppose that $u \in C^0(\overline{\Omega}) \cap C^2(\overline{\Omega_1}) \cap C^2(\overline{\Omega_2})$ satisfies
\[
	Lu \le f \quad \text{in } \Omega, \qquad u = 0 \quad \text{on } S, \qquad Bu \le g \quad \text{on } \Gamma.
\]
Then we have the estimate
\begin{align} \label{a priori est wrt f and g}
	\sup_\Omega u \le \sup_\Gamma \left| \frac{g}{\mathfrak{c}} \right| + C \sup_\Omega \left| \frac{f}{\lambda} \right|
\end{align}
for some positive constant $C=C(\mathrm{diam}\ \Omega, \lambda, \|\partial_{x_i}a^{ij}\|_{L^\infty}, \|b^i\|_{L^\infty})$, where the $L^\infty$ norms are taken over $\Omega_1$ and $\Omega_2$.  
\end{lemma}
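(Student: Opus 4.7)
The plan is a comparison argument: subtract both the boundary contribution $\sup_\Gamma |g/\mathfrak{c}|$ and a suitably scaled bulk barrier, and then rule out a positive supremum of the resulting function by a Wentzell--Hopf analysis at the interface.

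Set $K_g := \sup_\Gamma |g/\mathfrak{c}|$ and $v := u - K_g$. Since $c \ge 0$ and $\mathfrak{c} > 0$, one checks directly that $Lv \le f$ in $\Omega$, $v \le 0$ on $S$, and $Bv \le 0$ on $\Gamma$. After translating so that $\Omega \subset \{0 \le x_1 \le D\}$ with $D = \mathrm{diam}(\Omega)$, introduce the exponential barrier $\phi(x) := e^{\mu D} - e^{\mu x_1}$. A non-divergence form calculation (using $c\phi \ge 0$ since $c \ge 0$ and $\phi \ge 0$) yields $L\phi \ge \lambda$ in $\Omega$ and $0 \le \phi \le e^{\mu D}$ once $\mu$ is chosen large in terms of $\lambda$, $\|b^i\|_{L^\infty}$, and $\|\partial_{x_i}a^{ij}\|_{L^\infty}$; enlarging $\mu$ further, I would verify that $B\phi \ge 0$ on $\Gamma$, the leading $\mu^2$-order tangential second-order term dominating the lower-order contributions. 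With $K_1 := \sup_\Omega |f|/\lambda$ and $w := v - K_1 \phi$, the construction delivers $Lw \le 0$ in $\Omega$, $w \le 0$ on $S$, and $Bw \le 0$ on $\Gamma$.

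I claim $w \le 0$ throughout $\overline\Omega$. If instead $\tilde M := \sup_{\overline\Omega} w > 0$, the supremum is attained at some $z \notin S$. Should $z$ lie in the interior of some $\Omega_k$, the weak maximum principle for $Lw \le 0$ with $c \ge 0$ in $\Omega_k$ forces $\tilde M \le \sup_{\partial \Omega_k} w^+ \le \sup_\Gamma w^+$ (since $w \le 0$ on $S$), so the sup is also realized at a point of $\Gamma$, which I relabel as $z$. At $z \in \Gamma$ the tangential first derivatives of $w$ vanish and the tangential Hessian is negative semidefinite, whence ellipticity of $\mathfrak{a}^{st}$ on the tangent space gives $-\sum_{s,t}\mathcal{D}_s(\mathfrak{a}^{st}\mathcal{D}_t w)(z) \ge 0$ while the tangential first-order term vanishes; since $z$ is a global maximum on both $\overline\Omega_1$ and $\overline\Omega_2$, a one-sided sign analysis of the co-normal derivative in local flat coordinates yields $\partial_N w_1(z) \ge 0 \ge \partial_N w_2(z)$ and hence $\alpha \jump{\partial_N w}(z) \ge 0$ for $\alpha = 1$. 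Collecting these inequalities gives $Bw(z) \ge \mathfrak{c}(z)\tilde M > 0$, contradicting $Bw \le 0$. Thus $w \le 0$, which rearranges to $\sup_\Omega u \le \sup_\Gamma |g/\mathfrak{c}| + C \sup_\Omega |f|/\lambda$ with $C = e^{\mu D}$.

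The main technical obstacle is verifying $B\phi \ge 0$ on $\Gamma$ uniformly: at points of $\Gamma$ where $\nabla \phi$ happens to be parallel to $\nu$, the leading tangential second-order contribution to $B\phi$ degenerates and the jump $\alpha \jump{\partial_N \phi}$ may carry either sign. To overcome this, one can replace the single exponential by a sum $\sum_{k} \bigl(e^{\mu D} - e^{\mu \mathbf{e}_k \cdot x}\bigr)$ over a finite family of directions chosen so that at every point of $\Gamma$ some $\mathbf{e}_k$ has tangential component bounded below, ensuring $B\phi > 0$ for $\mu$ sufficiently large at the expense of enlarging the constant.
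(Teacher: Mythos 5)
Your overall architecture is the same comparison argument the paper uses, and your interface analysis at a positive maximum on $\Gamma$ (vanishing tangential gradient, nonpositive tangential Hessian, $\partial_N w_1 \ge 0 \ge \partial_N w_2$, hence $\jump{\partial_N w}\ge 0$ for $\alpha=1$) is exactly right. The genuine gap is the step you yourself flag: the requirement $B\phi\ge 0$ on $\Gamma$ for the bulk barrier. In your setup this is not a removable technicality, because you fold the interior and boundary estimates into a single comparison function $w=u-K_g-K_1\phi$, and $Bw\le 0$ then forces the barrier itself to satisfy the Wentzell inequality. At points of $\Gamma$ where $e_1$ is (nearly) normal, the favorable $\mu^2$ tangential term degenerates, while $\jump{\partial_N\phi}$ is genuinely nonzero and sign-indefinite there --- recall that $a^{ij}$ is only $C^{1,\beta}$ on each $\overline{\Omega_k}$ separately, so the co-normal derivative of even a globally smooth function jumps across $\Gamma$ --- and it is of size $\mu e^{\mu x_1}$, the same order as the other uncontrolled terms. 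Your proposed repair by summing exponentials over several directions does not obviously close this: at a fixed point of $\Gamma$ the weights $e^{\mu\mathbf{e}_k\cdot x}$ for different $k$ are exponentially incomparable as $\mu\to\infty$, so the $O(\mu^2)$ gain from the direction whose tangential component is bounded below can be swamped by the $O(\mu)$ losses from a nearly normal direction whose exponential weight happens to be exponentially larger at that point.

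The paper sidesteps all of this by decoupling the two estimates. The exponential barrier is used only inside each $\Omega_k$, as a supersolution for the Dirichlet problem with boundary datum $\sup_{\partial\Omega_k}u^+$, so no Wentzell inequality is ever imposed on the barrier; this yields $\sup_{\Omega_k}u\le\sup_{\partial\Omega_k}u^+ + C\sup_{\Omega_k}f^+/\lambda$. The quantity $\sup_{\partial\Omega_k}u^+$ is then bounded by applying $Bu\le g$ directly to $u$ at a positive maximum on $\Gamma$, giving $u(x_0)\le g(x_0)/\mathfrak{c}(x_0)\le\sup_\Gamma|g/\mathfrak{c}|$. You should restructure your proof along these lines (or exhibit a barrier for which $B\phi\ge 0$ can actually be verified); as written, the argument does not go through.
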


\begin{proof}
We will follow very closely the classical arguments when proving this maximum principle in the interior. Rewriting $L$ in non-divergence form gives
\[
	Lu = -\sum_{i,j=1}^n a^{ij}\partial_{x_i}\partial_{x_j}u + \sum_{i=1}^n \tilde{b}^i \partial_{x_i}u + cu,
\]
where $\tilde{b} := b^i - \partial_{x_i}a^{ij}$. Setting 
\[
	\tau := \frac{\|\tilde{b}^i\|_{L^\infty(\Omega_1)} + \|\tilde{b}^i\|_{L^\infty(\Omega_2)}}{\lambda},
\]
choosing $\sigma \ge 1$ large enough so that $\sigma^2 - \tau\sigma \ge 1$, and without loss of generality, because of the boundedness of $\Omega$, assuming $\Omega$ lies between $\{x_1 = 0\}$ and $\{x_1 = d\}$, let
\[
	v := \sup_{\partial\Omega_k} u^+ + \left( e^{\sigma d} - e^{\sigma x_1} \right) \sup_{\Omega_k} \frac{f^+}{\lambda},
\]
where $u^+ := \max(u,0)$ and $d := \text{diam } \Omega$. Then
\[
	Le^{\sigma x_1} = (-a^{11}\sigma^2 + \tilde{b}^1\sigma + c)e^{\sigma x_1} \le -\lambda(\sigma^2 - \tau\sigma)e^{\sigma x_1} + c\,e^{\sigma x_1} \le -\lambda + c\,e^{\sigma x_1}.
\]
Then since $c \ge 0$,
\begin{align*}
	Lv &\ge c\,\sup_{\partial\Omega_k}u^+ + c\,e^{\sigma d}\sup_{\Omega_k}\frac{f^+}{\lambda} - (-\lambda + c\,e^{\sigma x_1}) \sup_{\Omega_k} \frac{f^+}{\lambda} \\
	&\ge c(e^{\sigma d} - e^{\sigma x_1}) \sup_{\Omega_k} \frac{f^+}{\lambda} + \sup_{\Omega_k} f^+ \ge \sup_{\Omega_k} f^+,
\end{align*}
so we have $L(u-v) \le 0$ in $\Omega_k$. On the other hand, by construction $u - v \le 0$ on $\partial\Omega_k$. Therefore, the maximum principle implies $u \le v$ in $\Omega_k$, and that there exists a positive constant $C = C(d, \lambda, \|\partial_{x_i}a^{ij}\|_{L^\infty}, \|b^i\|_{L^\infty})$ such that
\[
	\sup_{\Omega_k} u \le  \sup_{\partial\Omega_k} u^+ + C \sup_{\Omega_k} \frac{f^+}{\lambda} \quad \text{for } k = 1,2.
\]
Next, since $u|_S = 0$, if $u|_\Gamma \le 0$ for all $x \in \Gamma$, then
\[
	\sup_{\partial\Omega_k} u^+ = 0 \le \sup_\Gamma \left| \frac{g}{\mathfrak{c}} \right|.
\]
If we suppose that $u$ attains its local maximum at some point $x_0 \in \Gamma$ and $u(x_0) > 0$, then by the positive definiteness of the matrix $(\mathfrak{a}^{st})$,
\[
	\nabla' u(x_0) = 0 \quad \text{and} \quad \sum_{s,t=1}^{n - 1} (\mathfrak{a}^{st}\partial_{x_s}\partial_{x_t}u)(x_0) \le 0.
\]
By the positive-definiteness of the matrix $(a^{ij})$, we have
\begin{align*}
	\partial_N u_1(x_0) &= -(a^{nn}\partial_{x_n}u_1)(x_0) \ge 0 \\
	\partial_N u_2(x_0) &= -(a^{nn}\partial_{x_n}u_2)(x_0) \le 0,
\end{align*}
and hence $\jump{\partial_N u(x_0)} \ge 0$.  Then the condition on $\Gamma$ gives
\[
	(\mathfrak{c} u)(x_0) \le g(x_0) + \sum_{s,t=1}^{n-1}(\mathfrak{a}^{st}\partial_{x_s}\partial_{x_t}u)(x_0) - \jump{\partial_N u(x_0)} \le g(x_0),
\]
so since $\mathfrak{c} > 0$ for all $x \in \Gamma$, we obtain
\[
	\sup_\Gamma u = u(x_0) \le \frac{g(x_0)}{\mathfrak{c}(x_0)} \le \sup_\Gamma \frac{g}{\mathfrak{c}}.
\]
Therefore,
\[
	\sup_{\partial\Omega_k} u^+ = \sup_\Gamma u \le \sup_\Gamma \left| \frac{g}{\mathfrak{c}} \right|,
\]
and hence we obtain the desired estimate \eqref{a priori est wrt f and g} by using $\dsp \sup_\Omega u = \max(\sup_{\Omega_1} u, \sup_{\Omega_2} u)$.
\end{proof}

\begin{remark} \label{Remark Omega periodic Hold holds}
Note that if $\Omega$ is periodic in one variable, the lemma still holds by modifying the proof to assume that $\Omega$ lies between two hyperplanes parallel to the periodic direction.  
\end{remark}

Using the notation of a H\"older seminorm, we have the following simple lemma whose proof will be omitted:

\begin{lemma} \label{Lemma showing u in C^0,beta}
Suppose $u \in C^0(\overline{\Omega}) \cap C^{0,\beta}(\overline{\Omega_1}) \cap C^{0,\beta}(\overline{\Omega_2})$. Then $[u]_{0,\beta;\overline{\Omega}}$ is finite, and
\[
	\|u\|_{C^{0,\beta}(\Omega)} \le C\big( \|u_1\|_{C^{0,\beta}(\Omega_1)} + \|u_2\|_{C^{0,\beta}(\Omega_2)} \big).
\]
\end{lemma}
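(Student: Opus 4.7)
The plan is a direct case split on the relative positions of $x, y \in \overline{\Omega}$. The sup-norm contribution $\|u\|_{C^0(\Omega)} \le \|u_1\|_{C^0(\Omega_1)} + \|u_2\|_{C^0(\Omega_2)}$ is immediate, so the entire content of the lemma is to bound the H\"older seminorm $[u]_{0,\beta;\overline{\Omega}}$ by a constant multiple of $\|u_1\|_{C^{0,\beta}(\Omega_1)} + \|u_2\|_{C^{0,\beta}(\Omega_2)}$.

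If $x$ and $y$ lie in the same $\overline{\Omega_k}$, then $|u(x) - u(y)|/|x-y|^\beta \le [u_k]_{0,\beta;\overline{\Omega_k}}$ with nothing to do, and continuity of $u$ across $\Gamma$ allows us to treat boundary points as interior ones for this purpose. The only substantive case is $x \in \overline{\Omega_1}$ and $y \in \overline{\Omega_2}$. For such a pair I would insert an intermediate point $z \in \Gamma$ and use the triangle inequality together with $u_1(z) = u_2(z)$ to obtain
\[
|u(x) - u(y)| \;\le\; [u_1]_{0,\beta;\overline{\Omega_1}} |x-z|^\beta + [u_2]_{0,\beta;\overline{\Omega_2}} |z-y|^\beta.
\]
Provided $z$ can be chosen with $\max\{|x-z|,|z-y|\} \le K |x-y|$ for some constant $K = K(\Omega)$, the lemma follows at once with $C = 2K^\beta$ (plus the sup-norm contribution absorbed into $\|u_k\|_{C^{0,\beta}(\Omega_k)}$).

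The main obstacle is therefore the geometric selection of $z$, which is a chord-arc type condition across the interface. I would handle it by a dichotomy. Using compactness of $\Gamma$ together with the $C^{2,\beta}$ regularity of the domain, fix a tubular neighborhood of $\Gamma$ and a finite atlas of flattening charts of the kind introduced at the start of Section \ref{elliptic section}. For $|x-y|$ smaller than a threshold $\delta_0 = \delta_0(\Omega)$, the signed-distance function to $\Gamma$ satisfies $\mathrm{dist}(x,\Gamma) + \mathrm{dist}(y,\Gamma) \le |x-y|$, so both $x$ and $y$ lie inside a common chart; pulling back to the half-space model, $\Gamma$ becomes $\{x_n = 0\}$ and one may take $z$ to be the image of the orthogonal projection of $x$, yielding $\max\{|x-z|,|z-y|\} \le K|x-y|$ with $K$ determined by the finitely many Lipschitz constants of the charts. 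For $|x-y| \ge \delta_0$ the ratio $|u(x) - u(y)|/|x-y|^\beta \le 2\delta_0^{-\beta}\|u\|_{C^0(\Omega)}$ is absorbed directly into the sup-norm term. Combining the two regimes yields the estimate.
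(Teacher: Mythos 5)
Your argument is correct and is precisely the standard proof that the paper chooses to omit ("whose proof will be omitted"): split off the same-region pairs, handle distant pairs via the sup norm, and for nearby cross-interface pairs insert a point $z\in\Gamma$ using continuity $u_1=u_2$ on $\Gamma$ together with a chord-arc bound $|x-z|+|z-y|\le K|x-y|$ furnished by the flattening charts. The only imprecision is the intermediate claim $\mathrm{dist}(x,\Gamma)+\mathrm{dist}(y,\Gamma)\le|x-y|$, which with constant $1$ requires the segment $[x,y]$ to stay in $\overline{\Omega}$ and cross $\Gamma$; but since you only use it to place $x$ and $y$ in a common chart, the weaker fact that both distances tend to $0$ uniformly as $|x-y|\to 0$ (from compactness and $\overline{\Omega_1}\cap\overline{\Omega_2}=\Gamma$) suffices, so the proof stands.
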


Now we can derive the existence and uniqueness of solution in H\"older spaces.
\begin{proof}[Proof of Theorem \ref{Theorem Existence and uniqueness of C^(2,beta) with c >= 0}]
Consider the family of problems indexed by $\theta \in [0, 1]$:
\begin{align} \label{family theta problems} \begin{cases}
	Lu & = \quad f \quad \text{in } \Omega, \\
	u & = \quad 0 \quad \text{on } S, \\
	\jump{u} & = \quad 0 \quad \text{on } \Gamma, \\
	B_\theta u & = \quad g \quad \text{on } \Gamma,
\end{cases} \end{align}
where $B_\theta u = \theta Bu + (1-\theta)B'u$ and
\[
	B'u := -\sum_{s=1}^{n-1}\partial_{x_s}^2 u + u.
\]
We note that $B_1 = B$, $B_0 = B'$, and that
\[
	B_\theta u = -\sum_{s,t=1}^{n-1} \tilde{\mathfrak{a}}^{st} \partial_{x_s}\partial_{x_t}u + \sum_{s=1}^{n-1} \tilde{\mathfrak{b}}^s \, \partial_{x_s}u + \theta\jump{\partial_N u} + \tilde{\mathfrak{c}}u,
\]
where all of the coefficients $\tilde{\mathfrak{a}}^{st}$, $\tilde{\mathfrak{b}}^s$, $\tilde{\mathfrak{c}}$ of $B_\theta$ are bounded in $C^{0,\beta}(\Gamma)$ independently of $\theta$ with $\tilde{\mathfrak{c}} > 0$ and
\[
	\min(1,\mu)|\xi|^2 =: \mu_\theta |\xi|^2 \le \tilde{\mathfrak{a}}^{st} \xi_s \xi_t \quad \text{for all } x \in \Gamma, \: \xi \in \mathbb{R}^{n-1}.
\]
Consider any solution $u \in C^0(\overline{\Omega}) \cap C^{2,\beta}(\overline{\Omega_1}) \cap C^{2,\beta}(\overline{\Omega_2})$ of \eqref{family theta problems}. Then by estimates \eqref{Estimate Theorem A priori Holder} and \eqref{a priori est wrt f and g}, the following inequality holds
\begin{align} \label{est u for method of cont}
	\|u_1\|_{C^{2,\beta}(\Omega_1)} + \|u_2\|_{C^{2,\beta}(\Omega_2)} \le C \left( \|f\|_{C^{0,\beta}(\Omega_1)} + \|f\|_{C^{0,\beta}(\Omega_2)} + \|g\|_{C^{0,\beta}(\Gamma)} \right),
\end{align}
where the constant $C$ is independent of $\theta$. Note that the above estimate is valid for $\Omega_k$ with $k = 1,2$ since $S \cap \Gamma = \emptyset$.

Next, recalling the definition of $X$ as in \eqref{defn X}, let $Y = Y_1 \times Y_2$ where
\begin{align*}
	Y_1 = C^{0,\beta}(\overline{\Omega_1}) \cap C^{0,\beta}(\overline{\Omega_2}), \quad \text{and} \quad Y_2 = C^{0,\beta}(\Gamma).
\end{align*}
Then $Y$ is a Banach space with respect to the norm
\begin{align*}
	\|(f,g)\|_Y &:= \|f\|_{Y_1} + \|g\|_{Y_2} := \|f\|_{C^{0,\beta}(\Omega_1)} + \|f\|_{C^{0,\beta}(\Omega_2)} + \|g\|_{C^{0,\beta}(\Gamma)}.
\end{align*}
Thus, problem \eqref{family theta problems} can be written as
\[
	\mathfrak{L}_\theta u := (Lu, B_\theta u) = (f,g),
\]
where $\mathfrak{L}_\theta: X \to Y$, so the solvability of the problem \eqref{family theta problems} for arbitrary $f \in C^{0,\beta}(\overline{\Omega_1}) \cap C^{0,\beta}(\overline{\Omega_2})$ and $g \in C^{0,\beta}(\Gamma)$ is then equivalent to the invertibility of the mapping $\mathfrak{L}_\theta$. We note that $\mathfrak{L}_0$ and $\mathfrak{L}_1$ are bounded operators.

On the other hand, by Lemma \ref{Lemma showing u in C^0,beta}, Lemma \ref{Lemma Max Principle elliptic theory}, and estimate \eqref{est u for method of cont}, we have
\begin{align*}
	\|u\|_{C^{0,\beta}(\Omega)} &\le C \left( \|u\|_{C^{0,\beta}(\Omega_1)} + \|u\|_{C^{0,\beta}(\Omega_2)} \right) \\
	&\le C_\epsilon \|u\|_{C^0(\Omega)} + \epsilon \left( \|u_1\|_{C^{2,\beta}(\Omega_1)} + \|u_2\|_{C^{2,\beta}(\Omega_2)} \right) \\
	&\le C\left( \|g\|_{C^{0,\beta}(\Gamma)} + \|f\|_{C^{0,\beta}(\Omega_1)} + \|f\|_{C^{0,\beta}(\Omega_2)} \right)
\end{align*}
for some $\epsilon > 0$, and hence
\begin{align*}
	\|u\|_X &= \|u_1\|_{C^{2,\beta}(\Omega_1)} + \|u_2\|_{C^{2,\beta}(\Omega_2)} + \|u\|_{C^{0,\beta}(\Omega)} \\
	&\le C\left( \|f\|_{C^{0,\beta}(\Omega_1)} + \|f\|_{C^{0,\beta}(\Omega_2)} + \|g\|_{C^{0,\beta}(\Gamma)} \right) \\
	&= C\left( \|f\|_{Y_1} + \|g\|_{Y_2} \right) = C \|\mathfrak{L}_\theta u\|_Y,
\end{align*}
where the constant $C$ does not depend on $\theta$. Thus, by the method of continuity (see, for example, \cite[Theorem 5.2]{Gilbarg_Trudinger2001}), the surjectivity of $\mathfrak{L}_1$, which we are investigating, is equivalent to that of $\mathfrak{L}_0$ which is the problem
\begin{align} \label{L_0 problem} \begin{cases}
	Lu & = \quad f \quad \text{in } \Omega, \\
	u & = \quad 0 \quad \text{on } S, \\
	\jump{u} & = \quad 0 \quad \text{on } \Gamma, \\
	B' u & = \quad g \quad \text{on } \Gamma.
\end{cases} \end{align}
Finally, we recall that 
\[
	B'u = -\sum_{s=1}^{n-1} \partial_{x_s}^2 u + u
\]
is invertible on $\Gamma$. If $\varphi \in C^{2,\beta}(\Gamma)$ is the unique solution to $B' \varphi = g$ on $\Gamma$ for a given $g \in C^{0,\beta}(\Gamma)$, then by \cite[Lemma 6.38]{Gilbarg_Trudinger2001} we can make an extension to have $\varphi \in C^{2,\beta}(\overline{\Omega_1}) \cap C^{2,\beta}(\overline{\Omega_2})$. Now we have a Dirichlet problem
\[
	Lu_k = f \quad \text{in } \Omega_k, \qquad u_k = 0 \quad \text{on } S, \qquad u_k = \varphi \quad \text{on } \Gamma,
\]
which has a unique solution $u_k \in C^{2,\beta}(\overline{\Omega_k})$ by \cite[Theorem 6.14]{Gilbarg_Trudinger2001}. Therefore, by Lemma \ref{Lemma showing u in C^0,beta}, we conclude that there is a unique solution in $C^{0,\beta}(\overline{\Omega}) \cap C^{2,\beta}(\overline{\Omega_1}) \cap C^{2,\beta}(\overline{\Omega_2})$ to the system \eqref{Main Eqn}.
\end{proof}

\begin{remark} \label{Remark Fredholm index 0}
As a consequence of Theorem \ref{Theorem Existence and uniqueness of C^(2,beta) with c >= 0}, we see that $\mathfrak{L}_1 = (L, B)$ is a Fredholm operator of index $0$ despite the sign of the transmission term.  Indeed, for $\theta \in [0, 1]$, consider the following linear operator
\[
	\widetilde{\mathfrak{L}}_\theta := \left( Lu, (1-\theta) B + \theta \widetilde{B} \right),
\]
where $\widetilde{\mathfrak{L}}_\theta: X \to Y$ and
\[
	\widetilde{B}u = -\sum_{s,t=1}^{n-1} \partial_{x_s}\left(\mathfrak{a}^{st}\partial_{x_t}u\right) - \jump{\partial_N u} + \sum_{s=1}^{n-1} \mathfrak{b}^s \partial_{x_s}u + \mathfrak{c} u
\]
with coefficients $\mathfrak{a}^{st}$, $\mathfrak{b}^s$, and $\mathfrak{c}$ satisfying the hypotheses of Theorem \ref{Theorem Existence and uniqueness of C^(2,beta) with c >= 0}.  Note that the sign of the transmission term is unfavorable.  It is clear that the map $\theta \mapsto \widetilde{\mathfrak{L}}_\theta \in \mathcal{L}(X, Y)$ is continuous.  Then Schauder estimate from Theorem \ref{Theorem A priori estimate} and Remark \ref{Remark sign jump co-normal term} give
\begin{align*}
	&\|u_1\|_{C^{2,\beta}(\Omega_1)} + \|u_2\|_{C^{2,\beta}(\Omega_2)} + \|u\|_{C^{0,\beta}(\Omega)} \\
	&\le C \| \widetilde{\mathfrak{L}}_\theta u \|_Y + C_\epsilon \|u\|_{C^0(\Omega)} + \epsilon \left( \|u_1\|_{C^{2,\beta}(\Omega_1)} + \|u_2\|_{C^{2,\beta}(\Omega_2)} \right)
\end{align*}
for some small $\epsilon > 0$, so
\[
	(1-\epsilon) \left( \|u_1\|_{C^{2,\beta}(\Omega_1)} + \|u_2\|_{C^{2,\beta}(\Omega_2)} \right) + \|u\|_{C^{0,\beta}(\Omega)} \le C_\epsilon \|u\|_{C^0(\Omega)} + C\| \widetilde{\mathfrak{L}}_\theta u \|_Y.
\]
Choosing $\epsilon>0$ small, we have
\[
	\|u\|_X \le C \left( \|u\|_{C^0(\Omega)} + \|\widetilde{\mathfrak{L}}_\theta u\|_Y \right)
\]
for some constant $C>0$ independent of $\theta$, which implies that $\widetilde{\mathfrak{L}}_\theta$ has finite dimensional null space and closed range.  Thus, $\widetilde{\mathfrak{L}}_\theta$ is semi-Fredholm.  If $\theta < \frac{1}{2}$, the map $\widetilde{\mathfrak{L}}_\theta$ is invertible by Theorem \ref{Theorem Existence and uniqueness of C^(2,beta) with c >= 0} and hence has index $0$.  By the continuity of the index, it also holds for $\theta \ge \frac{1}{2}$, which means that we have Fredholm index $0$ regardless of the sign of the transmission term.  
\end{remark}

\subsection{Fredholm property}
In light of Remark \ref{Remark Fredholm index 0}, it suffices to take $\alpha = +1$. To simplify our notation, we write $\mathfrak{L}$ for $\mathfrak{L}_1$, which is the problem we are considering. If $L$ and $B$ do not satisfy the conditions $c \ge 0$ and $\mathfrak{c} > 0$, it is still possible to assert a Fredholm alternative, which we formulate as in Theorem \ref{Theorem Fredholm solvability}.
\begin{proof}[Proof of Theorem \ref{Theorem Fredholm solvability}]
For all $\sigma, \tau \in \mathbb{R}$, notice that for $u \in X$, $(f, g) \in Y$,
\[
	\mathfrak{L}u = (f,g)
\]
is equivalent to
\[
	\mathfrak{L}_{\sigma,\tau} u = (f + \sigma u, g + \tau u),
\]
where $\mathfrak{L}_{\sigma,\tau} u := \big( (L+\sigma) u, (B + \tau) u \big)$. From Theorem \ref{Theorem Existence and uniqueness of C^(2,beta) with c >= 0}, the mapping $\mathfrak{L}_{\sigma, \tau} u: X \to Y$ is invertible for $\sigma$ and $\tau$ sufficiently large. Now, applying $\mathfrak{L}^{-1}_{\sigma,\tau}$ to both sides, we obtain
\[
	u = \mathfrak{L}_{\sigma, \tau}^{-1}(f + \sigma u, g + \tau u|_\Gamma)
\]
which can be written as
\[
	u - \mathfrak{L}^{-1}_{\sigma, \tau} (\sigma u, \tau u|_\Gamma) = \mathfrak{L}^{-1}_{\sigma, \tau}(f,g).
\]
Letting $\mathcal{K} u: u \in X \subset Y_1 \mapsto \mathfrak{L}^{-1}_{\sigma, \tau}(\sigma u, \tau u|_\Gamma) \in Y_1$, and $h:=\mathfrak{L}^{-1}_{\sigma, \tau}(f,g)$, the equation becomes
\begin{align} \label{I-K eqn}
	(I - \mathcal{K})u = h.
\end{align}
We claim that $\mathcal{K}$ is a compact operator. Let $\{(f_m, g_m)\} \subset Y$ be bounded, and define $u_m := \mathcal{K}(f_m, g_m) \in Y_1$. We want to show that $\{u_m\}$ has a convergent subsequence in $Y_1$. By definition of $u_m$ and $\mathcal{K}$, we have
\[
	\begin{cases} Lu_m + \sigma u_m = f_m &\text{in } \Omega \\ Bu_m + \tau u_m = g_m &\text{on } \Gamma, \end{cases}
\]
where $u_m \in X$, $f_m \in Y_1$, $g_m \in Y_2$. Thus, by Theorem \ref{Theorem A priori estimate}, there exists a positive constant $C=C(n,\beta,L,B,\lambda,\mu)$ such that
\begin{align} \label{Estimate u_m}
	\|u_m\|_{C^{2,\beta}(\Omega_1)} + \|u_m\|_{C^{2,\beta}(\Omega_2)} \le C \big( & \|u_m\|_{C^0(\Omega)} \\ + & \|f_m\|_{C^{0,\beta}(\Omega_1)} + \|f_m\|_{C^{0,\beta}(\Omega_2)}
	+ \|g_m\|_{C^{0,\beta}(\Gamma)} \big). \nonumber
\end{align}
Note that the estimate holds for $\Omega_k$ since $S \cap \Gamma = \emptyset$. Since $C^{0,\beta}(\overline{\Omega}) \subset\subset C^0(\overline{\Omega})$ and $C^{2,\beta}(\overline{\Omega_k}) \subset\subset C^{0,\beta}(\overline{\Omega_k})$, $k = 1, 2$, using estimates as in the proof of Theorem \ref{Theorem Existence and uniqueness of C^(2,beta) with c >= 0}, we find that
\begin{align*}
	\|u_m\|_{C^0(\Omega)} \le C \|u_m\|_{C^{0,\beta}(\Omega)} & \le C \big( \|f_m\|_{C^{0,\beta}(\Omega_1)} + \|f_m\|_{C^{0,\beta}(\Omega_2)} + \|g_m\|_{C^{0,\beta}(\Gamma)} \big).
\end{align*}
Then the inequality \eqref{Estimate u_m} becomes
\[
	\|u_m\|_{C^{2,\beta}(\Omega_1)} + \|u_m\|_{C^{2,\beta}(\Omega_2)} \le C \big( \|f_m\|_{C^{0,\beta}(\Omega_1)} + \|f_m\|_{C^{0,\beta}(\Omega_2)} + \|g_m\|_{C^{0,\beta}(\Gamma)} \big),
\]
or we can write this to be
\[
	\|u_m\|_{C^{2,\beta}(\Omega_1)} + \|u_m\|_{C^{2,\beta}(\Omega_2)} + \|u_m\|_{C^{0,\beta}(\Omega)} \le C \big( \|f_m\|_{C^{0,\beta}(\Omega_1)} + \|f_m\|_{C^{0,\beta}(\Omega_2)} + \|g_m\|_{C^{0,\beta}(\Gamma)} \big),
\]
which is equivalent to
\[
	\|u_m\|_X \le C \|(f_m, g_m)\|_Y,
\]
so $\|u_m\|_X$ is bounded in $X$. Since $X \subset\subset Y_1$, we conclude that $\{u_m\}$ contains a subsequence $\{u_{m_k}\}$ such that $u_{m_k} \to u$ in $Y_1$, which proves the claim that $\mathcal{K}$ is a compact operator. \\
Applying the Fredholm Alternative, equation \eqref{I-K eqn} always has a solution $u\in X$ provided the homogeneous equation $(I-\mathcal{K})u=0$ has only the trivial solution $u=0$. When this condition is not satisfied, the kernel of $I-\mathcal{K}$ is a finite dimensional subspace of $Y_1$. Since the solutions of (\ref{I-K eqn}) are in one-to-one correspondence to the solutions of (\ref{Main Eqn}), we therefore can conclude the alternative stated in the theorem.
\end{proof}

Finally, the last result in this subsection gives H\"older continuity for a classical solution provided sufficient smoothness of the data and coefficients.  

\begin{prop} \label{Theorem elliptic higher regularity}
Suppose the coefficients $a^{ij}, b^i, c, \mathfrak{a}^{st}, \mathfrak{b}^s, \mathfrak{c}$ exhibit the same regularity as in Theorem \ref{Theorem A priori estimate}. If $u \in C^0(\overline\Omega) \cap C^2(\overline{\Omega_1}) \cap C^2(\overline{\Omega_2})$ is a solution to equation \eqref{Main Eqn} with $\alpha = \pm 1$ for $f \in C^{0, \beta}(\overline{\Omega_1}) \cap C^{0, \beta}(\overline{\Omega_2})$ and $g \in C^{0, \beta}(\Gamma)$, then $u \in C^{0,\beta}(\overline{\Omega}) \cap C^{2,\beta}(\overline{\Omega_1}) \cap C^{2,\beta}(\overline{\Omega_2})$.
\end{prop}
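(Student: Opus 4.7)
The plan is to bootstrap from classical $C^2$ regularity on each side of $\Gamma$ to full $C^{2,\beta}$ regularity by reusing the $(n-1)$-dimensional elliptic structure of $B$ on $\Gamma$ that drives the proof of Theorem \ref{Theorem A priori estimate}. First, in each $\Omega_k$ the equation $Lu = f$ has a $C^{0,\beta}$ right-hand side and coefficients of the hypothesized regularity, so standard interior Schauder regularity (see, for example, \cite{Gilbarg_Trudinger2001}) yields $u \in C^{2,\beta}_{\mathrm{loc}}(\Omega_k)$, and on any portion of $S$ positively separated from $\Gamma$ the Dirichlet condition $u|_S = 0$ combined with boundary Schauder theory gives $C^{2,\beta}$ regularity up to that piece of $S$.

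The main obstacle is gaining $C^{2,\beta}$ regularity up to $\Gamma$, where the nonlocal term $\alpha\jump{\partial_N u}$ prevents the Wentzell condition from being a standard boundary condition for either $u_1$ or $u_2$ individually. To handle this, I would flatten $\Gamma$ via the $C^{2,\beta}$ change of variables introduced at the opening of Section \ref{elliptic section}, and then observe that the hypothesis $u_k \in C^2(\overline{\Omega_k})$ together with $a^{nj} \in C^{1,\beta}(\overline{\Omega_k})$ already forces $\partial_N u_k|_\Gamma \in C^1(\Gamma)$, so that $\jump{\partial_N u} \in C^1(\Gamma) \subset C^{0,\beta}(\Gamma)$. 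Moving this term to the right-hand side recasts the Wentzell condition as a genuine $(n-1)$-dimensional elliptic equation for $u|_\Gamma$,
\[
    -\sum_{s,t=1}^{n-1} \partial_{x_s}\bigl(\mathfrak{a}^{st}\partial_{x_t} u\bigr) + \sum_{s=1}^{n-1} \mathfrak{b}^s\, \partial_{x_s} u + \mathfrak{c}\, u \;=\; g - \alpha \jump{\partial_N u} \quad \text{on } \Gamma,
\]
whose right-hand side lies in $C^{0,\beta}(\Gamma)$ and whose coefficients satisfy the hypotheses of Theorem \ref{Theorem A priori estimate}; since $u|_\Gamma$ is classically $C^2$, interior Schauder regularity for this tangential equation then promotes it to $u|_\Gamma \in C^{2,\beta}(\Gamma)$.

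With $u|_\Gamma \in C^{2,\beta}(\Gamma)$ in hand, each $u_k$ satisfies a standard Dirichlet problem $Lu_k = f$ in $\Omega_k$ with $C^{2,\beta}$ data on $\Gamma$ and zero data on the adjacent part of $S$, so classical boundary Schauder regularity yields $u_k \in C^{2,\beta}(\overline{\Omega_k})$ in a neighborhood of $\Gamma$; combined with the interior and $S$-boundary regularity from the first step, this gives $u_k \in C^{2,\beta}(\overline{\Omega_k})$ for $k = 1, 2$, and Lemma \ref{Lemma showing u in C^0,beta} upgrades the matching condition $\jump{u} = 0$ to $u \in C^{0,\beta}(\overline{\Omega})$. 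The delicate point to verify is the first bootstrap: one must check that the a priori $C^1$ regularity of $\jump{\partial_N u}$ genuinely suffices to view the Wentzell condition as a tangential elliptic equation with $C^{0,\beta}$ data, and that the flattening step preserves the regularity of every coefficient in the reduced system, so that no derivative is lost in a hidden chain-rule step.
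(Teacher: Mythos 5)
Your proposal is correct and follows essentially the same route as the paper: observe that $u_k \in C^2(\overline{\Omega_k})$ forces $\jump{\partial_N u} \in C^1(\Gamma) \subset C^{0,\beta}(\Gamma)$, move it to the right-hand side so the Wentzell condition becomes a tangential elliptic equation with $C^{0,\beta}$ data, conclude $u|_\Gamma \in C^{2,\beta}(\Gamma)$ by standard Schauder theory, and then solve the resulting Dirichlet problems in each $\Omega_k$ and invoke Lemma \ref{Lemma showing u in C^0,beta}. The extra care you take with flattening $\Gamma$ and with interior/$S$-boundary regularity is consistent with, and slightly more detailed than, the paper's argument.
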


\begin{proof}
By the hypothesis, we have $u_k \in C^2(\Gamma)$, and hence, $\partial_N u_k \in C^1(\Gamma) \subset\subset C^{0, \beta}(\Gamma)$ for $k = 1,2$. Thus, the boundary condition $Bu = g$ can be re-expressed as
\[
	-\sum_{s,t=1}^{n-1} \partial_{x_s}\left( \mathfrak{a}^{st} \partial_{x_t} u \right) + \sum_{s=1}^{n-1} \mathfrak{b}^s \partial_{x_s}u + \mathfrak{c} u = h \quad \text{on } \Gamma,
\]
where $h := g - \alpha \jump{\partial_N u} \in C^{0, \beta}(\Gamma)$. By standard elliptic regularity theory, $u|_\Gamma \in C^{2, \beta}(\Gamma)$. Now the Dirichlet problem
\[
	Lu_k = f \quad \text{in } \Omega_k, \qquad u_k = 0 \quad \text{on } S, \qquad u_k = u|_\Gamma \quad \text{on } \Gamma
\]
has a unique solution $u \in C^{2, \beta}(\overline{\Omega_1}) \cap C^{2, \beta}(\overline{\Omega_2})$. Using the fact that $C^{2, \beta}(\overline{\Omega_k}) \subset \subset C^{0, \beta}(\overline{\Omega_k})$ and Lemma \ref{Lemma showing u in C^0,beta}, we conclude that $u \in C^{2, \beta}(\overline{\Omega_1}) \cap C^{2, \beta}(\overline{\Omega_2}) \cap C^{0, \beta}(\Omega)$.
\end{proof}

\begin{remark} \label{Remark sign of B switched}
	If we change the boundary term $B$ to
	\[
	\widehat{B}u := \sum_{s,t=1}^{n-1} \partial_{x_s}\left(\mathfrak{a}^{st}\partial_{x_t}u\right) + \alpha \jump{\partial_N u} + \sum_{s=1}^{n-1} \mathfrak{b}^s \partial_{x_s}u + \mathfrak{c} u,
	\]
	where the signs of the second-order term is switched, we obtain the same results as in Theorems \ref{Theorem A priori estimate}, \ref{Theorem Fredholm solvability}, and Proposition \ref{Theorem elliptic higher regularity}.  For Lemma \ref{Lemma Max Principle elliptic theory} and Theorem \ref{Theorem Existence and uniqueness of C^(2,beta) with c >= 0} to be valid, we have to assume in addition that $\alpha = -1$ and $\mathfrak{c} < 0$, which means the signs of the second-order term and zeroth-order term must be opposite.  
\end{remark}

\section{Steady capillary-gravity waves in the presence of wind} \label{existence section}
In this section, we will apply the results found above to investigate the existence of steady wind-driven water waves. There exists a well-known change of variables due to Dubreil-Jacotin that maps $\Omega$ to a strip (see \cite{Dubreil-Jacotin1934}). We change variables $(x,y) \in \Omega \mapsto (x, -\psi) =: (q,p) \in D$. We recall that $\psi$ is the (relative) pseudostream function for the flow defined by \eqref{pseudostream function defn}, along with the boundary conditions $\psi=0$ on the upper lid, $\psi=-p_0$ at the bed, and $\psi \in C^{0,\alpha}(\overline{\Omega}) \cap C^{2,\alpha}(\overline{\Omega_1}) \cap C^{2,\alpha}(\overline{\Omega_2})$ for a fixed $\alpha \in (0,1)$. Thus, the problem is now posed in a union of rectangles $D = D_1 \cup D_2 \subset \mathbb{R}^2$, where the air region is mapped to
\[
	D_1 := \{(q,p) \in D: 0 < q < 2\pi, p_1 < p < 0 \},
\]
and the water region is mapped to
\[
	D_2 := \{(q,p) \in D: 0 < q < 2\pi, p_0 < p < p_1 \}.
\]
With that in mind, we have definitions for the lid, the free surface, and the ocean bed respectively as follows
\[
	T := \{p=0\}, \qquad I := \{p=p_1\}, \qquad B := \{p=p_0\}.
\]
Under this change of coordinates, the Euler problem \eqref{Eulerian eqn 1}$-$\eqref{wo stagnation} becomes the following \emph{height equation}
\begin{equation} \label{height equation air vorticity}
\begin{dcases}
	(1 + h_q^2)h_{pp} + h_{qq}h^2_p - 2h_p h_q h_{pq} = -\gamma(-p) h_p^3 & \text{in } D_1, \\
	(1 + h_q^2)h_{pp} + h_{qq}h^2_p - 2h_p h_q h_{pq} = 0 & \text{in } D_2, \\
	\dsp \jump{\frac{1 + h_q^2}{h_p^2}} + 2g\jump{\rho}h - Q + \sigma \frac{h_{qq}}{(1 + h_q^2)^{3/2}} = 0 & \text{on } p = p_1, \\
	\jump{h} = 0 & \text{on } p = p_1, \\
	h = 0 & \text{on } p = p_0, \\
	h = \ell + d(h) & \text{on } p = 0,
\end{dcases}
\end{equation}
where $h(q,p)$ is the height above the bed of the point $(x,y)$, where $x = q$ and $(x,y)$ lies on $\{-\psi = p\}$, and the depth operator $d$ is defined to be
\[
	d(h) := \frac{1}{2\pi} \int_{-\pi}^{\pi} h(q, p_1) \,\mathrm{d}q.
\]
Note that $\rho$ in the above equation is for $(q,p)$-coordinates after the transformation. The equivalence of \eqref{height equation air vorticity} to the original system \eqref{Eulerian eqn 1}--\eqref{wo stagnation} can be proved following \cite[Lemma $A.2$]{Chen_Walsh2016}.

Our objective is to find solutions $(h,Q) \in \solnSpace$, where
\[
	\solnSpace := \left( C^{2,\alpha}_{\text{per}} \left(\overline{D_1}\right) \cap C^{2,\alpha}_{\text{per}} \left(\overline{D_2}\right) \cap C^{0,\alpha}_{\text{per}} \left(\overline{D}\right) \right) \times \mathbb{R}
\]
and $h_p>0$ in $\overline D$ because of no stagnation condition \eqref{wo stagnation}. Recall that the space $C^{k,\alpha}_\mathrm{per}(\overline R)$ is the set of $C^{k,\alpha}(\overline R)$ functions that are $2\pi$-periodic and even in their first coordinate. The presence of surface tension $\sigma$ is manifested as the nonlinear second-order term in the boundary condition.

We will prove the following theorem stated in the Dubreil-Jacotin variables, which implies Theorem \ref{Theorem existence of small amplitude wind-drive water waves}.

\begin{thm}[existence] \label{Theorem local bifurcation air vorticity surface tension}
Let $p_1 < 0$, $\ell > 0$, and atmospheric vorticity function $\gamma \in C^{0,\alpha}((p_1,0))$ be given. Then there exists $\sigma_0 \ge 0$ such that for each $\sigma > \sigma_0$, there is a continuous curve $\mathcal{C}_\mathrm{loc} \subset \solnSpace$ of solution to \eqref{height equation air vorticity} with the following properties:
\begin{enumerate}[label=\roman*)]
	\item $\mathcal{C}_\mathrm{loc} := \{ \left(h(\lambda),Q(\lambda) \right) : \lambda \in (\lambda^* + \epsilon, \lambda^* - \epsilon) \}$, \\ where $\lambda \in (\lambda^* + \epsilon, \lambda^* - \epsilon) \mapsto \left(h(\lambda),Q(\lambda) \right) \in \solnSpace$ is $C^1$.
	\item $(h(\lambda^*), Q(\lambda^*)) = (H(\lambda^*), Q(\lambda^*))$ is a laminar solution.
	\item $h(\lambda)$ is non-laminar for $\lambda \ne \lambda^*$.
\end{enumerate}
\end{thm}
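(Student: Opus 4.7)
The plan is to apply the Crandall--Rabinowitz local bifurcation theorem after recasting \eqref{height equation air vorticity} as an abstract equation $\mathcal{F}(\lambda,w,Q)=0$ whose trivial branch consists of laminar, i.e.\ $q$-independent, solutions $h=H(p;\lambda)$. First I would construct this laminar branch: setting $h_q\equiv 0$ reduces \eqref{height equation air vorticity} to a two-sided ODE in $p$ on $(p_0,0)$ with a transmission condition at $p=p_1$, and I would isolate a single free physical parameter (playing the role of the relative circulation used in \cite{Buhler_Shatah_Walsh2013}) as the bifurcation parameter $\lambda$, so that $(H(\cdot;\lambda),Q(\lambda))$ depends smoothly on $\lambda$. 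Writing $h=H(\cdot;\lambda)+w$, the system takes the form $\mathcal{F}(\lambda,w,Q)=0$ for a $C^1$ map $\mathcal{F}:\mathbb{R}\times \mathcal{U}\times \mathbb{R}\to Y$, where $\mathcal{U}$ is a neighbourhood of the origin in the Banach space $X$ of \eqref{defn X} adapted to $D=D_1\cup D_2$ (with the even-periodic structure in $q$ and the appropriate conditions on $T\cup B$), and $Y$ is the matching H\"older target space.

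Next I would linearise. Differentiating $\mathcal{F}$ in $(w,Q)$ at $(\lambda,0,Q(\lambda))$ gives exactly the type of boundary value problem analysed in Section \ref{elliptic section}: a second-order linear elliptic equation in each $D_k$, the Dirichlet-type conditions on $T\cup B$, the transmission condition $\jump{w}=0$ on $I$, and a Wentzell boundary condition on $I$ whose second-order tangential part is proportional to $\sigma\,\partial_q^2 w$ (from linearising $\sigma h_{qq}/(1+h_q^2)^{3/2}$) and whose transmission part is a jump of the co-normal derivative arising from linearising $\jump{(1+h_q^2)/h_p^2}$. Theorem \ref{Theorem Fredholm solvability} together with Remark \ref{Remark Fredholm index 0} then guarantees that this linearised operator is Fredholm of index zero regardless of the sign of the transmission term, and Proposition \ref{Theorem elliptic higher regularity} ensures that any kernel element enjoys the required H\"older regularity. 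Bifurcation can therefore occur precisely at those $\lambda$ where the kernel becomes nontrivial.

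The crux of the argument is to locate a value $\lambda^*$ at which the kernel of the linearised operator is exactly one-dimensional and the Crandall--Rabinowitz transversality condition holds. Using the even-periodicity in $q$ I would expand any kernel element as $w=\sum_{n\ge 1}\varphi_n(p)\cos(nq)$, which decouples the linear problem into a sequence of Sturm--Liouville problems for $\varphi_n$ on $(p_0,0)$ with a transmission condition at $p_1$ whose coefficient contains the term $\sigma n^2$ coming from surface tension. The associated quadratic form is \emph{indefinite}, because the jump terms carry opposite signs in the air and water layers (the unfavourable transmission sign of Remark \ref{Remark sign jump co-normal term}), so the spectral analysis cannot be carried out in a Hilbert space alone; I would instead work in a Pontryagin space, following the strategy Wahl\'en used for a single-fluid capillary-gravity problem \cite{Wahlen_gravity2006,Wahlen2006}, with the finite negative index supplied by the atmospheric layer. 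This Pontryagin-space Sturm--Liouville analysis is the main obstacle. The key observation making it tractable is that, for $\sigma$ above a threshold $\sigma_0$, the $\sigma n^2$ contribution pushes the eigenvalues of every mode $n\ge 2$ uniformly away from zero, eliminating Wilton-ripple-type multiple bifurcations, so only the mode $n=1$ can produce a kernel element $\dot h^*(q,p)=\varphi_1^*(p)\cos q$. A monotone dependence on $\lambda$ of the principal eigenvalue of the $n=1$ Sturm--Liouville problem then yields the transversality condition $\mathcal{F}_{\lambda(w,Q)}(\lambda^*,0,Q(\lambda^*))[\dot h^*]\notin\operatorname{range}\mathcal{F}_{(w,Q)}(\lambda^*,0,Q(\lambda^*))$.

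With simplicity of the kernel and transversality established, the Crandall--Rabinowitz theorem supplies a $C^1$ curve
\[
	\mathcal{C}_\mathrm{loc}=\{(h(\lambda),Q(\lambda)):|\lambda-\lambda^*|<\epsilon\}\subset \solnSpace
\]
of solutions to \eqref{height equation air vorticity} passing through $(H(\cdot;\lambda^*),Q(\lambda^*))$, which gives assertions (i) and (ii). Property (iii) is read off from the expansion $h(\lambda)-H(\cdot;\lambda^*)=(\lambda-\lambda^*)\varphi_1^*(p)\cos q + o(|\lambda-\lambda^*|)$ in $X$, whose leading term is genuinely $q$-dependent, so $h(\lambda)$ is non-laminar for $\lambda\neq \lambda^*$. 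Finally, pulling the curve back through the inverse Dubreil-Jacotin change of variables converts Theorem \ref{Theorem local bifurcation air vorticity surface tension} into Theorem \ref{Theorem existence of small amplitude wind-drive water waves}.
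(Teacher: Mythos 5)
Your proposal follows essentially the same route as the paper: construct the laminar branch $(H(\cdot;\lambda),Q(\lambda))$, linearize the height equation at it, invoke the Fredholm index-zero result of Section \ref{elliptic section} (Remark \ref{Remark Fredholm index 0}), decompose the kernel into cosine modes, analyze the resulting indefinite Sturm--Liouville transmission problem in a Pontryagin space \`a la Wahl\'en, and close with Crandall--Rabinowitz, reading off non-laminarity from the leading term $\varphi_1(p)\cos q$.

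The one point where your account misattributes the mechanism is the role of large $\sigma$. You claim that for $\sigma>\sigma_0$ the $\sigma n^2$ term ``pushes the eigenvalues of every mode $n\ge 2$ uniformly away from zero,'' and that this is what kills Wilton-ripple multiplicity. That is not how the paper's argument (or any easy estimate) works: the threshold in Lemma \ref{Lemma Rayleigh quotient < -n^2} for excluding mode $n$ is $\lambda^2>-a_\mathrm{min}^2-g\jump{\rho}/n+\sigma n/2$, which \emph{grows} with $n$ when $\sigma$ is large, so large $\sigma$ does not uniformly separate the higher modes. Instead, simplicity of the kernel comes from the Pontryagin-space structure: for $\lambda>\lambda_0$ the operator $K$ is positive and has a \emph{unique} negative eigenvalue $\nu(\lambda)$ (Proposition \ref{Proposition unique eigenvalue for operator K}), so once $\nu(\lambda^*)=-1$ none of $-4,-9,\dots$ can be eigenvalues at $\lambda^*$. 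The actual purpose of the size condition on $\sigma$ is to guarantee the local bifurcation condition \eqref{LBC air vorticity}, i.e.\ that $\nu$ attains the value $-1$ at some $\lambda^*>\lambda_0$ at all (for small $\sigma$ the crossing may occur, if it occurs, in the region where uniqueness of the negative eigenvalue is unavailable and double bifurcation can happen). Since you do invoke the Pontryagin analysis, the correct simplicity argument is within reach of your outline, but as written the step ``$\sigma$ large $\Rightarrow$ modes $n\ge2$ excluded'' would not survive being made precise. The remaining ingredients (monotonicity of $\nu$ in $\lambda$ giving uniqueness of $\lambda^*$ and, together with the range characterization, transversality) match the paper's Lemmas \ref{Lemma monotonicity of lambda air vorticity}--\ref{Lemma transversality air vorticity}.
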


\begin{remark}
In fact, there is a necessary and sufficient condition that we call \emph{local bifurcation condition} \eqref{LBC air vorticity}, which will be given explicit in Lemma \ref{Lemma size condition air vorticity}. In particular, \eqref{LBC air vorticity} always holds for $\sigma$ sufficiently large. When $\sigma$ is small, a local bifurcation argument can still be carried out, but the eigenvalue of the linearized problem may not be simple. In this case, a more sophisticated analysis is required (see, for example, \cite{Wahlen2006,Wahlen_gravity2006,Walsh2014}).
\end{remark}

\subsection{Laminar solutions}
We first consider \emph{laminar flows} which are solutions of the height equation \eqref{height equation air vorticity} that are independent of $q$. Physically, this entails a wave where all of the streamlines are parallel to the bed. These will serve as the trivial solution curve when we apply the Crandall-Rabinowitz theorem to obtain Theorem \ref{Theorem local bifurcation air vorticity surface tension}.

Let us define $\GammaRel$ by
\begin{equation} \label{Gamma_rel definition air vorticity}
	\partial_p(\GammaRel(p)^2) = 2\gamma(-p), \qquad \ell = \int_{p_1}^0 \frac{\mathrm{d}p}{\GammaRel(p)}.
\end{equation}

\begin{remark} \label{Remark Gamma_Rel}
$\GammaRel$ is called the \emph{(pseudo) relative circulation} and is given by
\[
	\GammaRel(p) = \frac{1}{2\pi} \int_{\{ \psi=-p \}} |\nabla \psi| \,\mathrm{d}\mathcal{H}^1,
\]
where $\mathcal{H}^1$ denotes one-dimensional Hausdorff measure.  Note that circulation around a closed loop is conserved for the time-dependent problem by Kelvin's circulation law.  For periodic domains, this includes the circulation along the streamlines $\{\psi=-p\}$.  If the waves we construct are to be viewed as generated dynamically by the wind, the circulation along each streamline must agree with the initial configuration.  
\end{remark}

For laminar flows, since $h$ does not depend on $q$, we can write $h = H(p)$, where $H$ satisfies the following ODE:
\begin{equation} \label{laminar equation air vorticity}
	\begin{dcases}
		H_{pp} = -\gamma(-p) H_p^3 & \text{in } p_1 < p < 0, \\
		H_{pp} = 0 & \text{in } p_0 < p < p_1, \\
		\jump{H_p^{-2}} + 2g \jump{\rho}H - Q = 0 & \text{on } p = p_1, \\
		H = 0 & \text{on } p = p_0, \\
		H = \ell + d(H) & \text{on } p = 0.
	\end{dcases}
\end{equation}
Note that $d(H) = H(p_1)$. The above equation can be solved explicitly, but we still need some compatibility conditions to ensure continuity across the interface.

\begin{lemma}[laminar flow] \label{Lemma laminar solution air vorticity}
If the compatibility condition (\ref{Gamma_rel definition air vorticity}) is satisfied, then there exists a one-parameter family of solutions $\{(H(\cdot;\lambda), Q(\lambda)) : \lambda > 0 \}$ to the laminar flow equation (\ref{laminar equation air vorticity}) with $H_p > 0$. Each member of the family has the explicit form
\begin{equation} \label{laminar solution air vorticity}
	H(p;\lambda) = \begin{dcases}
	\dsp \int_{p_1}^p \frac{\mathrm{d}s}{\GammaRel(s)} + \frac{p_1 - p_0}{\lambda}, & p_1 < p < 0, \\
	\frac{p - p_0}{\lambda}, & p_0 < p < p_1,
	\end{dcases}
\end{equation}
and
\begin{equation} \label{Q(lambda) laminar solution air vorticity}
	Q(\lambda) = \frac{2g \jump{\rho}(p_1 - p_0)}{\lambda} + \GammaRel(p_1)^2 - \lambda^2.
\end{equation}
Moreover, the depth of the fluid at parameter value $\lambda$ is
\begin{equation} \label{d(lambda) air vorticity}
	d(H(\cdot;\lambda)) = \frac{p_1 - p_0}{\lambda}.
\end{equation}
\end{lemma}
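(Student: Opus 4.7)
The plan is to solve the ODE system explicitly, working from the bed upward, and using the compatibility condition \eqref{Gamma_rel definition air vorticity} precisely where it is needed to close the lid condition. Since the problem decouples in the two regions (connected only through the Wentzell/Bernoulli condition on $p=p_1$ and the matching $\jump{H}=0$), $Q$ will be forced and the only genuine free parameter will be the slope of $H$ in the water, which I will call $1/\lambda$.

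First, in $D_2$ the equation reduces to $H_{pp}=0$, so with the bed condition $H(p_0)=0$, $H$ is linear. Writing $H_p = 1/\lambda$ for $\lambda>0$ (so that $H_p>0$) gives $H(p) = (p-p_0)/\lambda$ in the water, and immediately $d(H) = H(p_1^-) = (p_1-p_0)/\lambda$, yielding \eqref{d(lambda) air vorticity}. Using $\jump{H}=0$ fixes $H(p_1^+) = (p_1-p_0)/\lambda$ in the air. Next, in $D_1$, I would introduce the Bernoulli-type substitution $V := H_p^{-2}$, which converts $H_{pp}=-\gamma(-p)H_p^3$ into the linear first-order ODE $V_p = 2\gamma(-p)$. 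By the first part of the compatibility condition \eqref{Gamma_rel definition air vorticity}, this integrates to $V(p) = \GammaRel(p)^2 + C$ for some constant $C$, and since $H_p>0$ must hold throughout, I take the positive root $H_p = (\GammaRel(p)^2+C)^{-1/2}$.

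The constant $C$ is then pinned down by the top-lid condition $H(0) = \ell + d(H)$: subtracting $H(p_1) = d(H)$ from both sides reduces it to $\int_{p_1}^{0}(\GammaRel(s)^2+C)^{-1/2}\,\mathrm{d}s = \ell$, and the \emph{second} half of \eqref{Gamma_rel definition air vorticity} is precisely the statement that $C=0$ works (uniqueness of the monotone integrand guarantees this is the only admissible choice). This produces \eqref{laminar solution air vorticity}. It only remains to compute $Q$ from the Bernoulli condition $\jump{H_p^{-2}} + 2g\jump{\rho}H - Q = 0$ at $p=p_1$: on the air side $H_p^{-2} = \GammaRel(p_1)^2$, on the water side $H_p^{-2} = \lambda^2$, and $H(p_1) = (p_1-p_0)/\lambda$, which together give \eqref{Q(lambda) laminar solution air vorticity}.

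The only step with any subtlety is matching the top-lid condition, because $d(H)$ depends implicitly on the unknown laminar profile; however, the compatibility condition \eqref{Gamma_rel definition air vorticity} is engineered so that the $d(H)$ contributions on both sides cancel exactly, leaving a condition that is automatically satisfied. Everything else is direct integration and algebra, so no additional obstacle arises.
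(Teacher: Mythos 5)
Your proposal is correct and is exactly the direct-integration argument the paper has in mind (the paper omits the proof, deferring to Lemma 4.2 of B\"uhler--Shatah--Walsh, which proceeds in the same way: linear profile in the water, the substitution $V=H_p^{-2}$ turning the air equation into $V_p=2\gamma(-p)$, the lid condition reducing to $\int_{p_1}^0(\GammaRel^2+C)^{-1/2}\,\mathrm{d}s=\ell$ so that the compatibility condition forces $C=0$, and the Bernoulli jump condition then determining $Q(\lambda)$). All the computations check out, including the monotonicity-in-$C$ argument pinning down $C=0$ and the use of $\jump{H}=0$ (implicit in requiring $H\in C^{0,\alpha}(\overline D)$) to fix the constant of integration in the air region.
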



Since the laminar flow is independent of the surface tension $\sigma$, the proof of Lemma \ref{Lemma laminar solution air vorticity} can be obtained by similar arguments as in \cite[Lemma 4.2]{Buhler_Shatah_Walsh2013}, which we will omit. Note that differentiating (\ref{Q(lambda) laminar solution air vorticity}) with respect to $\lambda$ gives
\[
	Q'(\lambda) = -\frac{2g\jump{\rho}(p_1 - p_0)}{\lambda^2} - 2\lambda
\]
and
\[
	Q''(\lambda) = \frac{4g\jump{\rho}(p_1-p_0)}{\lambda^3} - 2 < 0,
\]
so $\lambda \mapsto Q(\lambda)$ is concave and has a unique maximum at $\lambda_0$ satisfying
\begin{equation} \label{lambda_0 formula}
	\lambda_0^3 = -g\jump{\rho}(p_1-p_0).
\end{equation}

\subsection{Linearized problem}
Next, let us consider the linearization of the height equation \eqref{height equation air vorticity} at one of the laminar solutions $(H(\cdot;\lambda), Q(\lambda))$ constructed in Lemma \ref{Lemma laminar solution air vorticity}:
\begin{equation} \label{linearized equation air vorticity}
	\begin{dcases}
		(a^3 m_p)_p + (a m_q)_q = 0 & \text{in } D_1 \cup D_2, \\
		-2\jump{a^3 m_p} + 2g\jump{\rho}m + \sigma m_{qq} = 0 & \text{on } p=p_1, \\
		m = 0 & \text{on } p = p_0, \\
		m - d(m) = 0 & \text{on } p = 0,
	\end{dcases}
\end{equation}
where
\[
	a := a(p;\lambda) = H_p(p;\lambda)^{-1} = \begin{dcases}
	\GammaRel(p), & p_1 < p < 0, \\
	\lambda, & p_0 < p < p_1.
	\end{dcases}
\]

Since we seek solutions that are $2\pi$-periodic and even in $q$, we first consider $m$ of the form $m(q,p) = M(p)\cos(nq)$, for some $n \ge 0$. If $n = 0$, $m$ does not depend on $q$ and the linearized problem \eqref{linearized equation air vorticity} becomes
\begin{equation*}
	\begin{dcases}
		(a^3 M_p)_p = 0 & \text{in } p_1 < p < 0, \\
		M_{pp} = 0 & \text{in } p_0 < p < p_1, \\
		-\jump{a^3 M_p} + g\jump{\rho}M = 0 & \text{on } p = p_1, \\
		M = 0 & \text{on } p = p_0, \\
		M - d(M) = 0 & \text{on } p = 0.
	\end{dcases}
\end{equation*}
This equation can be solved explicitly. Using the boundary condition at $p = p_0$ and the continuity of $M$ across the interface, we find that in the water region
\[
	M^{(2)}(p) = \frac{p - p_0}{p_1 - p_0} M(p_1), \quad \text{in } p_0 < p < p_1.
\]
For the air region, we first observe that
\[
	M(0) = d(M) = \frac{1}{2\pi} \int_{-\pi}^{\pi} M(p_1) \,\mathrm{d}q = M(p_1),
\]
and hence, $M_p$ must vanish at least once inside $(p_1,0)$ by Rolle's Theorem. We also have, from the above ODE, that $a^3 M_p$ is constant, so we conclude that $M_p \equiv 0$ in $(p_1,0)$. Finally, the jump condition gives
\[
	-\frac{\lambda^3}{p_1 - p_0} M(p_1) = g\jump{\rho} M(p_1),
\]
which implies that there can be a zero-mode solution if and only if $\lambda = \lambda_0$, where $\lambda_0$ is defined according to \eqref{lambda_0 formula}.

On the other hand, if $n > 0$, the linearized problem \eqref{linearized equation air vorticity} becomes
\begin{equation} \label{Pn air vorticity}
	\begin{dcases}
		-(a^3 M_p)_p = -n^2 a M & \text{in } (p_0,p_1) \cup (p_1,0), \\
		2\jump{a^3 M_p} - 2g \jump{\rho} M = -n^2 \sigma M & \text{on } p = p_1, \\
		M = 0 & \text{on } p = p_0, \\
		M = 0 & \text{on } p = 0.
	\end{dcases}
\end{equation}
To investigate the ODE (\ref{Pn air vorticity}), we consider the following general eigenvalue problem
\begin{equation} \label{P_mu eigenvalue problem}
	\tag{$P_\mu$}
	\begin{dcases}
		-\frac{1}{a}(a^3 u')' = \mu u & \text{in } (p_0, p_1) \cup (p_1, 0), \\
		2\jump{a^3 u'} - 2g \jump{\rho} u = \mu \sigma u & \text{on } p = p_1, \\
		u = 0 & \text{on } p = p_0, \\
		u = 0 & \text{on } p = 0.
	\end{dcases}
\end{equation}
In particular, we are interested in the case $\mu = -n^2$.

The eigenvalue problem \eqref{P_mu eigenvalue problem} closely resembles a Sturm--Liouville equation, but the eigenvalue occurs both in the interior and boundary conditions. Moreover, the associated inner product defining the relation between eigenfunctions is indefinite. For that reason, it is natural to reformulate it in a Pontryagin space. Here we follow the general approach of Wahl\'en \cite{Wahlen_gravity2006,Wahlen2006} and Walsh \cite{Walsh2014}.

With that in mind, we introduce the complex Pontryagin space (see \cite{Bognar1974,Iohvidov1982})
\[
	\mathbb{H} := \left\{ \tilde{u} = (u,b) \in L^2([p_0, 0]) \times \mathbb{C} \right\}
\]
with the indefinite inner product
\[
	[\tilde{u}_1, \tilde{u}_2] := \langle au_1, u_2 \rangle_{L^2} - \frac{1}{2\sigma} b_1 \overline{b_2}.
\]
We understand that the $L^2$-inner product is taken over $(p_0, 0)$. On $\mathbb{H}$, there is also an associated Hilbert space inner product, given by $\langle \tilde{u}, \tilde{v} \rangle_\mathbb{H} = [J\tilde{u}, \tilde{v}]$, where
\[
	J = \left( \begin{matrix}
		I & 0 \\
		0 & -1
	\end{matrix} \right).
\]

\begin{prop} \label{Proposition maximal negative definite subspace dimension 1}
$\mathbb{H}$ is $\pi_1$-space, that is $\mathbb{H} = \mathbb{H}_+ \oplus \mathbb{H}_-$, where
\begin{align*}
	\mathbb{H}_+ & \subset \{ x \in \mathbb{H}: [x,x] > 0, \text{ or } x=0 \}, \\
	\mathbb{H}_- & \subset \{ x \in \mathbb{H}: [x,x] < 0, \text{ or } x=0 \}
\end{align*}
are complete subspaces with $\dim\mathbb{H}_+=1$ or $\dim\mathbb{H}_-=1$.
\end{prop}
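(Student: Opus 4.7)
My plan is to exhibit the decomposition directly and verify each of the claimed properties. Define
\[
    \mathbb{H}_+ := L^2([p_0,0]) \times \{0\}, \qquad \mathbb{H}_- := \{0\} \times \mathbb{C},
\]
as subspaces of $\mathbb{H}$. This is the obvious candidate decomposition, since the indefinite form $[\cdot,\cdot]$ is already block-diagonal with respect to the splitting of $\mathbb{H}$ into its $L^2$-component and its scalar component: the two components do not interact under $[\cdot,\cdot]$, so their orthogonal sum in the sense of the form is automatic. Any $\tilde u = (u,b) \in \mathbb{H}$ admits the unique decomposition $\tilde u = (u,0) + (0,b)$, giving $\mathbb{H} = \mathbb{H}_+ \oplus \mathbb{H}_-$ as vector spaces.

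Next I would check the sign conditions. For the first summand, I need $a > 0$ on $[p_0,0]$. In the water region $a = \lambda > 0$ by Lemma \ref{Lemma laminar solution air vorticity}, while in the air region $a = \GammaRel(p)$ is positive because the compatibility condition $\ell = \int_{p_1}^0 dp/\GammaRel$ with $\ell > 0$ and $p_1 < 0$ forces $\GammaRel > 0$ (it is bounded away from zero away from degenerate values of $p$, but even an $L^1$ reciprocal suffices for our purposes). Since $\sigma > 0$, it follows that
\[
    [(u,0),(u,0)] = \int_{p_0}^0 a\,|u|^2\,dp \geq 0,
\]
with equality iff $u=0$ in $L^2$, and
\[
    [(0,b),(0,b)] = -\frac{1}{2\sigma}|b|^2 \leq 0,
\]
with equality iff $b=0$. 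Thus $\mathbb{H}_+$ and $\mathbb{H}_-$ satisfy the inclusion requirements in the statement.

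For completeness, note that the Hilbert inner product $\langle\cdot,\cdot\rangle_\mathbb{H} = [J\cdot,\cdot]$ restricts on $\mathbb{H}_+$ to $\langle a u_1, u_2\rangle_{L^2}$, which is equivalent to the usual $L^2$ inner product because $a$ is bounded above and below away from $0$, so $\mathbb{H}_+$ is a Hilbert space. On $\mathbb{H}_-$ it restricts to $\frac{1}{2\sigma}|b|^2$, which is a norm on the one-dimensional space $\mathbb{C}$, hence complete. Finally, $\dim \mathbb{H}_- = 1$ by inspection, so $\mathbb{H}$ is a $\pi_1$-space in the sense of \cite{Bognar1974,Iohvidov1982}.

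There is really no hard step here: the only thing to verify with any care is the strict positivity of $a$, which I would justify in one sentence by invoking the no-stagnation assumption $h_p > 0$ together with the explicit formulas from Lemma \ref{Lemma laminar solution air vorticity}. Everything else is bookkeeping once one observes that $[\cdot,\cdot]$ is diagonal across the product decomposition of $\mathbb{H}$.
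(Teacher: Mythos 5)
Your proof is correct and uses exactly the decomposition the paper itself records immediately after the proposition, namely $\mathbb{H}_+ = L^2((p_0,0))\times\{0\}$ and $\mathbb{H}_- = \{0\}\times\mathbb{C}$; the paper omits the verification as elementary, and your bookkeeping (positivity of $a$ from the no-stagnation condition, block-diagonality of $[\cdot,\cdot]$, completeness, $\dim\mathbb{H}_-=1$) supplies it in the same way.
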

We omit the proof of this proposition, as it is elementary.  In fact, we have explicitly that $\mathbb{H} = \mathbb{H}_+ \oplus \mathbb{H}_-$, where
\begin{align*}
	\mathbb{H}_+ & := L^2((p_0, 0)) \times \{0\}, \\
	\mathbb{H}_- & := \left\{ 0 \in L^2((p_0, 0) \right\} \times \mathbb{C}.
\end{align*}


Next, define the linear operator $K: D(K) \subset \mathbb{H} \to \mathbb{H}$ by
\[
	K\tilde{u} := \left( -\frac{1}{a} \left( a^3 u' \right)' \, , \, 2\jump{a^3 u'} - 2g\jump{\rho} u(p_1) \right),
\]
where
\begin{multline*}
	D(K) := \big\{ \tilde{u} = (u, b) \in \big( H^2((p_0,p_1)) \cap H^2((p_1,0)) \cap C^0((p_0,0)) \big) \times \mathbb{C}: \\ u(p_0) = u(0) = 0, \sigma u(p_1) = b \big\}.
\end{multline*}
Thus, there exists a nontrivial solution of \eqref{P_mu eigenvalue problem} if and only if $\mu$ is an eigenvalue of $K$. Moreover, it is clear that $D(K)$ is dense in $\mathbb{H}$, and the operator $K$ is closed. Recalling the convention $\jump{u} = u|_{p_1^+} - u|_{p_1^-}$ and using integration by parts, we can show
\begin{align} \label{K symmetric}
	[K\tilde{u}, \tilde{u}] = \left\langle a^3 u', u' \right\rangle_{L^2} + g\jump{\rho} |u(p_1)|^2 = [\tilde{u}, K\tilde{u}] \in \mathbb{R},
\end{align}
which implies that $K$ is symmetric, and in fact, self-adjoint. The next proposition provides a condition under which the operator $K$ is positive, that is, $[K\tilde{u}, \tilde{u}] > 0$ for all non-zero $\tilde{u} \in D(K)$.

\begin{prop} \label{Proposition unique eigenvalue for operator K}
$K$ is self-adjoint with simple eigenvalues. Moreover, it has a maximal negative semidefinite subspace invariant under $K$ that has dimension one\footnote{We recall that the subspace $\mathcal{M}$ of $\mathbb{H}$ is said to be negative semidefinite, provided that for all $u \in \mathcal{M}$, $[u,u] \le 0$. Moreover, $\mathcal{M}$ is an invariant subspace under the operator $K$ if $K(\mathcal{M}) \subset \mathcal{M}$. Also, $\mathcal{M}$ is said to be a maximal subspace in $\mathbb{H}$ if $\mathcal{M}$ is not contained in any other proper subspaces of $\mathbb{H}$.}. For $\lambda > \lambda_0$, the operator $K$ is positive with a unique negative eigenvalue.
\end{prop}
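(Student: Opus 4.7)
Self-adjointness will follow from the symmetry identity \eqref{K symmetric} via a standard resolvent argument: $K$ is already closed, densely defined, and symmetric, so it suffices to exhibit a regular point at which $K - \mu_0$ is bijective onto $\mathbb{H}$, which reduces to a solvability problem for the boundary value problem \eqref{P_mu eigenvalue problem} that can be handled by classical Sturm--Liouville theory on each subinterval coupled through the transmission condition. Simplicity of the eigenvalues I would obtain by a shooting argument. Fix an eigenvalue $\mu$ with eigenvector $\tilde u = (u, \sigma u(p_1))$. One first observes $u'(p_0) \neq 0$, for otherwise ODE uniqueness gives $u \equiv 0$ on $(p_0, p_1)$, hence $u(p_1) = 0$, the jump condition forces $u'(p_1^+) = 0$, and $u \equiv 0$ on $(p_1, 0)$ follows. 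Normalizing $u'(p_0) = 1$, the ODE determines $u$ uniquely on $(p_0, p_1)$; continuity at $p_1$ fixes $u(p_1^+)$; the jump condition determines $u'(p_1^+)$; and the ODE then propagates uniquely to $p = 0$. Hence the eigenspace at $\mu$ is one-dimensional.

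For the maximal negative semidefinite invariant subspace, I would invoke a classical theorem on self-adjoint operators in Pontryagin spaces (see \cite{Bognar1974,Iohvidov1982}): any such operator on a $\pi_\kappa$-space admits a maximal non-positive invariant subspace whose dimension equals $\kappa$. Proposition \ref{Proposition maximal negative definite subspace dimension 1} established $\kappa = 1$ here, yielding the claim.

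For positivity under $\lambda > \lambda_0$, I would use \eqref{K symmetric} to write
\[
[K\tilde u, \tilde u] = \int_{p_0}^{0} a^3 |u'|^2 \,\mathrm{d}p + g\jump{\rho}|u(p_1)|^2.
\]
Since $a \equiv \lambda$ in the water region, the identity $u(p_1) = \int_{p_0}^{p_1} u'(p)\,\mathrm{d}p$ and Cauchy--Schwarz give
\[
[K\tilde u, \tilde u] \ge \left(\frac{\lambda^3}{p_1-p_0} + g\jump{\rho}\right)|u(p_1)|^2 + \int_{p_1}^{0} \GammaRel(p)^3 |u'|^2 \,\mathrm{d}p.
\]
By \eqref{lambda_0 formula} the coefficient in parentheses vanishes at $\lambda = \lambda_0$ and is strictly positive for $\lambda > \lambda_0$. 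Moreover, $[K\tilde u, \tilde u] = 0$ would force $u \equiv 0$ on the air region (from vanishing of the second summand together with $u(0) = 0$), $u(p_1) = 0$, and equality in Cauchy--Schwarz with $u(p_0) = u(p_1) = 0$ would then yield $u \equiv 0$ on the water region as well, forcing $\tilde u = 0$. Hence $[K\tilde u, \tilde u] > 0$ for every non-zero $\tilde u \in D(K)$.

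Finally, to produce the unique negative eigenvalue, let $\mathcal{M}$ denote the one-dimensional invariant non-positive subspace given above, and pick $\tilde u \in \mathcal{M} \setminus \{0\}$; since $\dim \mathcal{M} = 1$ and $K\mathcal{M} \subset \mathcal{M}$, $\tilde u$ is an eigenvector with some eigenvalue $\mu$. Positivity gives $0 < [K\tilde u, \tilde u] = \mu [\tilde u, \tilde u]$, and $[\tilde u, \tilde u] \le 0$ forces both $[\tilde u, \tilde u] < 0$ and $\mu < 0$. Uniqueness follows because a linearly independent second eigenvector $\tilde v$ with negative eigenvalue would also satisfy $[\tilde v, \tilde v] < 0$ by the same identity; then $\mathrm{span}\{\tilde u, \tilde v\}$ would be a two-dimensional negative definite subspace, contradicting $\dim \mathbb{H}_- = 1$. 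The main technical obstacle I anticipate is the careful verification of self-adjointness (as opposed to mere symmetry) in the Pontryagin framework, where the standard Hilbert space deficiency-index machinery requires adaptation because of the indefiniteness of the inner product.
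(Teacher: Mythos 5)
Your proposal is correct and follows essentially the same route as the paper: self-adjointness from the symmetry identity \eqref{K symmetric}, the Pontryagin-space theorem (\cite{Iohvidov1982}) for the one-dimensional maximal non-positive invariant subspace, Cauchy--Schwarz in the water layer combined with \eqref{lambda_0 formula} for positivity when $\lambda > \lambda_0$, and positivity plus the $\pi_1$ structure for the unique negative eigenvalue. The one genuine difference is your treatment of simplicity: the paper cites an argument of Wahl\'en for geometric simplicity, whereas your shooting argument (normalize $u'(p_0)$, propagate through the transmission condition at $p_1$) is a self-contained and perfectly valid substitute; the paper additionally records that complex eigenvalues would have neutral eigenvectors and that real non-neutral eigenvalues are algebraically simple, which your write-up omits but which is not needed for the final positivity claim. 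Two small points to tighten. First, in the uniqueness step you assert that $\mathrm{span}\{\tilde u,\tilde v\}$ is negative definite from $[\tilde u,\tilde u]<0$ and $[\tilde v,\tilde v]<0$ alone; this requires the cross term to vanish. Supply the standard orthogonality: for distinct (real) eigenvalues $\mu\neq\nu$, the identity $\mu[\tilde u,\tilde v]=[K\tilde u,\tilde v]=[\tilde u,K\tilde v]=\nu[\tilde u,\tilde v]$ forces $[\tilde u,\tilde v]=0$, while equal eigenvalues are excluded by the simplicity you already proved; the paper makes exactly this computation. Second, in the equality case of your positivity argument, once $u(p_1)=0$ and $u'\equiv 0$ on $(p_1,0)$ are known, it is cleaner to return to the exact identity $[K\tilde u,\tilde u]=\int_{p_0}^0 a^3|u'|^2\,\mathrm{d}p+g\jump{\rho}|u(p_1)|^2=0$ to conclude $u'\equiv 0$ on $(p_0,p_1)$ directly, rather than appealing to equality in Cauchy--Schwarz. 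Neither issue affects the soundness of the approach.
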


\begin{proof}
It follows from the above discussion that $K$ is self-adjoint. Since $\mathbb{H}$ is a $\pi_1$-space, \cite[Theorem 12.1']{Iohvidov1982} implies that $K$ has a maximal negative semidefinite subspace invariant under $K$ that has dimension one. By an argument similar to \cite[Lemma 3.8]{Wahlen_gravity2006} and \cite[Lemma 2]{Wahlen2006}, we see that $K$ has discrete spectrum and its eigenvalues are geometrically simple.

Next, since $K$ has a maximal invariant negative semidefinite subspace which is of dimension one, it has at least one eigenvalue of negative-semidefinite type. By this, we mean the restriction of $[\cdot,\cdot]$ to the eigenspace corresponding to an eigenvalue is a negative semidefinite inner product. We caution that this does not say anything about the sign of the eigenvalue itself. Let $\mu$ be a general eigenvalue of $K$ with corresponding non-zero eigenvector $\tilde{u}$.  Taking the complex conjugate of the equation $K\tilde{u} = \mu \tilde{u}$, we see that $\bar{\mu}$ is also an eigenvalue of $K$ (note that the coefficients of the operator $K$ are real). Thus, either $\mu$ is real, or both $\mu$ and its complex conjugate are eigenvalues. For the latter case, the corresponding eigenvector $\tilde{u}$ must be neutral, that is, $[\tilde{u}, \tilde{u}] = 0$. This follows from the observation that
\[
	\mu [\tilde{u}, \tilde{u}] = [K\tilde{u}, \tilde{u}] = [\tilde{u}, K\tilde{u}] = \bar{\mu} [\tilde{u}, \tilde{u}].
\]

On the other hand, if $\mu \in \mathbb{R}$ is an eigenvalue with corresponding eigenvector $\tilde{u}$ such that $[\tilde{u}, \tilde{u}] \ne 0$, then letting
\[
	\mathcal{N} := \text{span } \tilde{u} = \ker(K - \mu I) \quad \text{and} \quad \mathcal{N}^{[\perp]} := \{\tilde{v} \in \mathbb{H}: [\tilde{v}, \mathcal{N}] = 0\},
\]
we have $\mathbb{H} = \mathcal{N} \, [\dot{+}] \, \mathcal{N}^{[\perp]}$ as an orthogonal direct sum. Note that we are using $[\perp]$ and $[\dot{+}]$ to emphasize that the orthogonality is with respect to the indefinite inner product $[\cdot,\cdot]$. If $\tilde{w}$ is in the range of $K - \mu I$, then $\tilde{w} = (K - \mu I)\tilde{v}$ for some $\tilde{v} \in \mathbb{H}$, and hence
\[
	[\tilde{w}, \tilde{u}] = [K\tilde{v}, \tilde{u}] - \mu [\tilde{v}, \tilde{u}] = [\tilde{v}, K\tilde{u}] - [\tilde{v}, \mu \tilde{u}] = 0,
\]
which implies that the range of $K - \mu I$ is in $\mathcal{N}^{[\perp]}$. Thus, $\mu$ is algebraically simple if it is real.  

Finally, suppose $\lambda > \lambda_0$. By the Cauchy--Schwarz inequality,
\begin{align*}
	|u(p_1)|^2 = \left| \int_{p_0}^{p_1} u'(p) \,\mathrm{d}p \right|^2 \le \int_{p_0}^{p_1} a^3 \left|u'\right|^2 \,\mathrm{d}p \int_{p_0}^{p_1} a^{-3} \,\mathrm{d}p < -\frac{1}{g\jump{\rho}} \int_{p_0}^{p_1} a^3 \left|u'\right|^2 \,\mathrm{d}p,
\end{align*}
which gives
\[
	g\jump{\rho} |u(p_1)|^2 + \int_{p_0}^{0} a^3 \left|u'\right|^2 \,\mathrm{d}p > 0.
\]
Thus, $[K\tilde{u}, \tilde{u}] > 0$, that is, $K$ is positive. Then $[\tilde{u}, \tilde{u}] \ne 0$, so $\tilde{u}$ is non-neutral. Hence all eigenvalues are real when $\lambda > \lambda_0$.

If $\mu$ is a negative semidefinite eigenvalue with the corresponding eigenvector $\tilde{u}$, then
\[
	\mu [\tilde{u}, \tilde{u}] = [K\tilde{u}, \tilde{u}] > 0,
\]
and hence, it follows that $\mu < 0$. This means that any real negative semidefinite eigenvalue of $K$ must be negative. In fact, there is only one such eigenvalue. Indeed, if $\nu$ is another eigenvalue with corresponding eigenvector $\tilde{v}$, then
\[
	[\tilde{u}, \tilde{v}] = \frac{1}{\mu} [\mu \tilde{u}, \tilde{v}] = \frac{1}{\mu} [K \tilde{u}, \tilde{v}] = \frac{1}{\mu} [\tilde{u}, K\tilde{v}] = \frac{\nu}{\mu} [\tilde{u}, \tilde{v}],
\]
which implies $\mu = \nu$ or $[\tilde{u}, \tilde{v}] = 0$. Since any maximal invariant semidefinite subspace of $K$ is one dimensional, we must have $\mu = \nu$. We have therefore shown $K$ has a unique negative eigenvalue.
\end{proof}

Define the Rayleigh quotient $\mathscr{R}$ corresponding to \eqref{P_mu eigenvalue problem} by
\[
	\mathscr{R}(\varphi; \lambda) := \frac{\int_{p_0}^0 a^3 \varphi_p^2 \,\mathrm{d}p + g\jump{\rho} \varphi(p_1)^2}{\int_{p_0}^0 a \varphi^2 \,\mathrm{d}p - \frac{\sigma}{2}\varphi(p_1)^2}, \quad \lambda > \lambda_0, \varphi \in \mathscr{A},
\]
where the admissible set is defined by
\begin{multline*}
	\mathscr{A} :=  \Big\{ \varphi \in H^2((p_0,p_1)) \cap H^2((p_1,0)) \cap C((p_0,0)) : \\ \varphi(p_0) = \varphi(0) = 0 \text{ and } \int_{p_0}^0 a\varphi^2 \,\mathrm{d}p - \frac{\sigma}{2} \varphi(p_1)^2 < 0 \Big\}.
\end{multline*}
Note that we are considering $\varphi$ only in the negative definite subspace of $K$ because of the condition
\begin{equation} \label{negative semidefinite}
	\int_{p_0}^0 a\varphi^2 \,\mathrm{d}p - \frac{\sigma}{2} \varphi(p_1)^2 < 0.
\end{equation}

Simple arguments can show that if for a fixed $\lambda > \lambda_0$, $\varphi$ is a critical point of $\mathscr{R}(\cdot; \lambda)$, then $\varphi$ solves \eqref{P_mu eigenvalue problem} for $\mu = \mathscr{R}(\varphi;\lambda)$.

Next, let us define
\[
	\nu(\lambda) := \sup_{\substack{\varphi \in \mathscr{A} \\ \varphi \not\equiv 0}} \mathscr{R}(\varphi; \lambda).
\]
First, we want to show that $-1$ is in the range of $\nu$.  This is because we want our solutions to be $2\pi$-periodic in $q$, and the null space of $\mathcal{F}_m(\lambda^*, 0)$ is spanned by $\varphi_1(p) \cos(q)$ (see Lemma \ref{Lemma null space air vorticity}), which is the case where $n=1$ and hence $\mu = -n^2 = -1$ in \eqref{Pn air vorticity}.

\begin{lemma} \label{Lemma Rayleigh quotient < -n^2}
Let $a_\mathrm{min} := \min_{[p_1, 0]} a$ (which does not depend on $\lambda$). Then for each $n \ge 1$, $\nu(\lambda) < -n^2$ when $\lambda$ satisfies
\[
	\lambda^2 > -a_\mathrm{min}^2 - \frac{g\jump{\rho}}{n} + \frac{\sigma n}{2}.
\]
\end{lemma}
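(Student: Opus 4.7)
The plan is to recast $\mathscr{R}(\varphi;\lambda)<-n^2$ in a form amenable to direct Young-type estimates. Writing $\mathscr{R} = N(\varphi)/D(\varphi)$ and noting that $D(\varphi)<0$ for every $\varphi \in \mathscr{A}$, clearing denominators and flipping the sign shows $\mathscr{R}(\varphi;\lambda)<-n^2$ is equivalent to
\[
\mathcal{E}(\varphi) := \int_{p_0}^0 \bigl(a^3\varphi_p^2 + n^2 a\,\varphi^2\bigr)\,\mathrm{d}p + \left(g\jump{\rho} - \tfrac{n^2 \sigma}{2}\right) \varphi(p_1)^2 > 0.
\]
My goal is to bound $\mathcal{E}(\varphi)$ below by a positive multiple of $\varphi(p_1)^2$, then show $\varphi(p_1)$ must be nonzero on $\mathscr{A}$, which will force a uniform spectral gap.

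The bounds on $\varphi(p_1)^2$ come from Young's inequality applied separately in each subregion. In the water layer $(p_0,p_1)$ where $a\equiv\lambda$, using $\varphi(p_0)=0$ to write $\varphi(p_1)^2 = \int_{p_0}^{p_1} 2\varphi\varphi_p\,\mathrm{d}p$ and applying Young with weight $\lambda/n$ gives
\[
\int_{p_0}^{p_1} \bigl(\lambda^3 \varphi_p^2 + n^2 \lambda \varphi^2\bigr)\,\mathrm{d}p \ge n\lambda^2 \varphi(p_1)^2.
\]
In the air layer $(p_1,0)$ where $a=\GammaRel(p)\ge a_\mathrm{min}$, I use $\varphi(0)=0$ so that $\varphi(p_1)^2 = -\int_{p_1}^0 2\varphi\varphi_p\,\mathrm{d}p$, and apply Young with the variable weight $a^3/(n a_\mathrm{min}^2)$. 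After multiplying through by $n a_\mathrm{min}^2$, the coefficient of $\varphi^2$ becomes $n^2 a_\mathrm{min}^4/a^3$, which is dominated by $n^2 a$ because $a \ge a_\mathrm{min}$. This yields
\[
\int_{p_1}^0 \bigl(a^3 \varphi_p^2 + n^2 a \varphi^2\bigr)\,\mathrm{d}p \ge n a_\mathrm{min}^2 \varphi(p_1)^2.
\]
Adding the two estimates,
\[
\mathcal{E}(\varphi) \ge \left(n\lambda^2 + n a_\mathrm{min}^2 + g\jump{\rho} - \tfrac{n^2\sigma}{2}\right)\varphi(p_1)^2 =: n\kappa\,\varphi(p_1)^2,
\]
and multiplying the hypothesis by $n$ gives $\kappa>0$.

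To close the argument I observe that $\varphi \in \mathscr{A}$ forces $\varphi(p_1)\ne 0$: otherwise $D(\varphi) = \int_{p_0}^0 a\varphi^2\,\mathrm{d}p \ge 0$, contradicting $D<0$. Moreover, directly from $D(\varphi)<0$ one has $|D(\varphi)| \le (\sigma/2)\varphi(p_1)^2$. Consequently, uniformly in $\varphi\in\mathscr{A}\setminus\{0\}$,
\[
\mathscr{R}(\varphi;\lambda) + n^2 = \frac{\mathcal{E}(\varphi)}{D(\varphi)} \le -\frac{n\kappa\,\varphi(p_1)^2}{(\sigma/2)\,\varphi(p_1)^2} = -\frac{2n\kappa}{\sigma},
\]
so $\nu(\lambda) \le -n^2 - 2n\kappa/\sigma < -n^2$.

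The main subtlety will be choosing the Young weights so that the air-region lower bound comes out with the clean constant $n a_\mathrm{min}^2$ (rather than a more awkward expression involving $|p_1|$ or $\int a^{-3}$); the elementary pointwise inequality $a_\mathrm{min}^4/a^3 \le a$ is what makes this possible, and it is precisely what matches the hypothesis as stated.
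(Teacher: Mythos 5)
Your proof is correct and follows essentially the same route as the paper: Young's inequality applied separately in the water layer (with weight $\lambda/n$) and the air layer (reducing to $a_\mathrm{min}$) to bound $\int_{p_0}^0(a^3\varphi_p^2+n^2a\varphi^2)\,\mathrm{d}p$ below by $(n\lambda^2+na_\mathrm{min}^2)\varphi(p_1)^2$, then invoking the hypothesis on $\lambda$. Your two additional observations --- that admissibility forces $\varphi(p_1)\ne 0$, and that $|D(\varphi)|\le(\sigma/2)\varphi(p_1)^2$ yields a uniform gap so the supremum itself is strictly below $-n^2$ --- are welcome refinements that the paper leaves implicit.
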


\begin{proof}
Let $\varphi \in \mathscr{A}$ be given and fix any $\lambda$ as in the hypothesis. Then
\begin{align*}
	\int_{p_1}^0 \left( a^3 \varphi_p^2 + n^2 a \varphi^2 \right) \,\mathrm{d}p &\ge a_\mathrm{min} \int_{p_1}^0 \left( a_\text{min}^2 \varphi_p^2 + n^2 \varphi^2 \right) \,\mathrm{d}p \\
	&\ge -2 n a_\mathrm{min}^2 \int_{p_1}^0 \varphi_p \varphi \,\mathrm{d}p \\
	&= -n a_\text{min}^2 \int_{p_1}^0 \left( \varphi^2 \right)_p \,\mathrm{d}p = n a_\mathrm{min}^2 \varphi(p_1)^2.
\end{align*}
On the other hand, since $a^{(2)} = \lambda$,
\[
	\int_{p_0}^{p_1} \left( a^3 \varphi_p^2 + n^2 a \varphi^2 \right) \,\mathrm{d}p = \lambda \int_{p_0}^{p_1} \left( a^2 \varphi_p^2 + n^2 \varphi^2 \right) \,\mathrm{d}p \ge 2 n\lambda^2 \int_{p_0}^{p_1} \varphi_p \varphi \,\mathrm{d}p = n\lambda^2 \varphi(p_1)^2.
\]
Summing these together and using the hypothesis for $\lambda$, we find
\begin{align*}
	\int_{p_0}^0 \left( a^3 \varphi_p^2 + n^2 a\varphi^2 \right) \,\mathrm{d}p &\ge \left( n\lambda^2 + na_\mathrm{min}^2 \right) \varphi(p_1)^2 \\
	&> \left( -g\jump{\rho} + \frac{\sigma n^2}{2}\right) \varphi(p_1)^2,
\end{align*}
which implies
\[
	\int_{p_0}^0 a^3 \varphi_p^2 \,\mathrm{d}p + g\jump{\rho}\varphi(p_1)^2 > -n^2 \left( \int_{p_0}^0 a \varphi^2 \,\mathrm{d}p - \frac{\sigma}{2}\varphi(p_1)^2 \right),
\]
so $\mathscr{R}(\varphi; \lambda) < -n^2$. Thus, $\nu(\lambda) < -n^2$.
\end{proof}
Next, we need to verify that $\nu(\lambda) > -1$ for some $\lambda > \lambda_0$. Since this is not true in general, we will have it as one of our hypotheses.  

\begin{definition} \label{Definition LBC air vorticity}
We say that the \emph{local bifurcation condition} is satisfied provided that
\begin{align} \label{LBC air vorticity}
	\tag{LBC}
	\sup_{\lambda > \lambda_0} \nu(\lambda) > -1.
\end{align}
\end{definition}

This is necessary and sufficient for our main result Theorem \ref{Theorem local bifurcation air vorticity surface tension} to hold. An explicit but not sharp condition is the following:

\begin{lemma}[size condition] \label{Lemma size condition air vorticity}
For
\begin{align} \label{size condition air vorticity}
	\sigma > \frac{2\lambda_0 (p_1 - p_0)}{3} + \frac{2}{p_1^2} \int_{p_1}^0 \left( \GammaRel^3 + p^2 \GammaRel \right) \,\mathrm{d}p,
\end{align}
where $\lambda_0$ is defined as in (\ref{lambda_0 formula}), (\ref{LBC air vorticity}) holds.
\end{lemma}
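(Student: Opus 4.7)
The strategy is to exhibit a single admissible test function $\varphi$ and a value $\lambda > \lambda_0$ for which $\mathscr{R}(\varphi; \lambda) > -1$, since this implies $\nu(\lambda) > -1$ and hence \eqref{LBC air vorticity}. Because the denominator of $\mathscr{R}$ is negative on $\mathscr{A}$, the inequality $\mathscr{R}(\varphi;\lambda) > -1$ is equivalent to
\[
\int_{p_0}^0 \bigl( a^3 \varphi_p^2 + a\varphi^2 \bigr)\,\mathrm{d}p + g\jump{\rho}\,\varphi(p_1)^2 - \tfrac{\sigma}{2}\,\varphi(p_1)^2 < 0.
\]
I would test this with the simplest natural candidate, the piecewise linear tent function
\[
\varphi(p) := \begin{dcases} \frac{p-p_0}{p_1-p_0}, & p_0 \le p \le p_1, \\ \frac{p}{p_1}, & p_1 \le p \le 0, \end{dcases}
\]
which satisfies $\varphi(p_0) = \varphi(0) = 0$, $\varphi(p_1) = 1$, lies in $H^2$ on each sub-rectangle, and is continuous across $p_1$.

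The main computation is to evaluate the four integrals at $\lambda = \lambda_0$. A direct calculation, using $a = \lambda$ on $(p_0, p_1)$ and $a = \GammaRel$ on $(p_1, 0)$, gives
\[
\int_{p_0}^0 a^3 \varphi_p^2 \,\mathrm{d}p = \frac{\lambda_0^3}{p_1-p_0} + \frac{1}{p_1^2}\int_{p_1}^0 \GammaRel^3\,\mathrm{d}p, \qquad \int_{p_0}^0 a\varphi^2 \,\mathrm{d}p = \frac{\lambda_0(p_1-p_0)}{3} + \frac{1}{p_1^2}\int_{p_1}^0 p^2\GammaRel\,\mathrm{d}p.
\]
The decisive step is to invoke the identity $g\jump{\rho} = -\lambda_0^3/(p_1-p_0)$ coming from \eqref{lambda_0 formula}: the boundary term $g\jump{\rho}\varphi(p_1)^2$ exactly cancels the $\lambda_0^3/(p_1-p_0)$ contribution from the water region. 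After this cancellation the required inequality collapses to
\[
\sigma > \frac{2\lambda_0(p_1-p_0)}{3} + \frac{2}{p_1^2}\int_{p_1}^0 \bigl( \GammaRel^3 + p^2\GammaRel \bigr)\,\mathrm{d}p,
\]
which is precisely the hypothesis \eqref{size condition air vorticity}. Note that the same hypothesis also enforces strict admissibility $\int_{p_0}^0 a\varphi^2 \,\mathrm{d}p - \tfrac{\sigma}{2}\varphi(p_1)^2 < 0$ at $\lambda = \lambda_0$, because the extra $\GammaRel^3$ term on the right-hand side is strictly positive.

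Finally, the lemma requires $\lambda > \lambda_0$, not $\lambda = \lambda_0$. This is handled by a continuity argument: since $a$ depends on $\lambda$ only through its constant value on $(p_0,p_1)$, both the numerator and denominator of $\mathscr{R}(\varphi; \cdot)$ are polynomial, hence continuous, in $\lambda$. The two strict inequalities established at $\lambda_0$ therefore persist on a right neighborhood, so $\varphi \in \mathscr{A}$ and $\mathscr{R}(\varphi; \lambda) > -1$ for all $\lambda$ slightly greater than $\lambda_0$, giving \eqref{LBC air vorticity}. I do not anticipate any serious obstacle; the proof is essentially a bookkeeping exercise organized around one fortunate cancellation, and the choice of tent function is what makes the algebra line up with \eqref{size condition air vorticity} exactly. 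The only point requiring a bit of care is arranging that admissibility and $\mathscr{R} > -1$ hold simultaneously, but both are immediate consequences of the single strict hypothesis.
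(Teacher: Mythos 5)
Your proposal is correct and follows essentially the same route as the paper's proof: the same piecewise-linear tent function, the same evaluation of the four integrals, the cancellation $g\jump{\rho} = -\lambda_0^3/(p_1-p_0)$ from \eqref{lambda_0 formula}, and a perturbation in $\lambda$ near $\lambda_0$ to get both admissibility and $\mathscr{R}(\varphi;\lambda) > -1$ simultaneously. No gaps.
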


\begin{proof}
Let
\[
	\varphi(p) := \begin{dcases}
		\frac{p}{p_1}, & p_1 < p < 0, \\
		\frac{p - p_0}{p_1 - p_0}, & p_0 < p < p_1.
	\end{dcases}
\]
We first check if $\varphi$ is in the admissible set $\mathscr{A}$. We see that
\begin{align*}
	\int_{p_0}^0 a\varphi^2 \,\mathrm{d}p - \frac{\sigma}{2} \varphi(p_1)^2 & = \int_{p_0}^{p_1} \lambda \left( \frac{p-p_0}{p_1-p_0} \right)^2 \,\mathrm{d}p + \int_{p_1}^0 \GammaRel \left( \frac{p}{p_1} \right)^2 \,\mathrm{d}p - \frac{\sigma}{2} \\
	&= \frac{\lambda(p_1 - p_0)}{3} + \frac{1}{p_1^2}\int_{p_1}^0 p^2 \GammaRel \,\mathrm{d}p - \frac{\sigma}{2},
\end{align*}
but from the hypothesis \eqref{size condition air vorticity},
\[
	\frac{\lambda_0(p_1 - p_0)}{3} + \frac{1}{p_1^2}\int_{p_1}^0 p^2 \GammaRel \,\mathrm{d}p - \frac{\sigma}{2} < 0.
\]
Thus, for $|\lambda - \lambda_0|$ small, $\varphi \in \mathscr{A}$. With this particular $\varphi$, we then compute
\begin{align*}
	\mathscr{R}(\varphi; \lambda) &= \frac{\dsp \int_{p_0}^{p_1} \frac{\lambda^3}{(p_1 - p_0)^2} \,\mathrm{d}p + \int_{p_1}^0 \frac{\GammaRel^3}{p_1^2} \,\mathrm{d}p + g\jump{\rho}}{\dsp \int_{p_0}^{p_1} \lambda \left( \frac{p - p_0}{p_1 - p_0} \right)^2 \,\mathrm{d}p + \int_{p_1}^0 \GammaRel \left( \frac{p}{p_1} \right)^2 \,\mathrm{d}p - \frac{\sigma}{2}} \\
	&= \frac{\dsp \frac{\lambda^3}{p_1-p_0} + \frac{1}{p_1^2}\int_{p_1}^0 \GammaRel^3 \,\mathrm{d}p + g\jump{\rho}}{\dsp \frac{\lambda(p_1-p_0)}{3} + \frac{1}{p_1^2} \int_{p_1}^0 p^2 \GammaRel \,\mathrm{d}p - \frac{\sigma}{2}}.
\end{align*}
Rewriting the hypothesis \eqref{size condition air vorticity} gives
\[
	\frac{\sigma}{2} > \frac{\lambda_0^3}{p_1 - p_0} + \frac{\lambda_0 (p_1 - p_0)}{3} + \frac{1}{p_1^2} \int_{p_1}^0 \left( \GammaRel^3 + p^2 \GammaRel \right) \,\mathrm{d}p + g\jump{\rho}.
\]
Then for $|\lambda - \lambda_0|$ small, we have
\[
	\frac{\sigma}{2} > \frac{\lambda^3}{p_1 - p_0} + \frac{\lambda (p_1 - p_0)}{3} + \frac{1}{p_1^2} \int_{p_1}^0 \left( \GammaRel^3 + p^2 \GammaRel \right) \,\mathrm{d}p + g\jump{\rho}.
\]
Recalling that $\varphi$ satisfies inequality \eqref{negative semidefinite}, the above estimate implies that $\mathscr{R}(\varphi; \lambda) > -1$, so \eqref{LBC air vorticity} holds.
\end{proof}
We note that the above proof can be further refined following arguments of \cite[Theorem 4]{Constantin_Strauss2011} to find a smaller lower bound on $\sigma$ guaranteeing \eqref{LBC air vorticity} than that in \eqref{size condition air vorticity}.

\begin{lemma}[monotonicity of $\nu$] \label{Lemma monotonicity of lambda air vorticity}
If $\nu(\lambda) < 0$, then $\nu(\lambda)$ is decreasing in $\lambda$.
\end{lemma}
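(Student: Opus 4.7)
The plan is to exploit the explicit $\lambda$-dependence of the Rayleigh quotient: since $a(p;\lambda) = \lambda$ on $(p_0,p_1)$ while $a = \GammaRel(p)$ on $(p_1,0)$ is independent of $\lambda$, we can write $\mathscr{R}(\varphi;\lambda) = N(\varphi;\lambda)/D(\varphi;\lambda)$ with
\begin{align*}
    N(\varphi;\lambda) &= \lambda^3 \int_{p_0}^{p_1} \varphi_p^2\,\mathrm{d}p + \int_{p_1}^0 \GammaRel^3 \varphi_p^2\,\mathrm{d}p + g\jump{\rho}\varphi(p_1)^2, \\
    D(\varphi;\lambda) &= \lambda \int_{p_0}^{p_1} \varphi^2\,\mathrm{d}p + \int_{p_1}^0 \GammaRel \varphi^2\,\mathrm{d}p - \tfrac{\sigma}{2}\varphi(p_1)^2.
\end{align*}
For $\varphi \in \mathscr{A}$, the admissibility condition $D < 0$ combined with $\varphi(p_0)=0$ forbids $\varphi \equiv 0$ on $(p_0,p_1)$ (otherwise $\varphi(p_1)=0$ and $D = \int_{p_1}^0 \GammaRel \varphi^2 \ge 0$), so $\partial_\lambda N = 3\lambda^2 \int_{p_0}^{p_1} \varphi_p^2\,\mathrm{d}p > 0$ and $\partial_\lambda D = \int_{p_0}^{p_1} \varphi^2\,\mathrm{d}p > 0$. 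By the quotient rule,
\begin{equation*}
    \partial_\lambda \mathscr{R}(\varphi;\lambda) = \frac{D\,\partial_\lambda N - N\,\partial_\lambda D}{D^2};
\end{equation*}
whenever $\mathscr{R}(\varphi;\lambda) < 0$ we have $D < 0$ and hence $N > 0$, so both terms in the numerator are negative, giving $\partial_\lambda \mathscr{R}(\varphi;\lambda) < 0$ at each such point.

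To promote this to monotonicity of $\nu$, I would fix $\lambda_1 < \lambda_2$ in the set $\{\nu < 0\}$ and pick a maximizing sequence $\{\varphi_k\} \subset \mathscr{A}(\lambda_2)$ with $\mathscr{R}(\varphi_k;\lambda_2) \to \nu(\lambda_2)$ and, discarding early terms, $\mathscr{R}(\varphi_k;\lambda_2) < 0$. Since $\partial_\lambda D > 0$ and $D(\varphi_k;\lambda_2) < 0$, each $\varphi_k$ also lies in $\mathscr{A}(\lambda_1)$, and the hypothesis $\nu(\lambda_1) < 0$ gives $\mathscr{R}(\varphi_k;\lambda_1) \le \nu(\lambda_1) < 0$. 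Because $N(\varphi_k;\cdot)$ is a strictly increasing cubic and $D(\varphi_k;\cdot)$ a strictly increasing affine function of $\lambda$, the sets $\{N(\varphi_k;\cdot)>0\}$ and $\{D(\varphi_k;\cdot)<0\}$ are each half-lines in $\mathbb{R}$, and their intersection---the set on which $r_k(\lambda) := \mathscr{R}(\varphi_k;\lambda)$ is defined and negative---is itself an interval. Since both $\lambda_1$ and $\lambda_2$ lie in this interval, so does $[\lambda_1,\lambda_2]$, and integrating the pointwise bound $\partial_\lambda r_k < 0$ yields $\mathscr{R}(\varphi_k;\lambda_1) > \mathscr{R}(\varphi_k;\lambda_2)$; letting $k \to \infty$ gives $\nu(\lambda_1) \ge \nu(\lambda_2)$. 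To obtain the strict inequality implicit in ``decreasing,'' I would appeal to Proposition \ref{Proposition unique eigenvalue for operator K}: for $\lambda_2 > \lambda_0$ the supremum $\nu(\lambda_2)$ is attained at the unique negative eigenfunction $\varphi^*$ of $K$ at parameter $\lambda_2$, and the pointwise comparison applied to that single function produces $\nu(\lambda_1) \ge \mathscr{R}(\varphi^*;\lambda_1) > \mathscr{R}(\varphi^*;\lambda_2) = \nu(\lambda_2)$.

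The main obstacle I anticipate is precisely verifying that $r_k(\lambda) < 0$ persists throughout $[\lambda_1,\lambda_2]$ and not merely at the endpoints; without it the pointwise derivative inequality $\partial_\lambda r_k < 0$ cannot be integrated across the interval, and one only learns that the two endpoints lie in (possibly different) connected components of $\{\mathscr{R}(\varphi_k;\cdot)<0\}$. The structural observation that both $N$ and $D$ are monotone polynomials in $\lambda$, so that the negativity set is a single interval, is what rescues the argument.
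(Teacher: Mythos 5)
Your proof is correct, and it takes a genuinely different route from the paper's. The paper proceeds by perturbation theory: it differentiates the eigenvalue problem \eqref{P_mu eigenvalue problem} in $\lambda$, multiplies the original and differentiated equations cross-wise by $\dot\varphi$ and $\varphi$, integrates by parts, and extracts the Green's identity
\[
	\int_{p_0}^{p_1} \bigl( 3a^2\varphi_p^2 - \nu \varphi^2 \bigr) \,\mathrm{d}p = \Bigl( \int_{p_0}^0 a\varphi^2 \,\mathrm{d}p - \tfrac{\sigma}{2} \varphi(p_1)^2 \Bigr) \dot{\nu},
\]
from which $\dot\nu<0$ follows since the left side is positive when $\nu<0$ and the factor multiplying $\dot\nu$ is negative by admissibility. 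That argument tacitly assumes the maximizer exists and that both $\nu$ and the eigenfunction branch are differentiable in $\lambda$. You instead exploit the explicit polynomial dependence of $N$ and $D$ on $\lambda$ (via $a=\lambda$ on $(p_0,p_1)$), show $\partial_\lambda\mathscr{R}(\varphi;\cdot)<0$ on the interval where a fixed admissible $\varphi$ keeps $\mathscr{R}<0$, and then compare suprema through a maximizing sequence; the observation that $\{N>0\}\cap\{D<0\}$ is a single interval containing $[\lambda_1,\lambda_2]$ is exactly the right way to close the gap you flag at the end. Your route is more elementary and dispenses with differentiability of the eigenvalue branch, obtaining weak monotonicity with no attainment hypothesis at all; the one place where you and the paper stand on the same unproven ground is the strictness step, where you both invoke attainment of $\nu(\lambda)$ at an eigenfunction of $K$ — plausible in the $\pi_1$-space framework of Proposition \ref{Proposition unique eigenvalue for operator K} but not actually established anywhere in the paper. (One cosmetic point: ruling out $\varphi\equiv 0$ on $(p_0,p_1)$ gives $\partial_\lambda D>0$ directly, but for $\partial_\lambda N>0$ you should add the half-sentence that $\varphi_p\equiv 0$ on $(p_0,p_1)$ together with $\varphi(p_0)=0$ would force $\varphi\equiv 0$ there, which you have just excluded.)
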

\begin{proof}
Denoting derivatives with respect to $\lambda$ by a dot, differentiating the eigenvalue problem (\ref{P_mu eigenvalue problem}) with $u = \varphi \in \mathscr{A}$ gives
\begin{equation} \label{P mu dot}
\begin{dcases}
	-(3a^2 \dot{a} \varphi_p)_p - (a^3 \dot{\varphi}_p)_p = \dot{\nu}a\varphi + \nu \dot{a}\varphi + \nu a \dot{\varphi} & \text{in } (p_0, p_1) \cup (p_1, 0), \\
	2\jump{3a^2 \dot{a} \varphi_p + a^3 \dot{\varphi}_p} - 2g\jump{\rho}\dot{\varphi} = \sigma \dot{\nu} \varphi + \sigma \nu \dot{\varphi} & \text{on } p = p_1, \\
	\dot{\varphi} = 0 & \text{on } p = p_0, \\
	\dot{\varphi} = 0 & \text{on } p = 0.
\end{dcases}
\tag{$\dot{P}_\mu$}
\end{equation}
Multiplying (\ref{P_mu eigenvalue problem}) by $\dot{\varphi}$ and integrating yields
\begin{equation} \label{temp eqn 1}
	\int_{p_0}^0 a^3 \varphi_p \dot{\varphi}_p \,\mathrm{d}p + g\jump{\rho}\varphi(p_1)\dot{\varphi}(p_1) + \frac{\sigma}{2}\nu\varphi(p_1)\dot{\varphi}(p_1) = \int_{p_0}^0 \nu a \varphi \dot{\varphi} \,\mathrm{d}p.
\end{equation}
On the other hand, multiplying (\ref{P mu dot}) by $\varphi$ and integrating gives
\begin{align} \label{temp eqn 2}
	\int_{p_0}^0 3a^2\dot{a}\varphi_p^2 \,\mathrm{d}p + g\jump{\rho}\dot{\varphi}(p_1)\varphi(p_1) &+ \frac{\sigma}{2}\dot{\nu}\varphi(p_1)^2 + \frac{\sigma}{2}\nu\dot{\varphi}(p_1)\varphi(p_1) \\
	&+ \int_{p_0}^0 a^3 \dot{\varphi}_p \varphi_p \,\mathrm{d}p = \int_{p_0}^0 \left( \dot{\nu}a\varphi^2 + \nu\dot{a}\varphi^2 + \nu a \dot{\varphi}\varphi \right) \,\mathrm{d}p. \nonumber
\end{align}
Subtracting \eqref{temp eqn 1} from \eqref{temp eqn 2}, we have the following Green's identity
\[
	\int_{p_0}^0 3a^2\dot{a}\varphi_p^2 \,\mathrm{d}p + \frac{\sigma}{2}\dot{\nu}\varphi(p_1)^2 = \int_{p_0}^0 \left( \dot{\nu}a\varphi^2 + \nu\dot{a}\varphi^2 \right) \,\mathrm{d}p.
\]
Since $\dot{a} = \mathbbm{1}_{(p_0, p_1)}$, we can simplify this to find
\[
	\int_{p_0}^{p_1} 3a^2\varphi_p^2 \,\mathrm{d}p - \int_{p_0}^{p_1} \nu \varphi^2 \,\mathrm{d}p = \left( \int_{p_0}^0 a\varphi^2 \,\mathrm{d}p - \frac{\sigma}{2} \varphi(p_1)^2 \right) \dot{\nu}.
\]
Therefore, since $\nu < 0$ by assumption and the quantity in parenthesis is negative, we must have $\dot{\nu} < 0$.
\end{proof}

\begin{lemma} \label{existence maximizer air vorticity}
Suppose that the \eqref{LBC air vorticity} holds. Then there exists a unique value $\lambda^* > 0$ such that $\nu(\lambda^*) = -1$. Equivalently, there exists a unique value of $\lambda$ for which there is a nontrivial solution to the linearized problem (\ref{linearized equation air vorticity}) with the ansatz $m(q,p) = M(p)\cos(q)$. Moreover, $Q$ is an invertible function of $\lambda$ in a neighborhood of $\lambda^*$.
\end{lemma}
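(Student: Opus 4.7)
\textbf{Proof plan for Lemma \ref{existence maximizer air vorticity}.} The strategy is to produce $\lambda^*$ via the intermediate value theorem applied to $\lambda \mapsto \nu(\lambda)$, deduce uniqueness from the monotonicity in Lemma \ref{Lemma monotonicity of lambda air vorticity}, and then differentiate \eqref{Q(lambda) laminar solution air vorticity} directly to invoke the inverse function theorem. First I would establish the two-sided bracketing: taking $n=1$ in Lemma \ref{Lemma Rayleigh quotient < -n^2} gives $\nu(\lambda) < -1$ for all sufficiently large $\lambda$, while \eqref{LBC air vorticity} furnishes some $\tilde\lambda > \lambda_0$ with $\nu(\tilde\lambda) > -1$.

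The central technical step is continuity of $\lambda \mapsto \nu(\lambda)$ on $(\lambda_0, \infty)$. My plan is to identify $\nu(\lambda)$ with the unique negative eigenvalue $\mu_-(\lambda)$ of the operator $K = K(\lambda)$ from Proposition \ref{Proposition unique eigenvalue for operator K}. For $\tilde\varphi = (\varphi, \sigma\varphi(p_1)) \in D(K)$ with $\varphi \in \mathscr{A}$, one has $\mathscr{R}(\varphi;\lambda) = [K\tilde\varphi, \tilde\varphi]/[\tilde\varphi, \tilde\varphi]$ by \eqref{K symmetric} and the definition of $[\cdot, \cdot]$. Decomposing $\tilde\varphi = c\tilde u_- + \tilde w$ along the one-dimensional negative eigenspace of $K$ and its $[\cdot, \cdot]$-orthogonal complement (on which both $K$ and $[\cdot, \cdot]$ are positive since $\lambda > \lambda_0$), a short computation gives $\mathscr{R}(\varphi;\lambda) \le \mu_-(\lambda)$, with equality realized by the negative eigenvector itself. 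Hence $\nu = \mu_-$. Because $K(\lambda)$ depends smoothly on $\lambda$ through $a$, and $\mu_-$ is isolated and algebraically simple, perturbation theory for self-adjoint operators in the Pontryagin-space setting yields continuity (indeed smoothness) of $\mu_-$ in $\lambda$. The intermediate value theorem then delivers $\lambda^* \in (\tilde\lambda, \infty)$ with $\nu(\lambda^*) = -1$.

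For uniqueness, the set $\{\lambda > \lambda_0 : \nu(\lambda) < 0\}$ is open by continuity, and any point in it propagates rightward: Lemma \ref{Lemma monotonicity of lambda air vorticity} shows $\nu$ is strictly decreasing wherever it is negative, so a standard continuation argument forces the set to be of the form $(\lambda_\star, \infty)$ with $\nu$ strictly decreasing on it. Consequently, $\nu^{-1}(\{-1\})$ is a singleton. Finally, differentiating \eqref{Q(lambda) laminar solution air vorticity} gives
\[
Q'(\lambda) = -\frac{2g\jump{\rho}(p_1-p_0)}{\lambda^2} - 2\lambda,
\]
which vanishes precisely at $\lambda = \lambda_0$ by the definition \eqref{lambda_0 formula}. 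Since $\lambda^* > \lambda_0$ by construction of $\nu$, we have $Q'(\lambda^*) \neq 0$ and the inverse function theorem provides a local $C^1$-inverse of $Q$ near $\lambda^*$.

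The main obstacle is establishing continuity of $\nu$: the admissible set $\mathscr{A}$ itself varies with $\lambda$ through the indefiniteness condition defining it, which makes direct comparison of Rayleigh quotients awkward. The spectral reformulation $\nu = \mu_-$, leveraging the Pontryagin-space framework already in place, is the cleanest route around this issue; the remaining steps are essentially routine.
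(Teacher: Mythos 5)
Your proposal is correct and follows essentially the same route as the paper: bracketing $\nu$ between the large-$\lambda$ bound of Lemma \ref{Lemma Rayleigh quotient < -n^2} (with $n=1$) and \eqref{LBC air vorticity}, applying the intermediate value theorem, invoking the monotonicity of Lemma \ref{Lemma monotonicity of lambda air vorticity} for uniqueness, and using $\lambda^* > \lambda_0$ together with the critical-point structure of $Q$ for local invertibility. The only difference is that you supply a justification for the continuity of $\nu$ (via the identification of $\nu(\lambda)$ with the unique negative eigenvalue of $K(\lambda)$ and perturbation of that simple isolated eigenvalue), a step the paper simply asserts with ``by continuity''; this is a welcome filling-in rather than a deviation.
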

\begin{proof}
From Lemma \ref{Lemma Rayleigh quotient < -n^2}, we have $\nu(\lambda) < -1$ for $\lambda$ sufficiently large, and $\nu(\lambda) > -1$ for some $\lambda$ by \eqref{LBC air vorticity}. By continuity, there exists $\lambda^*$ such that $\nu(\lambda^*) = -1$. Moreover, Lemma \ref{Lemma monotonicity of lambda air vorticity} tells us that $\nu$ is a decreasing function when $\nu < 0$, so $\lambda^*$ is unique.

Next, as noted at the end of Lemma \ref{Lemma laminar solution air vorticity}, $Q$ is a concave function of $\lambda$ according to (\ref{Q(lambda) laminar solution air vorticity}), so we only need to show that $\lambda^* \ne \lambda_0$, where $\lambda_0$ is defined in (\ref{lambda_0 formula}) to be the critical point of $Q$. But $\lambda^* > \lambda$ by \eqref{LBC air vorticity}.
\end{proof}

\subsection{Proof of local bifurcation} \label{Section proof of local bifurcation}
We are now prepared to prove Theorem \ref{Theorem local bifurcation air vorticity surface tension}. As stated above, our approach is based on the classical theory of Crandall--Rabinowitz on local bifurcation from simple (generalized) eigenvalues. Specifically, we will treat the family of laminar flows as our trivial solutions. Suppose the solution to the height equation (\ref{height equation air vorticity}) can be decomposed as $h(q,p) = H(p; \lambda) + m(q,p)$ and $Q = Q(\lambda)$. Then substituting it into the equation gives
\[
	\mathcal{F}(\lambda, m) = 0,
\]
where $\mathcal{F} = \left( \mathcal{F}_1, \mathcal{F}_2, \mathcal{F}_3, \mathcal{F}_4 \right) : \Lambda \times \mathcal{O} \to Y$ with $\Lambda \subset \mathbb{R}$ to be a neighborhood of $\lambda^*$, and
\[
	\mathcal{O} := \left\{ m \in X: \inf(m_p + H_p) > 0 \text{ in } \overline D \text{ for all } \lambda \in \Lambda \right\},
\]
\begin{align} \label {mathcal F defn air vorticity}
	\hspace{.4in} \mathcal{F}_1(\lambda, m) &:= (1 + (m^{(1)}_q)^2)(m_{pp}^{(1)} + H_{pp}) + m_{qq}^{(1)}(m_p^{(1)} + H_p)^2 \nonumber \\ & \hspace{.3in} - 2 m_q^{(1)}(H_p + m_p^{(1)}) m_{pq}^{(1)} + \gamma(-p) (H_p + m^{(1)}_p)^3, \nonumber \\
	\mathcal{F}_2(\lambda, m) &:= (1 + (m^{(2)}_q)^2)(m_{pp}^{(2)} + H_{pp}) + m_{qq}^{(2)}(m_p^{(2)} + H_p)^2 \nonumber \\ & \hspace{.3in} - 2 m_q^{(2)}(H_p + m_p^{(2)}) m_{pq}^{(2)}, \\
	\mathcal{F}_3(\lambda, m) &:= -\jump{\frac{1 + m_q^2}{(H_p + m_p)^2}} - 2g\jump{\rho}(m + H) + Q - \frac{\sigma m_{qq}}{(1 + m_q^2)^{3/2}}, \nonumber \\
	\mathcal{F}_4(\lambda, m) &:= \Big( m + H -\ell - d(m) - d(H) \Big)\Big\vert_T. \nonumber
\end{align}
The Banach spaces $X$ and $Y = Y_1 \times Y_2 \times Y_3 \times Y_4$ are defined by
\[
	X := \left\{ h \in C^{2,\alpha}(\overline{D_1}) \cap C^{2,\alpha}(\overline{D_2}) \cap C^{0, \alpha}_{\text{per}}(\overline{D}): h(p_0) = 0 \right\},
\]
\[
	Y_1 := C^{0, \alpha}_{\text{per}}(\overline{D_1}), \quad Y_2 := C^{0,\alpha}_{\text{per}}(\overline{D_2}), \quad 
	Y_3 := C^\alpha_{\text{per}}(I), \quad Y_4 := C^{2,\alpha}_{\text{per}}(T).
\]
It is clear that $\mathcal{F}(\lambda, 0) = 0$ for all $\lambda > 0$. Let us record the Fr\'echet derivative of $\mathcal{F}$ with respect to $m$ at $(\lambda^*, 0)$.
\begin{align*}
	\mathcal{F}_{1m}(\lambda^*,0)\varphi &= \left( \partial_p^2 + H_p^2 \partial_q^2 + 3\gamma H_p^2\partial_p \right) \varphi^{(1)}, \\
	\mathcal{F}_{2m}(\lambda^*,0)\varphi &= \left( \partial_p^2 + H_p^2 \partial_q^2 \right) \varphi^{(2)}, \\
	\mathcal{F}_{3m}(\lambda^*,0)\varphi &= 2\jump{H_p^{-3}\varphi_p} - 2g\jump{\rho}\varphi - \sigma\varphi_{qq}, \\
	\mathcal{F}_{4m}(\lambda^*,0)\varphi &= \Big(\varphi-d(\varphi)\Big)\Big\vert_T.
\end{align*}
Note that in $D_1$, from \eqref{laminar equation air vorticity}, we have $\gamma = -H_{pp} / H_p^3$, so we can write
\begin{align*}
	\mathcal{F}_{1m}(\lambda^*, 0)\varphi &= \varphi_{pp} + a^{-2}\varphi_{qq} + 3H_p \partial_p\left( H_p^{-1} \right)\varphi_p \\
	&= \varphi_{pp} + a^{-2}\varphi_{qq} + a^{-3} 3a^2 (\partial_p a) \varphi_p \\
	&= a^{-3} \left( a^3 \varphi_p \right)_p + a^{-2}\varphi_{qq},
\end{align*}
which is the same quantity as in $D_2$.  Thus, the first expression can be written as
\[
	\mathcal{F}_{im}(\lambda^*, 0)\varphi = a^{-3} \partial_p\left( a^3 \partial_p \varphi^{(i)} \right) + a^{-2} \partial_q^2 \varphi^{(i)} \quad \text{for } i = 1, 2.
\]
\begin{lemma}[null space] \label{Lemma null space air vorticity}
The null space of $\mathcal{F}_m(\lambda^*,0)$ is one-dimensional and spanned by $\varphi^*(q,p) := \varphi_1(p)\cos(q)$.
\end{lemma}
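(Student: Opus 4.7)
The plan is to exploit the fact that $\mathcal{F}_m(\lambda^*,0)$ has coefficients depending only on $p$, so the linear equation decouples across the Fourier modes of $\varphi$. Since any element of $X$ is $2\pi$-periodic and even in $q$, we may expand
\[
    \varphi(q,p) = \sum_{n=0}^\infty M_n(p) \cos(nq),
\]
with each $M_n$ of class $C^{2,\alpha}([p_0,p_1]) \cap C^{2,\alpha}([p_1,0]) \cap C^0([p_0,0])$. Substituting into $\mathcal{F}_m(\lambda^*,0)\varphi = 0$, using $H_p^{-3} = a^3$ together with the unified formula $\mathcal{F}_{1m} = \mathcal{F}_{2m} = a^{-3}\partial_p(a^3\partial_p\cdot) + a^{-2}\partial_q^2\cdot$ derived just before the lemma, and projecting mode-by-mode shows that for each $n \ge 1$ the coefficient $M_n$ satisfies precisely the eigenvalue problem \eqref{P_mu eigenvalue problem} with $\mu = -n^2$, while $M_0$ satisfies the zero-mode problem analyzed in the paragraph preceding \eqref{Pn air vorticity}.

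Next I would eliminate the modes $n = 0$ and $n \ge 2$. The zero-mode analysis already carried out in the paper shows that a nontrivial $M_0$ forces $\lambda = \lambda_0$; since Lemma \ref{existence maximizer air vorticity} yields $\lambda^* > \lambda_0$, we conclude $M_0 \equiv 0$. For $n \ge 2$, a nontrivial $M_n$ would make $-n^2$ a negative eigenvalue of the operator $K$ at $\lambda = \lambda^*$. However, by Proposition \ref{Proposition unique eigenvalue for operator K} the positive operator $K$ admits a \emph{unique} negative eigenvalue, and this eigenvalue must coincide with $\nu(\lambda^*) = -1$: critical values of $\mathscr{R}(\cdot;\lambda^*)$ on $\mathscr{A}$ are negative eigenvalues of $K$, and conversely the unique negative eigenvalue is realized by maximizing $\mathscr{R}$ over the negative-definite cone. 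Since $-n^2 \ne -1$ for $n \ge 2$, every such $M_n$ vanishes.

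Finally, the $n = 1$ mode is governed by \eqref{P_mu eigenvalue problem} with $\mu = -1$. Because $-1$ is a simple eigenvalue of $K$ (Proposition \ref{Proposition unique eigenvalue for operator K}), its eigenspace is one-dimensional, spanned by a single function $\varphi_1$, so $\ker \mathcal{F}_m(\lambda^*,0) = \operatorname{span}\{\varphi_1(p)\cos q\}$. The main delicate point will be the rigorous identification of $-1$ with the unique negative eigenvalue of $K$: one must verify that the supremum defining $\nu(\lambda^*)$ is attained on $\mathscr{A}$, so that it produces a genuine eigenvector via the Euler--Lagrange equations rather than merely an approximate one, and that no additional negative eigenvalue is hidden, as this would contradict the one-dimensionality of the maximal invariant negative semidefinite subspace asserted in Proposition \ref{Proposition unique eigenvalue for operator K}.
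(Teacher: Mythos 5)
Your proposal is correct and follows essentially the same route as the paper: expand the null-space element in a cosine series, observe that each mode must solve the mode-$n$ eigenvalue problem \eqref{Pn air vorticity}, and invoke Proposition \ref{Proposition unique eigenvalue for operator K} together with $\lambda^* > \lambda_0$ and $\nu(\lambda^*)=-1$ to kill all modes except the simple $n=1$ mode. The paper compresses the mode-by-mode elimination into a single sentence, whereas you spell it out and correctly flag the attainment of the supremum defining $\nu(\lambda^*)$ as the one point needing care (which the paper handles implicitly via Lemma \ref{existence maximizer air vorticity}).
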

\begin{proof}
Let $\varphi$ be in the null space of $\mathcal{F}_m(\lambda^*,0))$. Since $\varphi$ is even and $C^{0, \alpha}$, we can express it via a cosine series
\[
	\varphi(q,p) = \sum_{n=0}^\infty \varphi_n(p)\cos(nq).
\]
Clearly, $\mathcal{F}_m(\lambda^*,0)\left(\varphi_n(p)\cos(nq)\right) = 0$ for every $n \ge 0$ meaning that $\varphi_n$ must solve (\ref{Pn air vorticity}) for $n$. Since $\lambda^* > \lambda_0$, we can apply Proposition \ref{Proposition unique eigenvalue for operator K} to conclude that the null space of $\mathcal{F}_m(\lambda^*,0)$ is one-dimensional. In particular, it is generated by $\varphi^*(q,p):=\varphi_1(p)\cos(q)$, where $\varphi_1$ is the unique solution to equation (\ref{Pn air vorticity}) for $n=1$.
\end{proof}

Our next lemma characterizes the range of $\mathcal{F}_m(\lambda^*, 0)$.  

\begin{lemma}[range] \label{Lemma range air vorticity}
$\mathcal{A} = (\mathcal{A}_1, \mathcal{A}_2, \mathcal{A}_3, \mathcal{A}_4) \in Y$ is in the range of $\mathcal{F}_m(\lambda^*, 0)$ if and only if it satisfies the following orthogonality condition:
\begin{align} \label{orthogonality condition air vorticity}
	\iint_{D_1} a^3 \mathcal{A}_1 \varphi^* \,\mathrm{d}q\,\mathrm{d}p + \iint_{D_2} a^3 \mathcal{A}_2 \varphi^* \,\mathrm{d}q\,\mathrm{d}p + \frac{1}{2}\int_I \mathcal{A}_3 \varphi^* \,\mathrm{d}q + \int_T a^3 \mathcal{A}_4 \varphi^*_p \mathrm{d}q = 0,
\end{align}
where $\varphi^*$ generates the null space of $\mathcal{F}_m(\lambda^*, 0)$.
\end{lemma}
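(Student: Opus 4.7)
The plan is to establish the lemma in two directions: necessity by a Green's identity argument on $D_1 \cup D_2$, and sufficiency by invoking the Fredholm theory of Section~\ref{elliptic section} together with Lemma~\ref{Lemma null space air vorticity}.

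For necessity, suppose $\mathcal{A} = \mathcal{F}_m(\lambda^*, 0)\varphi$ for some $\varphi \in X$. Using the divergence form $\mathcal{F}_{im}(\lambda^*, 0)\varphi = a^{-3}\partial_p(a^3\partial_p\varphi^{(i)}) + a^{-2}\partial_q^2\varphi^{(i)}$ for $i = 1, 2$, I would test $a^3\mathcal{A}_i$ against $\varphi^*$ in $L^2(D_i)$ and integrate by parts twice. The resulting bulk term on $\varphi^*$ vanishes because $\varphi^*$ lies in the null space of the interior operator, and the $q$-boundary contributions drop out by $2\pi$-periodicity. At $p = p_0$ both $\varphi$ and $\varphi^*$ vanish. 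At the lid $p = 0$, the Dirichlet condition in \eqref{Pn air vorticity} forces $\varphi_1(0) = 0$, whence $\varphi^*(\cdot, 0) \equiv 0$; combined with $\varphi|_T = d(\varphi) + \mathcal{A}_4$ and the orthogonality of the constant $d(\varphi)$ against the $\cos(q)$ factor in $\varphi^*_p$, the lid contributes exactly $-\int_T a^3 \mathcal{A}_4 \varphi^*_p\,\mathrm{d}q$. Continuity of $\varphi$ and $\varphi^*$ across $I$ reduces the interface contribution to $-\int_I \varphi^* \jump{a^3\varphi_p}\,\mathrm{d}q + \int_I \varphi \jump{a^3\varphi^*_p}\,\mathrm{d}q$. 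Substituting
\begin{align*}
    \jump{a^3\varphi_p} = \tfrac{1}{2}\mathcal{A}_3 + g\jump{\rho}\varphi + \tfrac{\sigma}{2}\varphi_{qq}
\end{align*}
and the corresponding identity for $\varphi^*$ (with $\mathcal{A}_3 = 0$), the $g\jump{\rho}$ terms cancel and the $\sigma$-weighted second-derivative terms match after one $q$-integration by parts, leaving precisely $-\tfrac{1}{2}\int_I \mathcal{A}_3 \varphi^*\,\mathrm{d}q$. Assembling all contributions yields \eqref{orthogonality condition air vorticity}.

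For sufficiency, I would appeal to Theorem~\ref{Theorem Fredholm solvability} and Remark~\ref{Remark Fredholm index 0}: $\mathcal{F}_m(\lambda^*, 0)$ is uniformly elliptic on $D_1 \cup D_2$ with piecewise $C^{0,\alpha}$ coefficients, the interface condition is of the transmission--Wentzell type covered by \eqref{B operator}, and the boundary conditions at $p = p_0$ and $p = 0$ are Dirichlet up to the finite-rank perturbation $d(\varphi)$ at the lid. Hence $\mathcal{F}_m(\lambda^*, 0) : X \to Y$ is Fredholm of index zero. Since Lemma~\ref{Lemma null space air vorticity} gives a one-dimensional null space, the range is a closed subspace of $Y$ of codimension one; by the necessity established above it sits inside the codimension-one subspace cut out by \eqref{orthogonality condition air vorticity}, so the two coincide.

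The hard part is the interface bookkeeping: the delicate cancellations of the $g\jump{\rho}$ and $\sigma$-weighted pieces between $\varphi$ and $\varphi^*$ are the PDE expression of the formal self-adjointness of the linearization with respect to the indefinite pairing $[\cdot,\cdot]$ introduced before Proposition~\ref{Proposition maximal negative definite subspace dimension 1}, the same algebraic symmetry that powers Proposition~\ref{Proposition unique eigenvalue for operator K}. Care is also needed in reducing the nonlocal lid condition $\varphi - d(\varphi) = \mathcal{A}_4$ to the standard Dirichlet framework of Section~\ref{elliptic section}; this is a rank-one perturbation that does not affect Fredholmness, but it must be tracked carefully to extract the $\int_T a^3 \mathcal{A}_4 \varphi^*_p\,\mathrm{d}q$ term with the correct sign.
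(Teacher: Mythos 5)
Your proposal is correct and follows essentially the same route as the paper: the necessity direction is the same Green's identity computation (with the same handling of the lid term via $\varphi_1(0)=0$ and $\int_T a^3\varphi^*_p\,\mathrm{d}q=0$, and the same cancellation of the $g\jump{\rho}$ and $\sigma$ terms at the interface), and the sufficiency direction is the same Fredholm-index-zero argument combined with Lemma \ref{Lemma null space air vorticity}. No substantive differences.
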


\begin{proof}
Denoting by $\mathcal{R}(\mathcal{F}_m(\lambda^*, 0))$ the range of $\mathcal{F}_m(\lambda^*, 0)$, we first suppose that $\mathcal{A} \in \mathcal{R}(\mathcal{F}_m(\lambda^*, 0))$. Let $\varphi$ be given such that $\mathcal{F}_m(\lambda^*, 0)\varphi = \mathcal{A}$. Using integration by parts and the PDE for $\varphi$ and $\varphi^*$ ($\varphi^*$ satisfies equation (\ref{Pn air vorticity}) with $n=1$), we can compute
\begin{align*}
	&\left\langle a^3\varphi^*, \mathcal{A}_1\right\rangle_{L^2(D_1)} + \left\langle a^3\varphi^*, \mathcal{A}_2\right\rangle_{L^2(D_2)} \\
	&= \iint_{D_1 \cup D_2} \left( (a^3 \varphi_p)_p + a\varphi_{qq} \right) \varphi^* \,\mathrm{d}q\,\mathrm{d}p \\
	&= -\iint_{D_1 \cup D_2} a^3 \varphi_{p} \varphi^*_p \,\mathrm{d}q\,\mathrm{d}p - \int_I \jump{a^3 \varphi_p \varphi^*} \,\mathrm{d}q + \iint_{D_1 \cup D_2} a \varphi \varphi^*_{qq} \,\mathrm{d}q\,\mathrm{d}p \\
	&= \iint_{D_1 \cup D_2} \left(a^3 \varphi^*_{pp} + a\varphi^*_{qq}\right)\varphi \,\mathrm{d}q\,\mathrm{d}p + \int_I \left(\jump{a^3\varphi^*_p\varphi} - \jump{a^3\varphi_p\varphi^*}\right) \,\mathrm{d}q - \int_T a^3\varphi\varphi^*_p \,\mathrm{d}q.
\end{align*}
Using the jump condition, the fact that $\varphi$ and $\varphi^*$ are continuous across the interface, and integration by parts, we obtain
\begin{align*}
	&\left\langle a^3\varphi^*, \mathcal{A}_1\right\rangle_{L^2(D_1)} + \left\langle a^3\varphi^*, \mathcal{A}_2\right\rangle_{L^2(D_2)} \\
	&= \int_I \left(g\jump{\rho}\varphi^* - \frac{\sigma}{2}\varphi^*\right)\varphi \,\mathrm{d}q - \int_I \left(g\jump{\rho}\varphi + \frac{\sigma}{2}\varphi_{qq} + \frac{1}{2}\mathcal{A}_3\right)\varphi^* \,\mathrm{d}q \\
	&\pushright{- \int_T a^3\mathcal{A}_4\varphi^*_p \,\mathrm{d}q - d(\varphi)\int_T a^3\varphi^*_p \,\mathrm{d}q} \\
	&= \int_I \left(-\frac{\sigma}{2}\varphi^*\varphi - \frac{\sigma}{2}\varphi^*_{qq}\varphi - \frac{1}{2}\mathcal{A}_3\varphi^*\right) \,\mathrm{d}q - \int_T a^3\mathcal{A}_4\varphi^*_p \,\mathrm{d}q.
\end{align*}
Recalling that $\varphi^* = \varphi_1(p) \cos(q)$, the last equality comes from the observation that
\[
	\int_T a^3\varphi^*_p \,\mathrm{d}q = a^3\varphi_{1p}(0) \int_0^{2\pi} \cos(q) \,\mathrm{d}q = 0.
\]
Finally, we have
\[
	\left\langle a^3\varphi^*, \mathcal{A}_1\right\rangle_{L^2(D_1)} + \left\langle a^3\varphi^*, \mathcal{A}_2\right\rangle_{L^2(D_2)} = -\frac{1}{2}\int_I \mathcal{A}_3\varphi^* \,\mathrm{d}q - \int_T a^3\mathcal{A}_4\varphi^*_p \,\mathrm{d}q,
\]
which can be rearranged to yield the identity \eqref{orthogonality condition air vorticity}.

Next, we prove the orthogonality condition \eqref{orthogonality condition air vorticity} is sufficient.  By similar arguments as in \cite[Lemma 3.6]{Wahlen_gravity2006}, we define the following inner product for each $\lambda$:
\begin{align*}
	\langle (\mathcal{U}_1, \mathcal{U}_2, \mathcal{U}_3, \mathcal{U}_4), & (\mathcal{V}_1, \mathcal{V}_2, \mathcal{V}_3, \mathcal{V}_4) \rangle_Y := \\
	&\left\langle a^3 \mathcal{U}_1, \mathcal{V}_1 \right\rangle_{L^2(D_1)} + \left\langle a^3 \mathcal{U}_2, \mathcal{V}_2 \right\rangle_{L^2(D_2)} + \frac{1}{2} \left\langle \mathcal{U}_3, \mathcal{V}_3 \right\rangle_{L^2(I)} + \left\langle a^3 \mathcal{U}_4, \mathcal{V}_4 \right\rangle_{L^2(T)}.
\end{align*}
Note that the null space $\widehat{\mathcal{N}}$ of $\mathcal{F}_m(\lambda^*, 0)$ can be identified with the subspace
\[
	\widetilde{\mathcal{N}} := \{ (\mathcal{V}_1, \mathcal{V}_2, \mathcal{V}_3, \mathcal{V}_4) \in Y: \mathcal{V}_1 = \mathcal{V}\vert_{D_1}, \; \mathcal{V}_2 = \mathcal{V}\vert_{D_2}, \; \mathcal{V}_3 = \mathcal{V}\vert_I, \; \mathcal{V}_4 = \mathcal{V}\vert_T \text{ for some } \mathcal{V} \in \widehat{\mathcal{N}} \}.  
\]
Then the necessary condition, which is shown above, implies that if $\mathcal{A}$ is in the range of $\mathcal{F}_m(\lambda^*, 0)$, then
\[
	\left\langle \left( \varphi^*|_{D_1}, \varphi^*|_{D_2}, \varphi^*|_{I}, \varphi^*|_{T} \right), \mathcal{A} \right\rangle_Y = 0,
\]
so $\mathcal{R}(\mathcal{F}_m(\lambda^*, 0)) \subset \widetilde{\mathcal{N}}^\perp$.  By Remark \ref{Remark Fredholm index 0}, $\mathcal{F}_m(\lambda^*, 0)$ has Fredholm index $0$, and hence
\[
	\mathrm{codim}\, \mathcal{R}(\mathcal{F}_m(\lambda^*, 0)) = \dim \widehat{\mathcal{N}} = \dim \widetilde{\mathcal{N}} = \mathrm{codim}\, \widetilde{\mathcal{N}}^\perp < \infty,
\]
which means that $\mathcal{R}(\mathcal{F}_m(\lambda^*, 0)) = \widetilde{\mathcal{N}}^\perp$.  This concludes the proof of the lemma.  
\end{proof}

\begin{lemma} [transversality] \label{Lemma transversality air vorticity}
The following transversality condition holds
\begin{equation} \label{transversality condition air vorticity}
	\mathcal{F}_{\lambda m} (\lambda^*, 0) \varphi^* \notin \mathcal{R}(\mathcal{F}_m(\lambda^*, 0)),
\end{equation}
where $\varphi^*$ generates the null space of $\mathcal{F}_m(\lambda^*, 0)$ solves equation \eqref{Pn air vorticity} for $n=1$.
\end{lemma}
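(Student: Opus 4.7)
The plan is to compute $\mathcal{F}_{\lambda m}(\lambda^*,0)\varphi^*$ explicitly and then check whether it satisfies the orthogonality condition characterizing the range in Lemma \ref{Lemma range air vorticity}. Since $a(p;\lambda)=\GammaRel(p)$ on $D_1$ is independent of $\lambda$, and the component $\mathcal{F}_4$ is also $\lambda$-independent, differentiation in $\lambda$ kills $\mathcal{F}_{1m}$ and $\mathcal{F}_{4m}$. On the water side, where $a=\lambda$ and $H_p^{(2)}=1/\lambda$, a short calculation yields
\[
	\mathcal{F}_{2\lambda m}(\lambda^*,0)\varphi^* = \frac{2}{(\lambda^*)^3}\,\varphi_1(p)\cos q, \qquad \mathcal{F}_{3\lambda m}(\lambda^*,0)\varphi^* = -6(\lambda^*)^2\,\varphi_{1p}(p_1^-)\cos q.
\]

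Next I will plug these four components into the orthogonality condition \eqref{orthogonality condition air vorticity}. Using $\int_0^{2\pi}\cos^2 q\,\mathrm{d}q = \pi$ together with $a\equiv\lambda^*$ on $D_2$, three of the four integrals either vanish outright or collapse to a single one-dimensional integral. The upshot is that the orthogonality integral equals
\[
	2\pi \int_{p_0}^{p_1} \varphi_1^2\,\mathrm{d}p \;-\; 3\pi (\lambda^*)^2\, \varphi_{1p}(p_1^-)\,\varphi_1(p_1).
\]
The key observation is that this quantity is, up to a multiplicative factor, precisely the Green's identity derived in the proof of Lemma \ref{Lemma monotonicity of lambda air vorticity}. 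Indeed, using the ODE $\varphi_{1pp} = (\lambda^*)^{-2}\varphi_1$ valid on $D_2$ and integrating by parts to rewrite $\int_{p_0}^{p_1}\varphi_{1p}^2\,\mathrm{d}p$ in terms of $\int_{p_0}^{p_1}\varphi_1^2\,\mathrm{d}p$ and the boundary term $\varphi_{1p}(p_1^-)\varphi_1(p_1)$, the above display is identified with
\[
	-\pi \left( \int_{p_0}^0 a\, \varphi_1^2\,\mathrm{d}p \;-\; \frac{\sigma}{2}\,\varphi_1(p_1)^2 \right)\dot{\nu}(\lambda^*).
\]

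The final step is a sign check. Because $\varphi_1$ is an eigenfunction of $K$ associated with its unique negative eigenvalue $\mu=-1$ (by Proposition \ref{Proposition unique eigenvalue for operator K}), it lies in the negative cone of the indefinite inner product on $\mathbb{H}$, which means $\varphi_1 \in \mathscr{A}$ and the parenthesised factor above is strictly negative. Moreover, since $\nu(\lambda^*)=-1<0$, Lemma \ref{Lemma monotonicity of lambda air vorticity} yields $\dot{\nu}(\lambda^*)<0$. Thus the orthogonality integral equals $-\pi$ times the product of two strictly negative numbers, so it is strictly negative, in particular nonzero. Hence $\mathcal{F}_{\lambda m}(\lambda^*,0)\varphi^*$ fails the orthogonality characterization of Lemma \ref{Lemma range air vorticity}, proving \eqref{transversality condition air vorticity}.

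The main obstacle is not the computation itself but the recognition that the resulting orthogonality integral can be matched to the Green's identity from Lemma \ref{Lemma monotonicity of lambda air vorticity}; once that link is spotted, the transversality condition is an automatic consequence of the strict monotonicity of $\nu$ at $\lambda^*$ together with the Pontryagin-space admissibility of $\varphi^*$.
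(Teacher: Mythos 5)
Your proposal is correct and follows essentially the same route as the paper: compute the four $\lambda$-derivatives (only the $\mathcal{F}_2$ and $\mathcal{F}_3$ components survive, and your formulas agree with the paper's since $\varphi^*_{qq}=-\varphi_1\cos q$), substitute into the orthogonality condition of Lemma \ref{Lemma range air vorticity}, and integrate by parts over $D_2$ using $\varphi_1=(\lambda^*)^2\varphi_{1pp}$. The only difference is in the final sign check: after the integration by parts the paper reads off $\Xi=-\pi\int_{p_0}^{p_1}\varphi_1^2\,\mathrm{d}p-3\pi(\lambda^*)^2\int_{p_0}^{p_1}\varphi_{1p}^2\,\mathrm{d}p<0$ directly as a negative sum of squares, so your further identification with $-\pi\bigl(\int_{p_0}^{0}a\varphi_1^2\,\mathrm{d}p-\tfrac{\sigma}{2}\varphi_1(p_1)^2\bigr)\dot{\nu}(\lambda^*)$ and the appeal to Lemma \ref{Lemma monotonicity of lambda air vorticity} is a valid and conceptually illuminating, but strictly unnecessary, detour.
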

\begin{proof}
Using Lemma \ref{Lemma range air vorticity}, it suffices to show that $\mathcal{A} := \mathcal{F}_{\lambda m}(\lambda^*, 0) \varphi^*$ does not satisfy the orthogonality condition \eqref{orthogonality condition air vorticity}. We must confirm that
\[
	\Xi := \iint_{D_1} a^3\mathcal{A}_1\varphi^* \,\mathrm{d}q \,\mathrm{d}p + \iint_{D_2} a^3\mathcal{A}_2\varphi^* \,\mathrm{d}q \,\mathrm{d}p + \frac{1}{2}\int_I \mathcal{A}_3\varphi^* \,\mathrm{d}q + \int_T a^3\mathcal{A}_4\varphi^*_p \,\mathrm{d}q \ne 0.
\]
Computing the derivatives gives
\begin{align*}
	\mathcal{F}_{1\lambda m}(\lambda^*, 0)\varphi^* &= 0, \\
	\mathcal{F}_{2\lambda m}(\lambda^*, 0)\varphi^* &= -\frac{2}{(\lambda^*)^3} \varphi^*_{qq}, \\
	\mathcal{F}_{3\lambda m}(\lambda^*, 0)\varphi^* &= \left.\left( -6(\lambda^*)^2 \left(\varphi^*_p\right)^{(2)} \right) \right\vert_I, \\
	\mathcal{F}_{4\lambda m}(\lambda^*, 0)\varphi^* &= 0,
\end{align*}
so we have
\[
	\iint_{D_1} a^3\mathcal{A}_1\varphi^* \,\mathrm{d}q \,\mathrm{d}p = \int_T a^3\mathcal{A}_4\varphi^*_p \,\mathrm{d}q = 0.
\]
Using \eqref{Pn air vorticity} with $n=1$ to have $\varphi^* = (\lambda^*)^2 \varphi^*_{pp}$ , we can derive
\[
	\left(\lambda^*\right)^2 \left(\varphi^*\varphi^*_p\right)_p = \left(\lambda^*\right)^2\left(\varphi^*_p\right)^2 + \left(\lambda^*\right)^2\varphi^*\varphi^*_{pp} = \left(\lambda^*\right)^2\left(\varphi^*_p\right)^2 + \left(\varphi^*\right)^2 \quad \text{in } D_2
\]
so that using integration by parts and the fact that $\varphi^*_{qq} = -\varphi^*$ yields
\begin{align*}
	\iint_{D_2} a^3\mathcal{A}_2\varphi^* \,\mathrm{d}q \,\mathrm{d}p &= \iint_{D_2} 2 (\varphi^*)^2 \,\mathrm{d}q \,\mathrm{d}p
\end{align*}
and
\begin{align*}
	\frac{1}{2}\int_I \mathcal{A}_3\varphi^* \,\mathrm{d}q &= -3\left(\lambda^*\right)^2 \int_I \left(\varphi^*_p\right)^{(2)}\varphi^* \,\mathrm{d}q \\
	&= -3\left(\lambda^*\right)^2 \iint_{D_2} \left(\varphi^*_p\right)^2 \,\mathrm{d}q \,\mathrm{d}p - 3 \iint_{D_2} \left(\varphi^*\right)^2 \,\mathrm{d}q \,\mathrm{d}p.
\end{align*}
Finally, combining all terms gives
\begin{align*}
	\Xi &= -\iint_{D_2} \left(\varphi^*\right)^2 \,\mathrm{d}q \,\mathrm{d}p - 3\left(\lambda^*\right)^2 \iint_{D_2} \left(\varphi^*_p\right)^2 \,\mathrm{d}q \,\mathrm{d}p < 0. \qedhere
\end{align*}
\end{proof}
Now we are ready to prove our main theorem.

\begin{proof}[Proof of Theorem \ref{Theorem local bifurcation air vorticity surface tension}]
Suppose conditions \eqref{Gamma_rel definition air vorticity} and \eqref{LBC air vorticity} are satisfied. Then $\mathcal{F}(\lambda, 0) = 0$ for all $\lambda > 0$ and $\mathcal{F}_m$, $\mathcal{F}_\lambda$, $\mathcal{F}_{\lambda m}$ exist and are continuous, which means parts $(i)$ and $(ii)$ are confirmed. Moreover, Lemma \ref{Lemma null space air vorticity} and Lemma \ref{Lemma range air vorticity} give dimension $1$ for the null space of $\mathcal{F}_m(\lambda^*, 0)$ and co-dimension $1$ for the range of $\mathcal{F}_m(\lambda^*, 0)$. Thus, $\mathcal{F}_m(\lambda^*, 0)$ has Fredholm index $0$, and hence part $(iii)$ is justified. Lastly, the transversality condition in Lemma \ref{Lemma transversality air vorticity} fulfills part $(iv)$. Therefore, the local bifurcation result follows directly from Theorem \ref{Theorem Crandall Bifurcation}.
\end{proof}
Finally, back to our objective, we note that the existence of a solution of class $\solnSpace$ to the height equation \eqref{height equation air vorticity} is equivalent to the existence of a solution of class $\solnSpaceEuler$ to the Euler system \eqref{Eulerian eqn 1}--\eqref{wo stagnation} (see, for example, \cite[Lemma 2.1]{Constantin_Strauss2004} or \cite[Lemma 2.1]{Walsh2009}).

\section*{Acknowledgements}
The author would like to express his sincere gratitude to \thanks{Samuel Walsh} for the continuous support and insightful comments.  The author is also grateful to \thanks{Erik Wahl\'en} for assisting with the Pontryagin space material.  Finally, the author is deeply indebted to the referee for many helpful comments and suggestions which significantly improved the paper.  

This work was supported in part by the National Science Foundation through DMS-$1514910$.


\renewcommand{\theequation}{A-\arabic{equation}}
\renewcommand{\thetheorem}{A.\arabic{theorem}}

\appendix
\setcounter{theorem}{0}
\setcounter{equation}{0}
\subsection*{Appendix}
\begin{theorem} [Crandall and Rabinowitz \cite{Crandall_Rabinowitz1971}] \label{Theorem Crandall Bifurcation}
Let $X$ and $Y$ be Banach spaces and $I \subset \mathbb{R}$ be an open interval with $\lambda^* \in I$. Suppose that $\mathcal{F}: I \times X \to Y$ is a continuous map with the following properties:
\begin{enumerate}[label=(\roman*),font=\upshape]
	\item $\mathcal{F}(\lambda,0) = 0$ for all $\lambda \in I$;
	\item $D_1\mathcal{F}$, $D_2\mathcal{F}$, and $D_1D_2\mathcal{F}$ exist and are continuous, where $D_i$ denotes the Fr\'echet derivative with respect to the i-th coordinate;
	\item $D_2\mathcal{F}(\lambda^*,0)$ is a Fredholm operator of index $0$. In particular, the null space is one-dimensional and spanned by some element $w^*$.
	\item $D_1D_2\mathcal{F}(\lambda^*,0)w^* \notin \mathcal{R}(D_2\mathcal{F}(\lambda^*,0))$.
\end{enumerate}
There there exists a continuous local bifurcation curve $\{ (\lambda(s),w(s)) \in \mathbb{R} \times X: |s| < \epsilon \}$ with $\epsilon>0$ sufficiently small such that $(\lambda(0),w(0)) = (\lambda^*,w^*)$, and
\[
	\{ (\lambda,w) \in \mathcal{U}: w \ne 0, \mathcal{F}(\lambda,w) = 0 \} = \{ (\lambda(s),w(s)) \in \mathbb{R} \times Y: |s| < \epsilon \}
\]
for some neighborhood $\mathcal{U}$ of $(\lambda^*,0)$ in $\mathbb{R} \times X$. Moreover, we have
\[
	w(s) = sw^* + o(s) \qquad \text{in } X \text{, } |s| < \epsilon.
\]
If $D^2_2$ exists and is continuous, then the curve is of class $C^1$.
\end{theorem}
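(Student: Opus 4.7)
The plan is to prove this by Lyapunov--Schmidt reduction. Because $L := D_2 \mathcal{F}(\lambda^*, 0)$ is Fredholm of index $0$ with one-dimensional kernel $\mathrm{span}(w^*)$, we may pick a closed complement $X_2$ of $\mathrm{span}(w^*)$ in $X$ and a one-dimensional complement $Y_2$ of $\mathcal{R}(L)$ in $Y$, so $X = \mathrm{span}(w^*) \oplus X_2$ and $Y = \mathcal{R}(L) \oplus Y_2$. Let $Q : Y \to Y_2$ and $I - Q : Y \to \mathcal{R}(L)$ be the continuous projections. I write any $w \in X$ near $0$ uniquely as $w = s w^* + z$ with $s \in \mathbb{R}$ and $z \in X_2$, and split $\mathcal{F}(\lambda, w) = 0$ into the two equations
\[
    (I - Q)\mathcal{F}(\lambda, s w^* + z) = 0, \qquad Q\mathcal{F}(\lambda, s w^* + z) = 0.
\]

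First I would solve the $(I-Q)$ equation for $z$. Define $G(\lambda, s, z) := (I - Q)\mathcal{F}(\lambda, s w^* + z)$. Then $G(\lambda^*, 0, 0) = 0$ by hypothesis (i), and $D_z G(\lambda^*, 0, 0) = (I - Q) L$ is a continuous bijection from $X_2$ onto $\mathcal{R}(L)$ (it is injective because $\ker L \cap X_2 = \{0\}$ and surjective by the choice of $Y_2$), hence an isomorphism by the open mapping theorem. The implicit function theorem in its $C^0$-with-$C^1$-partials form, applicable thanks to hypothesis (ii), yields a unique continuous local solution $z = z(\lambda, s)$ near $(\lambda^*, 0)$ with $z(\lambda^*, 0) = 0$. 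Differentiating $G(\lambda, s, z(\lambda, s)) \equiv 0$ at $(\lambda^*, 0)$ and using $L w^* = 0$ gives $D_s z(\lambda^*, 0) = 0$; similarly $D_\lambda z(\lambda, 0) \equiv 0$ because $\mathcal{F}(\lambda, 0) \equiv 0$ forces $z(\lambda, 0) \equiv 0$.

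Substituting into the $Q$-equation produces a scalar equation $\varphi(\lambda, s) := Q \mathcal{F}(\lambda, s w^* + z(\lambda, s)) = 0$ in $Y_2 \cong \mathbb{R}$. Since $\mathcal{F}(\lambda, 0) \equiv 0$ and $z(\lambda, 0) = 0$, I have $\varphi(\lambda, 0) \equiv 0$, so Hadamard's lemma lets me factor $\varphi(\lambda, s) = s\, \psi(\lambda, s)$ with $\psi$ continuous and, by hypothesis (ii), continuously differentiable in $\lambda$. The nontrivial branch is then cut out by $\psi(\lambda, s) = 0$. A direct computation using $D_s z(\lambda^*, 0) = 0$ gives $\psi(\lambda^*, 0) = Q L w^* = 0$, and differentiating under the parameter,
\[
    D_\lambda \psi(\lambda^*, 0) = Q D_1 D_2 \mathcal{F}(\lambda^*, 0) w^*,
\]
which is nonzero in $Y_2$ precisely by the transversality condition (iv). A second application of the implicit function theorem therefore solves $\psi(\lambda, s) = 0$ as $\lambda = \lambda(s)$ near $s = 0$ with $\lambda(0) = \lambda^*$, and setting $w(s) := s w^* + z(\lambda(s), s)$ yields the bifurcation curve. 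The asymptotics $w(s) = s w^* + o(s)$ follow from $z(\lambda^*, 0) = 0$ and $D_s z(\lambda^*, 0) = 0$, and the $C^1$ regularity of the curve under the additional assumption on $D_2^2 \mathcal{F}$ follows because $\psi$ is then $C^1$ jointly.

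The main obstacle I anticipate is the factorization $\varphi(\lambda, s) = s \psi(\lambda, s)$ and the verification that the resulting $\psi$ is regular enough in $\lambda$ to apply the implicit function theorem a second time; this requires a careful bookkeeping of the smoothness available from hypothesis (ii) (only $D_1 \mathcal{F}$, $D_2\mathcal{F}$, $D_1 D_2 \mathcal{F}$ are assumed continuous, not $D_2^2 \mathcal{F}$), which is precisely why the conclusion only asserts continuity of the curve in general and $C^1$-regularity under the stronger hypothesis.
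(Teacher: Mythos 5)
The paper itself offers no proof of this statement; it is quoted verbatim from Crandall--Rabinowitz, so I am comparing your argument against the standard proofs in the literature rather than against anything in the text. Your two-step Lyapunov--Schmidt reduction is a legitimate strategy, and everything through the first application of the implicit function theorem is sound: the solvability of the $(I-Q)$-equation for $z(\lambda,s)$, the identities $z(\lambda,0)\equiv 0$ and $D_s z(\lambda^*,0)=0$, and the computation $\psi(\lambda^*,0)=QLw^*=0$ are all correct. The gap is exactly the step you flag at the end and then do not carry out: you assert that $\psi$ is ``continuously differentiable in $\lambda$'' by hypothesis (ii), but this is not immediate. For $s\ne 0$ one has $\partial_\lambda\psi(\lambda,s)=s^{-1}\bigl(QD_1\mathcal{F}(\lambda,sw^*+z)+QD_2\mathcal{F}(\lambda,sw^*+z)\,\partial_\lambda z(\lambda,s)\bigr)$, and taming the $1/s$ requires quantitative input that (ii) yields only after further work: one must use $\mathcal{F}(\lambda,0)\equiv0$ to write $D_1\mathcal{F}(\lambda,w)=\int_0^1 D_1D_2\mathcal{F}(\lambda,tw)w\,\mathrm{d}t$, deduce $\partial_\lambda z(\lambda,s)=O(s)$ from the equation $\partial_\lambda z$ satisfies, and then verify that the resulting expression extends continuously to $s=0$ and that the extension really is the partial derivative of $\psi(\cdot,0)$. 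Without this the second implicit function theorem is not justified; note that naively differentiating $\psi(\lambda,0)=QD_2\mathcal{F}(\lambda,0)\bigl(w^*+\partial_s z(\lambda,0)\bigr)$ in $\lambda$ appears to require the mixed derivative $\partial_\lambda\partial_s z$, hence $D_2^2\mathcal{F}$, which is precisely what is \emph{not} assumed.

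This difficulty is why Crandall and Rabinowitz's own proof avoids the two-step reduction: they apply the implicit function theorem once to the blown-up map $g(s,\lambda,z):=s^{-1}\mathcal{F}(\lambda,s(w^*+z))$ for $s\ne0$, extended by $g(0,\lambda,z):=D_2\mathcal{F}(\lambda,0)(w^*+z)$, solving for $(\lambda,z)\in\mathbb{R}\times X_2$ with $s$ as a parameter. Hypothesis (ii) then gives directly that $g_\lambda$ and $g_z$ exist and are continuous (indeed $g_z(s,\lambda,z)=D_2\mathcal{F}(\lambda,s(w^*+z))\vert_{X_2}$ for every $s$), and $D_{(\lambda,z)}g(0,\lambda^*,0)(\mu,v)=\mu\, D_1D_2\mathcal{F}(\lambda^*,0)w^*+Lv$ is an isomorphism of $\mathbb{R}\times X_2$ onto $Y$ by (iii) and (iv). Your route can be completed, but you must either supply the estimates above or switch to this one-step formulation. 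Two smaller omissions: you never verify the exhaustiveness claim that every nontrivial zero of $\mathcal{F}$ near $(\lambda^*,0)$ lies on the curve (it follows from the uniqueness clauses of your two implicit function theorem applications, but it should be stated), and for the final $C^1$ assertion you need control of $\partial_s\psi$ as well as $\partial_\lambda\psi$, which is where the extra hypothesis on $D_2^2\mathcal{F}$ actually enters.
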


\bibliography{Elliptic_Wentzell_Transimission}
\bibliographystyle{plain}

\end{document}